\documentclass[pdflatex,sn-mathphys-num]{sn-jnl}
\usepackage{graphicx}
\usepackage{amsmath,amssymb,amsthm,amsfonts,bm,mathtools}
\usepackage[linesnumbered,ruled]{algorithm2e}
\usepackage{subcaption}
\usepackage{float}
\usepackage{xcolor,textcomp,array}
\usepackage{pifont,makecell,diagbox,stmaryrd}
\usepackage{booktabs,threeparttable,multirow,adjustbox}
\usepackage{hhline}

\theoremstyle{thmstyleone}
\newtheorem{assumption}{Assumption}
\newtheorem{proposition}{Proposition}
\newtheorem{definition}{Definition}
\newtheorem{Property}{Property}
\newtheorem{theorem}{Theorem}
\newtheorem{lemma}{Lemma}

\theoremstyle{thmstyletwo}

\begin{document}
\title{Generalized Geometry Block Proximal Linearized Method for Multiblock Nonconvex and Nonsmooth Optimization}

\author*{\fnm{Weifeng} \sur{Yang}}\email{ywf841673182@gmail.com}

\abstract{


This paper considers a class of multiblock nonconvex and nonsmooth optimization problems arising in many applications.  
Existing methods construct proximal linearized operators or their variants within standard Euclidean geometry to solve this class of problems, forcing their block variable updates to rely on the standard inner product and its induced norm. 
Nevertheless, this construction fails to capture the geometric structure of the target problem, leading to low numerical efficiency. 
To overcome these drawbacks, we propose a generalized geometry proximal linearized operator for updating block variables, and develop the Generalized Geometry Block Proximal Linearized (GGBPL) method based on this operator. 
Compared with existing proximal linearized operators, the proposed operator allows the block surrogate functions to be constructed using arbitrary inner products and general admissible metrics, thereby enabling the GGBPL method to adapt its updates to the geometric structure of various  problems. We also introduce the inertial version of GGBPL, named the inertial GGBPL (iGGBPL) method.  
We further establish a new unified convergence framework under this generalized geometry, within which we prove that our methods guarantee convergence of the objective function values, establish global convergence of the generated sequence to a critical point, and derive the convergence rate of our methods. We also establish an $\mathcal{O}(\varepsilon^{-2})$ iteration complexity bound for obtaining an $\varepsilon$-stationary point. 
We apply our methods to two nonconvex and nonsmooth problems: sparse nonnegative matrix factorization with $\ell_0$-constraints and sparse nonnegative CP decomposition with $\ell_0$-constraints. 
Numerical results demonstrate the superior numerical performance of our proposed methods over several state-of-the-art methods. 

}

\keywords{Multiblock optimization, Nonconvex and nonsmooth optimization, Generalized geometry, Proximal linearized methods, Global convergence, Iteration complexity}

\maketitle

\section{Introduction}
In this paper, we consider a class of nonconvex and nonsmooth optimization problems as follows. 
\begin{align}
    \min\limits_{(\left\{{x_{i}} \right\}^{N}_{i=1})}J(\left\{{x_{i}} \right\}^{N}_{i=1})=H(\left\{{x_{i}} \right\}^{N}_{i=1})+\sum_{i=1}^{N}F_{i}(x_{i}),\label{e11}
\end{align}
where $d_{i} \in \mathbb{N}, ~H: \mathbb{D}_{H}  \rightarrow\mathbb{R},~\mathbb{D}_{H} =\prod_{i=1}^N \mathbb{R}^{d_{i}}, ~F_{i}:\mathbb{D}_{F_{i}} \rightarrow \overline{\mathbb{R}},~\mathbb{D}_{F_{i}} \subseteq \mathbb{R}^{d_{i}},~ \mathrm{dom} ~ J=\mathbb D_H\cap\prod_{i=1}^N\mathbb D_{F_i}$. $F_{i}$ is a proper, lower semicontinuous (possibly nonconvex and nonsmooth) function (e.g., $\ell_0$ norm), $H$ is a continuously differentiable (possibly nonconvex) function.  
Eq. (\ref{e11}) covers many application scenarios, e.g., analysis of earthquake abnormal data \cite{zhu2021analysis,LV2026100189}, sparse PCA \cite{li2023sparse,zhao2023proximal}, 
tensor decomposition \cite{zhang2022sparse,chen2022unsupervised}, matrix completion \cite{fan2020matrix,jia2022non}, etc.

Obviously, Eq. (\ref{e11}) represents a class of nonconvex and nonsmooth multiblock optimization problems. Commonly used methods for multiblock nonconvex optimization, including proximal alternating minimization (PAM) methods \cite{attouch2010proximal,dang2025two} and some smoothing methods \cite{li2024smoothing,chen2012smoothing}, struggle to provide closed-form block updates for Eq. (\ref{e11}) and incur high computational costs. 
To overcome these issues, the Proximal Alternating Linearized Minimization (PALM) method \cite{bolte2014proximal} introduces a block proximal linearized operator based on the standard Euclidean geometry as follows. 
\begin{equation}
\begin{aligned}
    && x^{k+1}_{j} &\in \operatorname*{\arg\min}\limits_{x} \Big[F_{j}(x)+\frac{1}{2\sigma^{k}_{j}} \|x-x^{k}_{j}\|^{2}\\
    && & +\langle  x-x^{k}_{j}, \nabla_{x_{j}} H(\left\{{x^{k+1}_{i}} \right\}^{j-1}_{i=1},x_{j}^{k},\left\{{x^{k}_{i}} \right\}^{N}_{i=j+1}) \rangle \Big]. 
\end{aligned}
\label{e12}
\end{equation}

The proximal linearized operator Eq. (\ref{e12}) is constructed under the standard inner product  (Euclidean inner product), and its proximal term is defined by the standard metric induced by this standard inner product (i.e., the Euclidean norm). 
Therefore, by utilizing the favorable mathematical properties of the Euclidean norm and the standard inner product, \cite{bolte2014proximal} proposed a convergence analysis framework based on the standard Euclidean geometry for the PALM-type methods. 
Following this convergence analysis framework, many researchers have also proposed numerous improved PALM-type methods. For example, since incorporating inertial terms (extrapolation terms) is an effective technique to accelerate convergence, subsequent researchers have proposed inertial PALM methods that incorporate extrapolation terms into Eq. (\ref{e12}). By imposing coupled constraints on the parameters (e.g., extrapolation parameters, step sizes, and Lipschitz parameters) and proving the descent of a Lyapunov auxiliary function instead of the original objective function, or by employing restart and backtracking steps, the convergence analyses of these inertial PALM methods can still be carried out within this standard Euclidean geometry convergence analysis framework, 
e.g.,  \cite{pock2016inertial} proposed an inertial PALM method with two extrapolation points, \cite{wang2024stochastic} proposed a stochastic version of the Gauss–Seidel type inertial PALM method for the case $N=2$, \cite{xu2017globally} introduced an inertial PALM method with backtracking steps, \cite{yang2023accelerated} proposed an inertial PALM method that uses restart steps to ensure the independence of the extrapolation parameter, among other methods not detailed here due to space limitations \cite{yang2024proximal,jia2025fast,wang2023generalized}. 
Besides inertial PALM methods, some works also use Bregman distance regularization as the proximal term in Eq. (\ref{e12}) to extend the theoretical form of the proximal linearized operator. Similarly, in order to ensure convergence, these Bregman PALM methods construct the Bregman distance using a kernel function under the standard inner product, and require both the kernel function and its resulting Bregman distance to satisfy global strong convexity and locally Lipschitz gradient continuity with respect to the Euclidean norm, thereby allowing the convergence analyses of these Bregman PALM methods to still be carried out within the standard Euclidean geometry convergence analysis framework proposed by \cite{bolte2014proximal}, e.g., \cite{guo2023two} proposed a two-step inertial Bregman PALM method for the case $N=2$, \cite{le2020inertial} proposed an inertial Bregman PALM method, among other methods not detailed here due to space limitations \cite{ahookhosh2021multi,gao2023alternating,wang2024bregman,phan2023inertial}.



However, the above methods have to strictly confine the construction of their block updates to the standard Euclidean geometry.  
This restriction allows their convergence analyses to be carried out within this standard Euclidean geometry convergence analysis framework, but introduces geometric distortion and metric distortion when the geometries of the target problem are different from the Euclidean one. 
Moreover, there are some Bregman PALM methods (including those mentioned above) that use Bregman distance regularization as the proximal term in Eq. (\ref{e12}), but they still define the Bregman distance induced by the standard inner product and require it to be uniformly equivalent to the squared Euclidean norm, thereby also introducing geometric and metric distortion.  
Although different finite-dimensional topological spaces may be topologically equivalent, the geometric distortion and metric distortion between them can still be substantial \cite{bauschke2017descent,lu2018relatively}. 
Therefore, constructing all block updates within standard Euclidean geometry forces their proximal regularization and parameter selection to rely exclusively on the standard inner product and its induced norm, preventing these updates from capturing the local geometry, scaling, and coupling structures of the corresponding subproblems across different target problems,  thereby reducing the applicability of these methods and diminishing their numerical effectiveness.

To overcome these drawbacks, we propose a novel generalized geometry proximal linearized operator to update block variables and develop the Generalized Geometry Block Proximal Linearized (GGBPL) method based on this operator. 
The proposed operator constructs each block surrogate function using arbitrary inner products and general admissible metrics, enabling the proposed method to adapt its block updates to the local geometry, scaling, and coupling structures of the corresponding subproblems across different target problems, thereby improving the numerical efficiency and extending the applicability of the proposed GGBPL method. We further develop an inertial variant of GGBPL, termed iGGBPL. 
To provide theoretical guarantees in this generalized geometric setting, we further establish a new unified convergence framework. Within this framework, we prove that our methods guarantee convergence of the objective function values, establish global convergence of the generated sequence to a first-order critical point, and derive the convergence rate of our methods. We further establish a finite-iteration guarantee under the same generalized geometry, showing that our methods obtain an $\varepsilon$-stationary point within $\mathcal{O}(\varepsilon^{-2})$ iterations. 
Sparse nonnegative matrix factorization (SNMF) and sparse nonnegative CP decomposition (SNCP) are important tools for feature extraction \cite{li2025superpixel,chen2022unsupervised}, but their formulations with $\ell_0$ constraints are NP-hard, nonconvex and nonsmooth. Therefore, we apply our methods to solve these problems.

\subsection{Contributions}
The main contributions are summarized as follows.

(1) We propose a novel generalized geometry proximal linearized operator, thereby obtaining the Generalized Geometry Block Proximal Linearized (GGBPL) method based on this operator. 
Unlike existing proximal linearized operators strictly confined to standard Euclidean geometry, the proposed operator allows each block surrogate function to be constructed using arbitrary inner products and general admissible metrics. Consequently, the block updates can be adapted to the local geometry, scaling, and coupling structures of the corresponding subproblems across different target problems, thereby improving numerical efficiency and extending the applicability of the proposed methods. Additionally, we introduce an inertial version of GGBPL, named the inertial GGBPL (iGGBPL) method.

(2) We establish a unified convergence framework in a generalized geometric setting that accommodates arbitrary inner products and general admissible metrics.  
Within this framework, we prove that our methods guarantee convergence of the objective function values, establish global convergence of the generated sequence to a first-order critical point, and derive the corresponding convergence rate. We also establish an $\mathcal{O}(\varepsilon^{-2})$ iteration complexity bound for obtaining an $\varepsilon$-stationary point.

(3) We apply our methods to solve the SNMF with $\ell_0$-constraints and the SNCP  with $\ell_0$-constraints problems. Numerical results across all datasets and ranks demonstrate that GGBPL consistently outperforms PALM and several state-of-the-art inertial methods, while iGGBPL achieves the best overall performance by a clear margin among all compared methods.

\section{Symbol definitions and preliminaries}
\label{section: pre}
We first introduce some definitions and properties \cite{ciarlet2013linear,armstrong2013basic}. Additionally, Table \ref{notation} summarizes the notation used in this paper. 

\begin{table}[!ht] 
\centering 

\begin{tabular}{p{2.5cm}|p{7.3cm}} \hline%
Notation & Definition \\ \hline
$\left\{{x_{i}} \right\}^{n}_{i=1}$ & $\{x_{1}, x_{2},. . . . . . , x_{n}\}$ \\
$\left[ n \right]$  & $\left\{i\right\}_{i=1}^{n}$ \\

$x_{(n)}$  & $\left\{{x_{i}} \right\}^{n}_{i=1}$ \\ 
$x_{(n)}+y_{(n)}$ &	$\left\{{x_{i}}+{y_{i}}\right\}^{n}_{i=1}$ \\ 


$L_{\nabla_{x_{j}} H}$  & the Lipschitz constant of $\nabla_{x_{j}} H(\left\{{x_{i}} 
\right\}^{N}_{i=1})$\\

$x^{k}_{i}$  & the $i$-th block of $\left\{{x_{i}} \right\}^{n}_{i=1}$ within the $k$-th outer loop\\

$h_{j}(x_{j})$ &$H(\{x_{i}^{k+1}\}_{i=1}^{j-1}, x_{j}, \{x_{i}^{k}\}_{i=j+1}^{N}) $ \\

$\mathcal{C}^{1}_{L}(X)$ & the set of functions satisfying the block-wise local gradient Lipschitz continuity \\

$\mathrm{dom} ~ J$ & the domain of function $J$ \\

$\langle \cdot , \cdot  \rangle$ & standard inner product (dot product) \\

$\langle\cdot,\cdot\rangle_{i,k}$ & an arbitrary inner product defined on the $i$-th block at iteration $k$ \\

$\|\cdot\|_{i,k}$ & the norm induced by $\langle\cdot,\cdot\rangle_{i,k}$ \\


$\mathcal{D}_i$ & the family of admissible metrics for the $i$-th block, it satisfies Assumption \ref{assump2} \\

$d_i^k(\cdot,\cdot)$ & the admissible metric $d_i^k\in\mathcal{D}_i$ for the $i$-th block at iteration $k$  \\  

$L_{i,k}$ & the local majorization constant of $h_i$ under $d_i^k$ \\ \hline

\end{tabular}

\caption{Summary of frequently used notations}
\label{notation}

\end{table}

\begin{definition} 
Let $X$ be a non-empty set. A function $d:X\times X\rightarrow\mathbb{R}$ is called a metric on $X$ if, for all $x, y, z \in X$, the following conditions hold. 

(i) $d(x, y) \geq 0$, with equality if and only if $x = y$.

(ii) $d(x, y) = d(y, x)$ for all $x, y \in X$.

(iii) $d(x, y) + d(y, z) \geq d(x, z)$.

\noindent Then $(X,d)$ is a metric space, we abbreviate it as $X$. 
\label{d0}
\end{definition}

\begin{proposition}
Let $(X,d)$ be a metric space. Then the metric function $d:X\times X\rightarrow \mathbb{R}$ is continuous with respect to the topology induced by $d$. That is, if $x_k\rightarrow x$ and $y_k\rightarrow y$ in $(X,d)$, then $\lim_{k\rightarrow \infty}d(x_k,y_k)=d(x,y).$
\label{p_metric}
\end{proposition}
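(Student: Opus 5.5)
The plan is to derive the continuity of $d$ directly from the triangle inequality in Definition \ref{d0}, using the standard "quadrilateral inequality"
\begin{equation*}
|d(x_k,y_k)-d(x,y)|\le d(x_k,x)+d(y_k,y).
\end{equation*}
Once this bound is available, the hypotheses $x_k\to x$ and $y_k\to y$ in $(X,d)$ mean precisely that $d(x_k,x)\to 0$ and $d(y_k,y)\to 0$. The right-hand side therefore tends to zero, which gives $\lim_{k\to\infty}d(x_k,y_k)=d(x,y)$.

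To prove the quadrilateral inequality, apply property (iii) of Definition \ref{d0} twice:
\begin{equation*}
d(x_k,y_k)\le d(x_k,x)+d(x,y)+d(y,y_k).
\end{equation*}
Symmetry (property (ii)) lets us write $d(y,y_k)=d(y_k,y)$, so
\begin{equation*}
d(x_k,y_k)-d(x,y)\le d(x_k,x)+d(y_k,y).
\end{equation*}
Exchanging the roles of $(x_k,y_k)$ and $(x,y)$ gives
\begin{equation*}
d(x,y)\le d(x,x_k)+d(x_k,y_k)+d(y_k,y),
\end{equation*}
and hence the same bound for $d(x,y)-d(x_k,y_k)$, again using symmetry. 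The two one-sided estimates together give the absolute-value bound.

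For the topological phrasing, note that $X\times X$ carries the product topology induced by $d$. This topology is metrizable, for instance by $\rho((x,y),(x',y'))=d(x,x')+d(y,y')$. The quadrilateral inequality says exactly that $d$ is $1$-Lipschitz with respect to $\rho$, so $d$ is continuous on $X\times X$. Since the space is metrizable, sequential continuity and continuity coincide, so the sequential statement and the topological statement are equivalent.

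No step is a real obstacle. The only point that needs care is using symmetry (ii) to orient the distances correctly in both one-sided inequalities.
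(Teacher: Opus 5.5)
Your argument is correct and complete: the quadrilateral inequality $|d(x_k,y_k)-d(x,y)|\le d(x_k,x)+d(y_k,y)$, obtained from two applications of the triangle inequality together with symmetry, is the standard proof. The paper gives no proof of this proposition; it states it as a known preliminary and cites standard textbooks, so your write-up supplies the argument those references contain.
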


\begin{definition}
$B(x,\epsilon)$ is an open ball which is defined as $B(x,\epsilon):=\left\{y: d(x,y)<\epsilon\right\}$. 
\label{openball}
\end{definition}

\begin{proposition}
A bounded closed set in finite-dimensional space is a compact set. 
\label{dcompact}
\label{p4}
\end{proposition}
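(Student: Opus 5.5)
The statement to prove is Proposition~\ref{dcompact}: in a finite-dimensional space, every bounded closed set is compact (Heine--Borel). The plan is to reduce to $\mathbb{R}^n$ with the Euclidean norm, prove it there by sequential compactness, and then transfer the result to an arbitrary norm.

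\emph{Reduction.} Fix a basis $e_1,\dots,e_n$ and the coordinate isomorphism $T:\mathbb{R}^n\to V$, $T(a)=\sum_i a_ie_i$. The key ingredient is that all norms on a finite-dimensional space are equivalent, so there are constants $0<c\le C$ with
\[
c\|a\|_2\le \|T a\|\le C\|a\|_2 .
\]
The upper bound follows from Cauchy--Schwarz: $\|Ta\|\le \sum_i|a_i|\|e_i\|\le C\|a\|_2$. This also shows $a\mapsto\|Ta\|$ is continuous. For the lower bound, take $c=\min_{\|a\|_2=1}\|Ta\|$. This minimum is attained because the Euclidean unit sphere is compact in $\mathbb{R}^n$ by the Euclidean case below. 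It is positive because $T$ is injective. With the equivalence in hand, $T$ and $T^{-1}$ are Lipschitz, so they preserve boundedness, closedness, convergent sequences and compactness. Since the spaces here are normed, hence metric, compactness coincides with sequential compactness.

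\emph{Euclidean case.} Let $K\subset\mathbb{R}^n$ be bounded and closed, and take a sequence $(x^k)\subset K$. Each coordinate sequence $(x^k_i)$ is a bounded real sequence. Apply Bolzano--Weierstrass to the first coordinate to extract a convergent subsequence. Then pass to further subsequences coordinate by coordinate, finitely many times, to get a subsequence converging in every coordinate, hence in $\|\cdot\|_2$. Closedness of $K$ puts the limit in $K$. So $K$ is sequentially compact, hence compact.

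The main obstacle is the norm-equivalence step, because it seems to use compactness of the sphere, which is what we are proving. The circularity is avoided by the order of the argument: prove the Euclidean case first, with no norm equivalence needed, and only then use it to get the lower constant $c$. If the paper intends ``finite-dimensional space'' to mean $\mathbb{R}^{d}$ with the Euclidean norm, the Euclidean case alone suffices. For the metrics $d_i^k$ used later, the proposition applies only insofar as they induce the standard topology, which Assumption~\ref{assump2} presumably guarantees.
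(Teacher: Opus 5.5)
Your proof is correct. The paper gives no proof of this proposition. It lists it among the preliminaries as the classical Heine--Borel theorem, with a citation to standard textbooks. Your argument (coordinatewise Bolzano--Weierstrass in $\mathbb{R}^n$, then transfer to an arbitrary norm through the coordinate isomorphism) supplies the standard proof the paper takes for granted.

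Two remarks.

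First, the general-norm version you prove is exactly the one the paper needs. It later treats closed $\|\cdot\|_d$-balls as compact (Lemma~\ref{metric_bound}), and the cluster set $z'$ as compact because it is closed and bounded in a normed space (Lemma~\ref{t3}). A Euclidean-only version would not directly cover those uses. Your ordering (Euclidean case first, then the lower constant $c$ from the compact Euclidean sphere) correctly avoids circularity. You in effect derive Proposition~\ref{p_norm_equiv} rather than assume it.

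Second, your hedge about the metrics $d_i^k$ can be made definite. Lemma~\ref{lemma1} shows every admissible metric is translation invariant and absolutely homogeneous. Hence $d_i^k(x,y)=N(x-y)$, where $N(v):=d_i^k(v,0)$ is a norm; subadditivity follows from the triangle inequality plus translation invariance. So each $d_i^k$ induces the Euclidean topology. In any case, the paper applies this proposition only to norm topologies, never to the $d_i^k$ directly.
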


\begin{proposition}
Let $f:\mathbb{R}^{n}\rightarrow\mathbb{R}$ and $x_0\in\mathbb{R}^{n}$. Then
\begin{center}
$\lim_{x\rightarrow x_0}f(x)=f(x_0)\Leftrightarrow({\forall}\{x_k\}_{k\in\mathbb{N}}\subseteq\mathbb{R}^{n},~\lim_{k\rightarrow\infty}x_k=x_0\Rightarrow\lim_{k\rightarrow\infty}f(x_k)=f(x_0))$.
\end{center}
\label{tf4}
\end{proposition}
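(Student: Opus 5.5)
The plan is to prove the two implications of Proposition \ref{tf4} separately, working with the $\varepsilon$--$\delta$ definition of the limit in $\mathbb{R}^n$. Here $x\rightarrow x_0$ is understood with respect to the norm topology. All norms on $\mathbb{R}^n$ induce the same topology, and this is the same finite-dimensional fact behind Proposition \ref{dcompact}, so the choice of norm $\|\cdot\|$ does not matter.

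\textbf{($\Rightarrow$).} Assume $\lim_{x\rightarrow x_0}f(x)=f(x_0)$. Let $\{x_k\}$ satisfy $x_k\rightarrow x_0$, and fix $\varepsilon>0$.
\begin{itemize}
\item By hypothesis there is $\delta>0$ such that $\|x-x_0\|<\delta$ implies $|f(x)-f(x_0)|<\varepsilon$. The case $x=x_0$ holds trivially, since the limit value equals $f(x_0)$.
\item Since $x_k\rightarrow x_0$, there is $K$ such that $\|x_k-x_0\|<\delta$ for all $k\ge K$.
\item Hence $|f(x_k)-f(x_0)|<\varepsilon$ for all $k\ge K$, which gives $f(x_k)\rightarrow f(x_0)$.
\end{itemize}
This direction is routine.

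\textbf{($\Leftarrow$).} I argue by contraposition. Suppose $\lim_{x\rightarrow x_0}f(x)\neq f(x_0)$, meaning the limit either fails to exist or differs from $f(x_0)$. Then there is $\varepsilon_0>0$ such that for every $\delta>0$ some $x$ with $\|x-x_0\|<\delta$ has $|f(x)-f(x_0)|\ge\varepsilon_0$.
\begin{itemize}
\item For each $k\in\mathbb{N}$, take $\delta=1/k$ and choose such a point $x_k$. This is a countable choice.
\item Then $\|x_k-x_0\|<1/k$, so $x_k\rightarrow x_0$.
\item At the same time $|f(x_k)-f(x_0)|\ge\varepsilon_0$ for all $k$, so $f(x_k)\not\rightarrow f(x_0)$.
\end{itemize}
This contradicts the sequential hypothesis, so the limit must equal $f(x_0)$.

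The only delicate step is negating the $\varepsilon$--$\delta$ statement correctly and building the witnessing sequence. The key choice is $\delta=1/k$, which relies on the first countability of $\mathbb{R}^n$ (countable neighbourhood bases).

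A remark on conventions: if the limit is taken over punctured neighbourhoods, the same argument works, with the points $x_k$ chosen so that $x_k\neq x_0$. The point $x_0$ itself is harmless in either convention because the target value is $f(x_0)$.
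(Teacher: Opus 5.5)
Your proof is correct: the direct $\varepsilon$--$\delta$ argument for ($\Rightarrow$) and the contrapositive with $\delta=1/k$ for ($\Leftarrow$) are exactly the standard textbook proof. The paper gives no proof of Proposition \ref{tf4}; it lists the result among the preliminaries taken from standard references, so there is nothing to compare against, and your appeal to norm-independence is more directly Proposition \ref{p_norm_equiv} (equivalence of norms) than Proposition \ref{dcompact}.
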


\begin{definition}
For $S \subseteq \mathbb{R}^{n}$, we define the distance between the set $S$ and the point $x$ as $\operatorname{dist}(x, S): =\operatorname*{inf}\,\left\{\|x-y\|: \,y\in S\right\}. $
\end{definition}

\subsection{Notation and preliminaries for nonconvex analysis}
Next, we introduce some preliminaries for nonconvex analysis \cite{bolte2014proximal,ciarlet2013linear}.  
\begin{definition}
Proper function: a function $g: \mathbb{R}^{n} \rightarrow (-\infty,+\infty]$ is said to be proper if $\mathrm{dom}$ $g \neq \emptyset$, where $\mathrm{dom}~g=\{x\in\mathbb{R}^{n}:g(x)<\infty\}$. 
\label{dproper}
\end{definition}
\begin{definition}
A function $f:\mathbb{R}^{n}\rightarrow(-\infty,+\infty]$ is lower semicontinuous if, for every sequence $\{x_k\}_{k\in\mathbb{N}}\subseteq\mathbb{R}^{n}$ satisfying $x_k\rightarrow x$, we have
\begin{center}
$f(x)\leq\liminf_{k\rightarrow\infty}f(x_k)$.
\end{center}
\label{lower}
\end{definition}

\begin{definition}
Coercive function: a function $f:\mathbb{R}^{n}\rightarrow(-\infty,+\infty]$ is coercive if $f(x)\rightarrow+\infty$ as $\|x\|\rightarrow\infty$. Equivalently, for every $a\in\mathbb{R}$, the sublevel set $\{x\in\mathbb{R}^{n}:f(x)\leq a\}$ is bounded. 
\label{dlower}
\end{definition}
\begin{definition}
Let $f$ be a proper lower semicontinuous function, The Fréchet subdifferential of $f$ at $x$, written ${\hat{\partial}} f(x)$, is the set of all vectors u which satisfy  
\begin{center}
$\liminf_{y\neq x,y\to x}\frac{f(y)-f(x)-\langle u,\ y-x\rangle}{\|y-x\|}\ge0, $
\end{center}
when $ x \notin \mathrm{dom}~ f$, then set ${\hat{\partial}} f(x)=\emptyset$.
\label{d2}
\label{dsubdiff}
\end{definition}
\begin{definition}
The limiting subdifferential  $\partial f(x): =\{u\in\mathbb{R}^{n}: \exists x^{k}\to x,f(x^{k})\to f(x),u^{k}\to u,u^{k}\in\widehat{\partial}f(x^{k})\}.$
\end{definition}
\begin{proposition}
Let $f$ be a proper lower semicontinuous function. If $f$ has a local minimum at $x^{*}$, then $0 \in \partial f(x^{*})$. 
\label{p1}
\end{proposition}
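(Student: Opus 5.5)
The plan is the standard Fermat-rule argument: first show that the Fréchet subdifferential contains $0$ at a local minimizer, and then pass to the limiting subdifferential using a constant sequence. Let $x^{*}$ be a local minimum of the proper lower semicontinuous function $f$.

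\textbf{Step 1 (finiteness).} Since $f$ is proper, some point has a finite value. If $x^{*}$ is a local minimizer, then $f(x^{*})\le f(y)$ for all $y$ near $x^{*}$. The meaningful case, and the one intended by the statement, is $x^{*}\in\mathrm{dom}\,f$, so that $f(x^{*})<+\infty$. We work in this case. Note that if $f(x^{*})=+\infty$ were allowed as a local minimum, then $f\equiv+\infty$ near $x^{*}$. In that case $\hat\partial f(x^{*})=\emptyset$ by Definition \ref{dsubdiff}, so the claim genuinely needs $x^{*}\in\mathrm{dom}\,f$. We make this implicit convention explicit rather than prove around it.

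\textbf{Step 2 ($0\in\hat\partial f(x^{*})$).} Choose $\delta>0$ such that $f(y)\ge f(x^{*})$ whenever $\|y-x^{*}\|<\delta$. Then for every $y\neq x^{*}$ in this ball,
\begin{equation*}
\frac{f(y)-f(x^{*})-\langle 0,\,y-x^{*}\rangle}{\|y-x^{*}\|}=\frac{f(y)-f(x^{*})}{\|y-x^{*}\|}\ge 0 .
\end{equation*}
Taking the $\liminf$ as $y\to x^{*}$ preserves the inequality. Hence $u=0$ satisfies Definition \ref{dsubdiff}, so $0\in\hat\partial f(x^{*})$.

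\textbf{Step 3 (passing to $\partial f$).} Take the constant sequences $x^{k}=x^{*}$ and $u^{k}=0$. Then $x^{k}\to x^{*}$ and $u^{k}\to 0$. Also $f(x^{k})=f(x^{*})\to f(x^{*})$, which is a finite value by Step 1. Finally, $u^{k}\in\hat\partial f(x^{k})$ by Step 2. By the definition of the limiting subdifferential, $0\in\partial f(x^{*})$.

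No step is a real obstacle. The only delicate point is the domain issue in Step 1, where the finiteness of $f(x^{*})$ is needed both for the Fréchet subdifferential to be nonempty and for the convergence $f(x^{k})\to f(x^{*})$ to be meaningful. Lower semicontinuity is not actually used; it only guarantees the setting in which these subdifferentials are defined.
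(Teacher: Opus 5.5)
Your proof is correct, and it is the standard proof of the generalized Fermat rule. You first get $0\in\hat{\partial} f(x^{*})$ directly from the local-minimum inequality, then insert the constant sequences $x^{k}=x^{*}$, $u^{k}=0$ into the definition of $\partial f$. The paper gives no proof of this proposition and simply quotes it from the cited nonsmooth-analysis literature, where it is proved the same way; your convention $x^{*}\in\mathrm{dom}\,f$ is the right caveat, since for $f(x^{*})=+\infty$ both $\hat{\partial} f(x^{*})$ and $\partial f(x^{*})$ are empty.
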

\begin{proposition} 
Let $f$ be a proper lower semicontinuous function, and $g$ be a continuously differentiable function. Then $\forall x \in \mathrm{dom} ~f$, $\partial(f+g)(x)$ = $\partial f (x) +\nabla g(x)$. 
\label{p2}
\end{proposition}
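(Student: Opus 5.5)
We establish the sum rule of Proposition \ref{p2} by proving the inclusion $\partial(f+g)(x)\subseteq\partial f(x)+\nabla g(x)$ and its reverse separately. Both directions are first proved at the level of the Fréchet subdifferential and then passed to limits using the definition of the limiting subdifferential. Throughout, fix $x\in\mathrm{dom}~f$. Since $g$ is finite everywhere, $\mathrm{dom}(f+g)=\mathrm{dom}~f$.

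\textbf{Step 1 (Fréchet level).} We first show that for every $y\in\mathrm{dom}~f$,
\[
\hat\partial(f+g)(y)=\hat\partial f(y)+\nabla g(y).
\]
Because $g$ is continuously differentiable, hence Fréchet differentiable at $y$,
\[
g(z)-g(y)-\langle\nabla g(y),z-y\rangle=o(\|z-y\|).
\]
Hence for any $u$,
\[
\frac{(f+g)(z)-(f+g)(y)-\langle u,z-y\rangle}{\|z-y\|}
=\frac{f(z)-f(y)-\langle u-\nabla g(y),z-y\rangle}{\|z-y\|}+o(1).
\]
The two quotients therefore have the same $\liminf$ as $z\to y$. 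By Definition \ref{dsubdiff}, $u\in\hat\partial(f+g)(y)$ if and only if $u-\nabla g(y)\in\hat\partial f(y)$.

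\textbf{Step 2 (limiting level, $\subseteq$).} Let $u\in\partial(f+g)(x)$. Then there exist $x^k\to x$ with $(f+g)(x^k)\to(f+g)(x)$ and $u^k\in\hat\partial(f+g)(x^k)$ with $u^k\to u$. Since $g$ is continuous, $g(x^k)\to g(x)$, and therefore $f(x^k)\to f(x)$. By Step 1, $v^k:=u^k-\nabla g(x^k)\in\hat\partial f(x^k)$. By continuity of $\nabla g$, $v^k\to u-\nabla g(x)$. Thus $u-\nabla g(x)\in\partial f(x)$.

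\textbf{Step 3 (limiting level, $\supseteq$).} This direction is symmetric. Given $v\in\partial f(x)$, there exist $x^k\to x$ with $f(x^k)\to f(x)$ and $v^k\in\hat\partial f(x^k)$ with $v^k\to v$. Set $u^k=v^k+\nabla g(x^k)$. By Step 1, $u^k\in\hat\partial(f+g)(x^k)$. Moreover $(f+g)(x^k)\to(f+g)(x)$ and $u^k\to v+\nabla g(x)$, so $v+\nabla g(x)\in\partial(f+g)(x)$.

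\textbf{Main obstacle.} The only delicate point is the bookkeeping of function-value convergence, $f(x^k)\to f(x)$ versus $(f+g)(x^k)\to(f+g)(x)$, which the limiting subdifferential requires. This is exactly where continuity of $g$, not merely differentiability at $x$, is used. Continuity of $\nabla g$ is likewise what allows the correction term $\nabla g(x^k)$ to pass to the limit. The error term in Step 1 needs only Fréchet differentiability of $g$, which $C^1$ guarantees.
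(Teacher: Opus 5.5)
The paper gives no proof of this proposition; it is quoted as a standard preliminary from the cited literature (the smooth sum rule of variational analysis, cf.\ Rockafellar--Wets, \emph{Variational Analysis}, Ch.~8). Your argument is correct and is the standard one: you get the Fr\'echet-level identity from Fr\'echet differentiability of $g$, then pass to the limit using continuity of $g$ and $\nabla g$. The only point worth making explicit is in Step~2: $\hat{\partial}(f+g)(x^k)\neq\emptyset$ forces $x^k\in\mathrm{dom}~f$ by the convention in Definition~\ref{dsubdiff}, so $f(x^k)$ is finite, and both Step~1 and the identity $f(x^k)=(f+g)(x^k)-g(x^k)$ are legitimate.
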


\begin{definition} 
Set $f:X\rightarrow \mathbb{R}, ~ X \subseteq \mathbb{R}^{n}$. We say that $f\in \mathcal{C}_{L}^{1}(X)$ if, for any $x_i$-section $X_i$ of $X$ and the corresponding $x_i$-section $f_i$ of $f$, $\exists L_{\nabla_{x_{i}} f}>0$, $\forall y_i,z_i \in X_i$, s.t. 
\begin{equation} 
\|\nabla_{x_{i}} f_i(z_i)-\nabla_{x_{i}} f_i(y_i)\| \leq L_{\nabla_{x_{i}} f}\|z_i-y_i\|,  \notag 
\end{equation} 
and, for any bounded set $B\subseteq X$, $\exists L_{\nabla f}>0$, $\forall y,z\in B$, s.t. 
\begin{equation} 
\|\nabla f(z)-\nabla f(y)\| \leq L_{\nabla f}\|z-y\|. \notag 
\end{equation} 
Here, $L_{\nabla_{x_{i}} f}$ is the block-wise Lipschitz constant of $\nabla_{x_{i}} f$, and $L_{\nabla f}$ is the Lipschitz constant of $\nabla f$ on $B$. $\mathcal{C}_{L}^{1}(X)$ is the set of functions satisfying the block-wise gradient Lipschitz continuity and the gradient Lipschitz continuity on bounded sets. 
\label{dlipchitz} 
\label{d3} 
\end{definition}

\begin{proposition} 
Set $f:X\rightarrow \mathbb{R}, ~ X\subseteq\mathbb{R}^{n}$. If $f\in\mathcal{C}_{L}^{1}(X)$, then, for any $x_i$-section $X_i$ of $X$ and the corresponding $x_i$-section $f_i$ of $f$, $\forall x_i,y_i\in X_i$ satisfying $\{(1-t)x_i+ty_i:t\in[0,1]\}\subseteq X_i$, s.t. 
\begin{equation}
f_i(y_i) \leq f_i(x_i)+\langle \nabla f_i(x_i),y_i-x_i \rangle +\frac{L_{\nabla_{x_{i}} f}}{2}\|y_i-x_i\|^{2}. \notag 
\end{equation}
\label{p3} 
\end{proposition}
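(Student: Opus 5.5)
The plan is the standard descent-lemma argument via the fundamental theorem of calculus along the segment joining $x_i$ and $y_i$. Fix a section $X_i$ and $x_i,y_i\in X_i$ with the segment $\{(1-t)x_i+ty_i:t\in[0,1]\}\subseteq X_i$. Define $\phi(t)=f_i(x_i+t(y_i-x_i))$ for $t\in[0,1]$. Since $f\in\mathcal{C}^{1}_{L}(X)$, the section $f_i$ is continuously differentiable on $X_i$, and the whole segment lies in $X_i$. Hence $\phi$ is continuously differentiable on $[0,1]$ with $\phi'(t)=\langle \nabla f_i(x_i+t(y_i-x_i)),y_i-x_i\rangle$. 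This is where the segment hypothesis is used: it makes the chain rule applicable at every $t$.

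By the fundamental theorem of calculus, $f_i(y_i)-f_i(x_i)=\int_0^1\phi'(t)\,dt$. Subtracting $\langle\nabla f_i(x_i),y_i-x_i\rangle=\int_0^1\langle\nabla f_i(x_i),y_i-x_i\rangle\,dt$ gives
\begin{equation}
f_i(y_i)-f_i(x_i)-\langle\nabla f_i(x_i),y_i-x_i\rangle=\int_0^1\langle \nabla f_i(x_i+t(y_i-x_i))-\nabla f_i(x_i),\,y_i-x_i\rangle\,dt. \notag
\end{equation}
Apply Cauchy--Schwarz to the integrand. Then use the block-wise Lipschitz bound from Definition \ref{d3} with $z_i=x_i+t(y_i-x_i)\in X_i$. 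This bounds the integrand by $L_{\nabla_{x_i}f}\,t\,\|y_i-x_i\|^2$. Since $\int_0^1 t\,dt=\tfrac12$, the claimed inequality follows.

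The only delicate point is justifying differentiability of $\phi$ along the segment, i.e. that $\nabla f_i$ is defined and continuous at every point of the segment. This is exactly why the statement assumes the segment lies in $X_i$ rather than just its endpoints. If $X_i$ is not open, I would read differentiability in the sense implicit in Definition \ref{d3}, where $\nabla_{x_i}f_i$ exists on $X_i$. The Lipschitz continuity of the gradient then gives continuity of $\phi'$, so the integral is a Riemann integral and the fundamental theorem of calculus applies without further assumptions.
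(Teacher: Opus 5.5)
Your proposal is correct. It is the standard descent-lemma proof: the fundamental theorem of calculus along the segment, then Cauchy--Schwarz, then the block-wise Lipschitz bound $L_{\nabla_{x_i}f}\,t\,\|y_i-x_i\|^2$ integrated over $t\in[0,1]$. The paper gives no proof of this proposition and simply lists it among preliminaries from the cited literature, where it is proved exactly this way. Your handling of differentiability when $X_i$ is not open is a reasonable way to treat a point the statement leaves implicit; in the paper's setting (Assumption 1(i)) every section is $\mathbb{R}^{d_i}$, so the issue does not arise there.
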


\begin{proposition}
All norms on a finite-dimensional space are equivalent. 
That is, for any two norms $\|\cdot\|_{a}$ and $\|\cdot\|_{b}$ on $\mathbb{R}^{n}$, there exist constants $c_{1},c_{2}>0$ such that
$c_{1}\|x\|_{a}\leq \|x\|_{b}\leq c_{2}\|x\|_{a}, \quad \forall x\in\mathbb{R}^{n}$.
\label{p_norm_equiv}
\end{proposition}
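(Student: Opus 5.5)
The statement to prove is Proposition \ref{p_norm_equiv}: any two norms on $\mathbb{R}^{n}$ are equivalent. The plan is the standard compactness argument. It suffices to show that an arbitrary norm $\|\cdot\|_{b}$ is equivalent to a fixed reference norm, namely the Euclidean norm $\|\cdot\|$. Equivalence is transitive: if $\alpha_1\|x\|\le\|x\|_a\le\alpha_2\|x\|$ and $\beta_1\|x\|\le\|x\|_b\le\beta_2\|x\|$, then $(\beta_1/\alpha_2)\|x\|_a\le\|x\|_b\le(\beta_2/\alpha_1)\|x\|_a$. So fix a norm $\|\cdot\|_b$ and compare it with $\|\cdot\|$.

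\textbf{Upper bound.} Let $e_1,\dots,e_n$ be the standard basis and write $x=\sum_i x_i e_i$. The triangle inequality and homogeneity give $\|x\|_b\le\sum_i|x_i|\,\|e_i\|_b$. Cauchy--Schwarz then gives $\|x\|_b\le M\|x\|$ with $M=(\sum_i\|e_i\|_b^2)^{1/2}>0$.

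\textbf{Continuity.} From the reverse triangle inequality, $|\|x\|_b-\|y\|_b|\le\|x-y\|_b\le M\|x-y\|$. Hence $\|\cdot\|_b$ is Lipschitz, and in particular continuous, with respect to the Euclidean topology (via the sequential characterization in Proposition \ref{tf4}).

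\textbf{Lower bound.} This is the main step. The unit sphere $S=\{x:\|x\|=1\}$ is bounded and closed (it is the preimage of $\{1\}$ under the continuous map $\|\cdot\|$), so by Proposition \ref{dcompact} it is compact. The continuous function $\|\cdot\|_b$ therefore attains a minimum $m$ on $S$ at some point $x^\ast\in S$. Since $x^\ast\neq0$ and $\|\cdot\|_b$ is a norm, $m=\|x^\ast\|_b>0$. For $x\ne0$, apply homogeneity to $x/\|x\|\in S$ to get $\|x\|_b\ge m\|x\|$; the case $x=0$ is trivial. This gives $m\|x\|\le\|x\|_b\le M\|x\|$, and combining with the reduction above yields constants $c_1,c_2>0$ as stated.

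The only delicate point is the lower bound. It relies on compactness of the sphere together with continuity of $\|\cdot\|_b$ in the Euclidean topology, and that continuity is exactly what the upper-bound step supplies, so the two steps must be done in this order.
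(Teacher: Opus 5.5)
Your proof is correct. The paper gives no proof of this proposition and simply lists it among standard preliminaries cited to textbooks; your compactness argument (Euclidean reference norm, upper bound from the basis expansion and Cauchy--Schwarz, lower bound from the minimum of $\|\cdot\|_b$ on the compact Euclidean unit sphere) is the standard proof of this fact. It is also worth keeping explicit, as you did, that Proposition \ref{dcompact} is used only in its Euclidean (Heine--Borel) form, since invoking it for an arbitrary norm would be circular.
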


\begin{proposition}
Every finite-dimensional normed space is complete. 
Therefore, every Cauchy sequence in a finite-dimensional normed space is convergent.
\label{p_complete}
\end{proposition}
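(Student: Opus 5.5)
The last statement is Proposition \ref{p_complete}: every finite-dimensional normed space is complete. My plan is to reduce to completeness of $\mathbb{R}$ through the equivalence of norms in Proposition \ref{p_norm_equiv}.

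Let $(V,\|\cdot\|_{a})$ be a finite-dimensional normed space of dimension $n$. Fix a basis $e_1,\dots,e_n$ and let $T:\mathbb{R}^{n}\to V$, $T(c)=\sum_{i}c_ie_i$, be the coordinate isomorphism. Then $\|c\|_{b}:=\|T(c)\|_{a}$ is a norm on $\mathbb{R}^{n}$. By Proposition \ref{p_norm_equiv} there are constants $c_1,c_2>0$ with
\begin{equation}
c_1\|c\|_{\infty}\le \|c\|_{b}\le c_2\|c\|_{\infty}\qquad \forall c\in\mathbb{R}^{n}, \notag
\end{equation}
where $\|c\|_{\infty}=\max_i|c_i|$. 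So it suffices to prove completeness of $(\mathbb{R}^{n},\|\cdot\|_{\infty})$ and transport the result back through $T$.

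\textbf{Step 1 (transfer of Cauchy sequences).} Let $\{v_k\}$ be Cauchy in $V$ and set $c^{k}=T^{-1}(v_k)$. The lower bound gives $\|c^{k}-c^{m}\|_{\infty}\le c_1^{-1}\|v_k-v_m\|_{a}$, so $\{c^{k}\}$ is Cauchy in $\|\cdot\|_{\infty}$. In particular, each coordinate sequence $\{c^{k}_i\}_k$ is Cauchy in $\mathbb{R}$.

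\textbf{Step 2 (coordinatewise limits).} By completeness of $\mathbb{R}$ (the least upper bound property), each coordinate converges, $c^{k}_i\to c^{*}_i$. Since there are only finitely many coordinates, for any $\epsilon>0$ one can take the maximum of the $n$ thresholds, which gives $\|c^{k}-c^{*}\|_{\infty}\to0$.

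\textbf{Step 3 (back to $V$).} Set $v^{*}=T(c^{*})$. The upper bound gives $\|v_k-v^{*}\|_{a}\le c_2\|c^{k}-c^{*}\|_{\infty}\to0$. Hence $v_k\to v^{*}$, so every Cauchy sequence converges and $V$ is complete.

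The main obstacle is that Proposition \ref{p_norm_equiv} is stated for norms on $\mathbb{R}^{n}$, while $V$ is an abstract space. This is why I pull the norm back through $T$: $\|\cdot\|_{b}$ is a norm because $T$ is linear and injective. Finite dimensionality is used in two places. First, it makes norm equivalence available. Second, in Step 2 it guarantees a finite maximum over the coordinate thresholds. The rest is routine.
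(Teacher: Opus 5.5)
Your proof is correct. It is the standard textbook argument: pull the norm back to $\mathbb{R}^{n}$ through a coordinate isomorphism, compare it with $\|\cdot\|_{\infty}$ via Proposition \ref{p_norm_equiv}, and reduce to completeness of $\mathbb{R}$ coordinate by coordinate.

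The paper gives no proof of this proposition. It is listed among the standard preliminaries taken from the references cited at the start of Section 2, so there is no competing argument to compare with. The one thing worth checking is circularity, and you are fine there. Proposition \ref{p_norm_equiv} is normally proved from compactness of the $\|\cdot\|_{\infty}$-unit sphere in $\mathbb{R}^{n}$ (Bolzano--Weierstrass), which rests on completeness of $\mathbb{R}$, not on the statement you are proving.

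The paper only invokes this fact on $\mathbb{R}^{d_i}$ equipped with $\|\cdot\|_d$ (Property \ref{based}(iv), Theorem \ref{glo}). For that use your pull-back through $T$ is unnecessary, and Steps 1--3 can be applied directly with $\|\cdot\|_b=\|\cdot\|_d$.
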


\begin{proposition}
Set $f:X\rightarrow \mathbb{R},~X\subseteq\mathbb{R}^{n}$. 
If $f\in\mathcal{C}_{L}^{1}(X)$, then, for any $x_i$-section $X_i$ of $X$ and the corresponding $x_i$-section $f_i$ of $f$, $\forall x_i,y_i\in X_i$ satisfying $\{(1-t)x_i+ty_i:t\in[0,1]\}\subseteq X_i$, any selected inner product $\langle\cdot,\cdot\rangle_{i,k}$, and any admissible metric $d_i^k$ on the $i$-th block space, there exists a constant $L_{i,k}>0$ such that
\begin{align}
f_i(y_i)
&\leq
f_i(x_i)
+\langle R_{i,k}^{-1}\nabla_i f_i(x_i),y_i-x_i\rangle_{i,k}
+\frac{L_{i,k}}{2}(d_i^k(y_i,x_i))^{2}.
\notag
\end{align}
where $\|z\|_{i,k}=\sqrt{\langle z,z\rangle_{i,k}}$, and $R_{i,k}^{-1}\nabla_i f_i(x_i)$ denotes the gradient of $f_i$ with respect to $\langle\cdot,\cdot\rangle_{i,k}$.
\label{p3}
\end{proposition}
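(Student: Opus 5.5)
The plan is to reduce the statement to the Euclidean descent lemma (the first Proposition \ref{p3}) and then transfer both the linear term and the quadratic term into the new geometry. First I represent the selected inner product. Since $\langle\cdot,\cdot\rangle_{i,k}$ is an inner product on the finite-dimensional space $\mathbb{R}^{d_i}$, there is a symmetric positive definite matrix $R_{i,k}$ with $\langle u,v\rangle_{i,k}=\langle R_{i,k}u,v\rangle$ for all $u,v$. Take $u,v$ to be the standard basis vectors and set $(R_{i,k})_{pq}=\langle e_p,e_q\rangle_{i,k}$; bilinearity, symmetry and positivity of the inner product give the claimed properties. This justifies the notation in the statement: the gradient of $f_i$ with respect to $\langle\cdot,\cdot\rangle_{i,k}$ is the vector $g$ satisfying $\langle g,v\rangle_{i,k}=\langle\nabla_i f_i(x_i),v\rangle$ for every $v$. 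That vector is $g=R_{i,k}^{-1}\nabla_i f_i(x_i)$. Consequently
\begin{equation*}
\langle R_{i,k}^{-1}\nabla_i f_i(x_i),y_i-x_i\rangle_{i,k}=\langle \nabla_i f_i(x_i),y_i-x_i\rangle,
\end{equation*}
so the linear term is exactly the Euclidean one.

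Next I apply the first Proposition \ref{p3}, which uses the segment hypothesis and $f\in\mathcal{C}^1_L(X)$. It gives
\begin{equation*}
f_i(y_i)\le f_i(x_i)+\langle R_{i,k}^{-1}\nabla_i f_i(x_i),y_i-x_i\rangle_{i,k}+\frac{L_{\nabla_{x_i}f}}{2}\|y_i-x_i\|^2 .
\end{equation*}
It then remains to bound $\|y_i-x_i\|^2$ by a multiple of $(d_i^k(y_i,x_i))^2$.

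This comparison is the main obstacle. It depends on what "admissible metric" means, and Assumption \ref{assump2} has not been stated yet. I would take from it the natural requirement that each $d_i^k\in\mathcal{D}_i$ dominates some norm from below, namely $d_i^k(y,x)\ge \mu_{i,k}\|y-x\|_{i,k}$ for some $\mu_{i,k}>0$. The typical admissible choices satisfy this: the norm $\|\cdot\|_{i,k}$ itself, or a Bregman-type metric that is strongly convex with respect to it. By Proposition \ref{p_norm_equiv} there is $c_{i,k}>0$ with $\|z\|\le c_{i,k}\|z\|_{i,k}$. Combining the two bounds gives
\begin{equation*}
\|y_i-x_i\|\le \frac{c_{i,k}}{\mu_{i,k}}\, d_i^k(y_i,x_i).
\end{equation*}
Substituting, the statement holds with $L_{i,k}=L_{\nabla_{x_i}f}\,c_{i,k}^2/\mu_{i,k}^2>0$.

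The assumption only needs to hold on the relevant bounded set. If Assumption \ref{assump2} gives the lower bound $d_i^k\ge\mu\|\cdot\|$ only locally, I would restrict $x_i,y_i$ to the bounded set on which the iterates live and use the compactness in Proposition \ref{dcompact} to get a uniform positive $\mu_{i,k}$ there. One way is a continuity argument for the ratio $d_i^k(y,x)/\|y-x\|_{i,k}$ away from the diagonal, using Proposition \ref{p_metric}. The resulting constant is then the "local majorization constant" of Table \ref{notation}.
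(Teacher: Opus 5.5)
The paper gives no proof of this proposition: it is listed among the preliminaries, and it silently depends on Assumption~\ref{assump2}(i), which appears only later. There is therefore no competing route. Your reduction is the natural argument and it is correct: the Riesz representation turns the linear term into the Euclidean one, the Euclidean descent lemma applies, and then $\|\cdot\|$ is compared with $d_i^k$.

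The lower bound you postulate is exactly Assumption~\ref{assump2}(i). It is better used as stated, with the fixed norm $\|\cdot\|_d$ instead of $\|\cdot\|_{i,k}$. If $\|z\|\le c\|z\|_d$ (Proposition~\ref{p_norm_equiv}), then $\|y_i-x_i\|\le (c/C_d)\,d_i^k(y_i,x_i)$, and $L_{i,k}=L_{\nabla_{x_i}f}\,c^2/C_d^2$ works. The geometric factor is then independent of $k$ and of the chosen inner product. That uniformity is what the later requirement $\inf_{i,k}\sigma_i^k>0$, together with $s_i^k>L_{i,k}$, implicitly needs. Your constant $L_{\nabla_{x_i}f}\,c_{i,k}^2/\mu_{i,k}^2$ proves the statement as written but hides this. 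Parts (ii) and (iii) of the assumption are not needed here.

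Drop your last paragraph, for three reasons.
\begin{itemize}
\item The scenario it guards against cannot occur. The metric axioms together with Assumption~\ref{assump2}(ii) already force every admissible $d_i^k$ to be translation invariant and absolutely homogeneous (Lemma~\ref{lemma1}). Hence $d_i^k(x,y)=N(x-y)$ for some norm $N$, and norm equivalence gives a global lower bound by a multiple of $\|x-y\|$ for each fixed $d_i^k$. Assumption~\ref{assump2}(i) only makes that bound uniform.
\item The proposed fix would not work anyway. Compactness away from the diagonal gives no control as $y\to x$, and for a general metric the ratio $d(y,x)/\|y-x\|$ may tend to $0$ there. For $d(x,y)=|x^3-y^3|$ on $\mathbb{R}$, $f(x)=x^2/2$ and $x=0$, the required inequality reads $y^2\le Ly^6$, which fails for small $y$. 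Moreover, Proposition~\ref{p_metric} gives continuity of $d$ in its own topology, not the Euclidean one.
\item Bregman distances are in general neither symmetric nor satisfy the triangle inequality. They are therefore not metrics in the sense of Definition~\ref{d0} and are not examples of admissible metrics here.
\end{itemize}
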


\begin{proposition}
Let $f:\mathbb{R}^{n}\rightarrow(-\infty,+\infty]$ be a proper lower semicontinuous convex function. 
Then $\partial f$ is locally bounded on $\operatorname{int}(\operatorname{dom} f)$. 
That is, for any compact set $K\subset \operatorname{int}(\operatorname{dom} f)$, there exists $C_K>0$ such that $\|u\|\leq C_K,\quad \forall x\in K,\quad \forall u\in \partial f(x).$
\label{p_sub_bound}
\end{proposition}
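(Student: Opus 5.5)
The statement is the last Proposition \ref{p_sub_bound}: a proper lsc convex function has a locally bounded subdifferential on $\operatorname{int}(\operatorname{dom} f)$. I plan a standard three-step argument. First, $f$ is locally bounded above near every interior point. Second, this gives a bound on subgradients through the subgradient inequality. Third, compactness of $K$ makes the bound uniform.

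\textbf{Step 1 (local upper bound).} Fix $x_0\in\operatorname{int}(\operatorname{dom} f)$ and choose $r>0$ so that the $\ell_1$-ball of radius $2r$ around $x_0$ lies in $\operatorname{dom} f$. This ball is the convex hull of the $2n$ points $x_0\pm 2re_j$, and $f$ is finite at each of them. By Jensen's inequality,
\[
f\le M_{x_0}:=\max_j f(x_0\pm 2re_j)<\infty
\]
on that ball. By Proposition \ref{p_norm_equiv}, the ball contains a Euclidean ball $\bar B(x_0,2\rho)$, so $f\le M_{x_0}$ there. I also need a lower bound. For $x$ in the ball, the point $2x_0-x$ is in the ball too, and convexity gives
\[
f(x)\ge 2f(x_0)-f(2x_0-x)\ge 2f(x_0)-M_{x_0}.
\]
So $f$ is bounded above and below on $\bar B(x_0,2\rho)$; write $m_{x_0}$ for the lower bound.

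\textbf{Step 2 (subgradient bound).} For a convex function the limiting subdifferential coincides with the convex subdifferential. I will note this briefly: Fréchet subgradients of a convex $f$ satisfy the global subgradient inequality, and that inequality passes to limits because $f(x^k)\to f(x)$. So every $u\in\partial f(x)$ satisfies $f(y)\ge f(x)+\langle u,y-x\rangle$ for all $y$. Take $x\in \bar B(x_0,\rho)$ and $u\in\partial f(x)$ with $u\ne0$, and set $y=x+\rho u/\|u\|\in\bar B(x_0,2\rho)$. Then
\[
\rho\|u\|\le f(y)-f(x)\le M_{x_0}-m_{x_0},
\]
hence $\|u\|\le (M_{x_0}-m_{x_0})/\rho$.

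\textbf{Step 3 (compactness).} The open balls $B(x_0,\rho_{x_0})$, $x_0\in K$, cover $K$. By compactness (Proposition \ref{dcompact}) finitely many of them suffice. Taking $C_K$ to be the maximum of the corresponding finitely many bounds gives the claim.

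I expect the main obstacle to be the justification in Step 1, together with the identification of the limiting subdifferential with the convex one in Step 2. The rest is routine. If the finiteness of $f$ at the vertices needs more care, I would shrink $r$ so that the whole $\ell_1$-ball lies in $\operatorname{dom} f$; this is possible because $x_0$ is interior.
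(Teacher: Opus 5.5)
Your proof is correct, and it is the standard textbook argument. Jensen on the cross-polytope $\operatorname{conv}\{x_0\pm 2re_j\}$ gives a local upper bound, and the reflection $x\mapsto 2x_0-x$ gives a local lower bound. The subgradient inequality with $y=x+\rho u/\|u\|$ then turns the oscillation bound into $\|u\|\le (M_{x_0}-m_{x_0})/\rho$, and a finite subcover makes the bound uniform on $K$.

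The paper gives no proof of this proposition. It imports it as a background fact from the cited references, so there is no competing route to compare against.

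Your write-up does supply a step the citation glosses over. In this paper $\partial$ is the limiting subdifferential, while the classical result concerns the convex subdifferential. Your Step 2 gives exactly the inclusion needed: Fr\'echet subgradients of a convex function satisfy the global subgradient inequality, and this survives the limit because $f(x^k)\to f(x)$ is built into the definition. You say the two subdifferentials ``coincide'' but only prove one inclusion; that inclusion is all the bound requires.

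Two small touch-ups:
\begin{itemize}
\item The statement asks for $C_K>0$, but your local bounds can be $0$ (for instance, when $f$ is locally constant). Take the maximum of the finitely many local bounds together with $1$.
\item In Step 3 you are using the finite-subcover property of the compact set $K$, which is a hypothesis. You are not using the closed-and-bounded criterion of Proposition~\ref{dcompact}.
\end{itemize}
Finally, lower semicontinuity is never used, so your argument works for every proper convex function.
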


\begin{definition}
Desingularization function: we denote the desingularization function $\phi: [0,\eta)\rightarrow \mathbb{R}_{+}$ which satisfies the following conditions. 

(i) $\phi(0)=0$.

(ii) $\phi$ is continuous on $[0,\eta)$, continuously differentiable on
$(0,\eta)$, and satisfies $\phi'(s)>0$ for every $s\in(0,\eta)$. 

(iii)  $\phi$ is concave on $[0,\eta)$. 
\label{des}
\end{definition}

The Kurdyka-Łojasiewicz (KŁ) property \cite{attouch2013convergence,xu2013block}, described below, serves as a key tool for global convergence analysis and establishing convergence rate.  
\begin{definition}
Let $f$ be a proper lower semicontinuous function. The function $f$ is said to have the KŁ property at $\bar{u}\in\operatorname{dom}(\partial f)$ if there exist $\eta\in\left(0,+\infty\right]$, a neighborhood $U$ of $\bar{u}$, and a desingularization function $\phi$ such that, for every $u\in U$ satisfying $f(\bar{u})<f(u)<f(\bar{u})+\eta$, we have
\begin{center}
$\phi^{\prime}(f(u)-f(\bar{u}))\operatorname{dist}(0,\partial f(u))\geq1$.
\end{center}
If $f$ has the KŁ property at each point of $\operatorname{dom}(\partial f)$, then $f$ is a KŁ function. 
\label{def_KL}
\end{definition}
\begin{proposition}
(Uniformized KŁ property) Let $\Omega$ be a compact set and $f$:  $\mathbb{R}^{n} \rightarrow \mathbb{R} \cup \left\{\infty\right\}$ be a proper and lower semicontinuous function. Assume that $f$ is constant on $\Omega$ and is a KŁ function. Then, there exist $\epsilon>0$, $\eta>0$, and a desingularization function $\phi$ such that, for every $\bar{x}\in\Omega$ and every $x$ satisfying
$\operatorname{dist}(x,\Omega)<\epsilon$ and
$f(\bar{x})<f(x)<f(\bar{x})+\eta$, we have
\begin{center}
$\phi^{\prime}(f(x)-f(\bar{x}))\operatorname{dist}(0,\partial f(x))\geq1$.
\end{center}
\label{def_uniKL}
\end{proposition}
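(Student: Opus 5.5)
The plan is the standard finite-subcover argument of Bolte, Sabach and Teboulle, using the pointwise KŁ property from Definition~\ref{def_KL} at each point of $\Omega$. Let $f^*$ denote the constant value of $f$ on $\Omega$. We may assume $\Omega\subseteq\operatorname{dom}(\partial f)$, which is implicit in applying the pointwise KŁ property at each $\bar u\in\Omega$.

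\textbf{Step 1 (local data and finite subcover).} For each $\bar u\in\Omega$, Definition~\ref{def_KL} gives $\eta_{\bar u}>0$, a radius $\epsilon_{\bar u}>0$ (shrink the neighborhood $U$ to a Euclidean ball $B(\bar u,\epsilon_{\bar u})$), and a desingularization function $\phi_{\bar u}$ on $[0,\eta_{\bar u})$. These satisfy the KŁ inequality on $B(\bar u,\epsilon_{\bar u})\cap\{f^*<f<f^*+\eta_{\bar u}\}$. The half-size balls $B(\bar u,\epsilon_{\bar u}/2)$ cover $\Omega$. Since $\Omega$ is compact, finitely many of them suffice, say those centered at $u_1,\dots,u_p$. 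Set
\[
\epsilon=\tfrac12\min_i\epsilon_{u_i},\qquad \eta=\min_i\eta_{u_i}.
\]

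\textbf{Step 2 (localization).} Let $x$ satisfy $\operatorname{dist}(x,\Omega)<\epsilon$. Since $\Omega$ is compact, there is $\bar x_0\in\Omega$ with $\|x-\bar x_0\|<\epsilon$. Pick $i$ with $\bar x_0\in B(u_i,\epsilon_{u_i}/2)$. Then
\[
\|x-u_i\|<\epsilon+\epsilon_{u_i}/2\le\epsilon_{u_i},
\]
so $x$ lies in the KŁ neighborhood of $u_i$. Because $f$ is constant on $\Omega$, $f(\bar x)=f^*=f(u_i)$ for every $\bar x\in\Omega$. Hence the hypothesis $f(\bar x)<f(x)<f(\bar x)+\eta$ becomes $0<f(x)-f^*<\eta\le\eta_{u_i}$. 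This yields $\phi_{u_i}'(f(x)-f^*)\,\operatorname{dist}(0,\partial f(x))\ge1$.

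\textbf{Step 3 (one common desingularizer).} This is the main obstacle: the functions $\phi_{u_i}$ differ, and we need a single $\phi$ with $\phi'\ge\phi_{u_i}'$ on $(0,\eta)$ for all $i$. Define
\[
\phi(s)=\int_0^s\max_{1\le i\le p}\phi_{u_i}'(t)\,dt,\qquad s\in[0,\eta).
\]

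\emph{Properties of $\phi$.}
\begin{itemize}
\item Each $\phi_{u_i}'$ is positive and continuous on $(0,\eta)$, so the pointwise maximum is positive and continuous there. Hence $\phi\in C^1(0,\eta)$ with $\phi'>0$.
\item Each $\phi_{u_i}'$ is nonincreasing by concavity, and a maximum of nonincreasing functions is nonincreasing. So $\phi'$ is nonincreasing and $\phi$ is concave.
\item Integrability near $0$ holds because $\max_i\phi_{u_i}'\le\sum_i\phi_{u_i}'$. Therefore $\phi(s)\le\sum_i\phi_{u_i}(s)\to0$ as $s\to0$, which gives $\phi(0)=0$ and continuity on $[0,\eta)$.
\end{itemize}

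Thus $\phi$ satisfies Definition~\ref{des}. Since $\phi'\ge\phi_{u_i}'$, Step 2 gives
\[
\phi'(f(x)-f(\bar x))\,\operatorname{dist}(0,\partial f(x))\ge1,
\]
which proves the claim.
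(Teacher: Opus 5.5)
Your argument is correct. It is the standard finite-subcover proof of the uniformized KŁ lemma of Bolte, Sabach and Teboulle \cite{bolte2014proximal}, which the paper quotes without proof, and it uses the same common desingularizer $\phi(s)=\int_0^s\max_i\phi_{u_i}'(t)\,dt$. The one assumption you add, $\Omega\subseteq\operatorname{dom}(\partial f)$, does not literally follow from ``$f$ is a KŁ function'' but is harmless. It holds in the paper's application ($\Omega=z'\subseteq\operatorname{crit}J$), and in general any $\bar u\in\Omega$ with $f(\bar u)<\infty$ and $0\notin\partial f(\bar u)$ (in particular $\partial f(\bar u)=\emptyset$) satisfies the KŁ inequality with a linear $\phi(s)=s/c$, by closedness of the graph of the limiting subdifferential.
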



\section{The proposed methods}
\label{section: A}
In this section, we present the technical details of our proposed generalized geometry proximal linearized operator and the GGBPL and iGGBPL methods.

First, we make the following assumptions about Eq. (\ref{e11}). 
\begin{assumption}
(i) $\mathbb{D}_{H} =\prod_{i=1}^N \mathbb{R}^{d_{i}}$, $H:\mathbb{D}_{H}\rightarrow\mathbb{R}$, $H\in\mathcal{C}_{L}^{1}(\mathbb{D}_{H})$. 

(ii) $F_{i}: \mathbb{D}_{F_{i}} \rightarrow \overline{\mathbb{R}}$ is a proper, lower semicontinuous function with a lower bound. $F_i(x)=+\infty$ for $x\notin\mathbb{D}_{F_i}$. 

(iii) $J$ is a Kurdyka-Łojasiewicz (KŁ) function, and $J$ is also a coercive proper function with a lower bound. 
\label{assump1}
\end{assumption}

Assumption \ref{assump1} is the standard assumption for Eq. (\ref{e11}). We also assume the following conditions for the metrics of our proposed methods.

\begin{assumption}
(i) There exist a norm $\|\cdot\|_d$ and a positive constant $C_d$ s.t. $C_d\|x-y\|_d\leq d_i^k(x,y)$ ($\forall x,y\in\mathbb{R}^{d_i}$, $\forall d_i^k(\cdot,\cdot)\in\mathcal{D}_i$).

(ii) $d_i^k(\cdot,y)$ is convex ($\forall y\in\mathbb{R}^{d_i}$).

(iii) $\forall i\in[N]$, define the upper envelope $\omega_i(x,y):=\sup_{d\in\mathcal{D}_i}d(x,y)$. Then, $\omega_i$ is bounded on every compact  subset of $\mathbb{R}^{d_i}\times\mathbb{R}^{d_i}$. 
\label{assump2}
\end{assumption}

Then, we replace the standard inner product and the squared Euclidean norm proximal term in Eq. (\ref{e12}) with an arbitrary inner product $\langle\cdot,\cdot\rangle_{i,k}$ and a squared admissible metric $(d_i^k(\cdot,\cdot))^2$. This yields our generalized geometry proximal linearized operator, which can be expressed as follows. 
\begin{equation}
\begin{aligned}
&&x^{k+1}_{i} &\in Gprox_{\sigma_i^kF_i}^{d_i^k}
\left(
x_i^k;
R_{i,k}^{-1}\nabla_{x_i}h_i(x_i^k)
\right)  \\
&& &:= \operatorname*{\arg\min} \limits_{x \in \mathbb{D}_{F_{i}}} \Big[F_{i}(x)+\frac{1}{2\sigma^{k}_{i}}(d_i^k(x,x^{k}_{i}))^{2}  \\
&& &+\langle R_{i,k}^{-1}\nabla_{x_{i}} H(\{x^{k+1}_{j}\}_{j=1}^{i-1},x^k_{i},\{x^{k}_{j}\}_{j=i+1}^{N}), x-x^{k}_{i} \rangle_{i,k}   \Big], 
\end{aligned}
\label{gprox} 
\end{equation}
where $\langle u,v\rangle_{i,k}=\langle R_{i,k}u,v\rangle$, $R_{i,k}$ is the Riesz map. Let $h_{i}(c)=H(\{x^{k+1}_{j}\}_{j=1}^{i-1},c,\{x^{k}_{j}\}_{j=i+1}^{N})$, next, we present our method: GGBPL (Algorithm \ref{GGBPL}).

\begin{footnotesize}
 \begin{algorithm}[!h]
\caption{GGBPL: Generalized Geometry Block Proximal Linearized method}\label{GGBPL}
    \SetAlgoLined
     \LinesNumbered
        \KwIn{$\{{x^{1}_{i}} \}^{N}_{i=1}=\{{x^{0}_{i}} \}^{N}_{i=1} \in \mathrm{dom} \ J$}
        
        
        \For {$k = 1,2,3,\ldots$, $k_{max}$ }{

        \For {$i = 1$ to N}{

        $s_i^k\in(L_{i,k},\infty),~\gamma^{k}_{i}\in (1,\infty), ~\sigma_{i}^{k} =\frac{1}{\gamma^{k}_{i} s_i^k}$,

         
        $x^{k+1}_{i} \in Gprox_{\sigma^{k}_{i} F_{i}}^{d_i^k} \left(x^{k}_{i};R_{i,k}^{-1}\nabla_{x_{i}} h_{i}(x^{k}_{i})\right).$
         
        }

        }
        \textbf{Return} $\{{x^{k+1}_{i}}\}^{N}_{i=1}$.
        
\end{algorithm}
\end{footnotesize}


As outlined in the Introduction, incorporating the inertial term is a popular and effective first-order accelerated convergence method. To accelerate convergence of the GGBPL algorithm, we also propose its inertial version combined with the inertial term: iGGBPL (Algorithm \ref{iGGBPL}).

\begin{footnotesize}
 \begin{algorithm}[!h]
\caption{iGGBPL: inertial Generalized Geometry Block Proximal Linearized method}\label{iGGBPL}
    \SetAlgoLined
     \LinesNumbered
        \KwIn{$\{{x^{1}_{i}} \}^{N}_{i=1}=\{{x^{0}_{i}} \}^{N}_{i=1} \in \mathrm{dom} \ J$,  $\rho_1>0$, $\beta^{1}=0$, $t_1=1$}
        
        
        \For {$k = 1,2,3,\cdots, k_{max}$ }{

        $\{{y^{k}_{i}} \}^{N}_{i=1}$=$\{{x^{k}_{i}} \}^{N}_{i=1}$+$\beta^{k}\times (\{{x^{k}_{i}} \}^{N}_{i=1}-\{{x^{k-1}_{i}} \}^{N}_{i=1})$. \\

        \For {$i = 1$ to N}{

        $s_i^k\in(L_{i,k},\infty),~\gamma^{k}_{i}\in (1,\infty), ~\sigma_{i}^{k} = \frac{1}{\gamma^{k}_{i} s_i^k}$,

        $x^{k+1}_{i} \in Gprox_{\sigma^{k}_{i} F_{i}}^{d_i^k} \left(y^{k}_{i};R_{i,k}^{-1}\nabla_{x_{i}} h_{i}(y^{k}_{i})\right).$

        }

        \If{$J(x^{k+1}_{(N)}) > J(x^{k}_{(N)})-\rho_{1}\sum_{i=1}^N (d_i^k(x^{k+1}_{i},x^{k}_{i}))^2$} 
        {$\beta^{k}=0$, re-update $x^{k+1}_{(N)}$ through steps 2 to 6.}

        $t_{k+1}=\frac{1+\sqrt{1+4t_{k}^{2}}}{2}$, $\beta^{k+1}=\frac{t_{k}}{t_{k+1}}$.
        }

        \textbf{Return} $\{{x^{k+1}_{i}}\}^{N}_{i=1}$.
        
\end{algorithm}
\end{footnotesize}



\section{Convergence analysis}
\label{sec4}
In this section, we demonstrate that our proposed methods guarantee convergence of the objective function values, establish the global convergence of the generated sequence to a first-order critical point, as well as derive the convergence rate of our proposed methods. 
Since the proposed methods construct block updates under general admissible metrics, the convergence analysis has to be carried out in a generalized geometric setting rather than within the standard Euclidean geometry. 
Consequently, the standard Euclidean geometry convergence analysis framework is not applicable to this broader geometry.  
Therefore, we develop a new convergence analysis framework under this generalized geometric setting.

\begin{Property}[Convergence analysis framework in a generalized geometric space]  
The main steps for proving the convergence properties, including the global convergence of the generated sequence, in this generalized geometric setting are as follows.

(\romannumeral1) There exists a positive constant $\rho$ such that 
$\rho\sum_{i=1}^N (d_i^k(x^{k+1}_{i},x^{k}_{i}))^2\leq J(x^k_{(N)})-J(x^{k+1}_{(N)})$ .

(\romannumeral2) There exists a positive constant $\rho_d $ such that $\exists\eta_i^{k+1}\in\partial_1d_i^k(x_i^{k+1},y^k_i)$ satisfying
$-\nabla_i h_i(y^k_i)-\frac{d_i^k(x_i^{k+1},y^k_i)}{\sigma_i^k}\eta_i^{k+1}\in\partial F_i(x_i^{k+1})$
and $\|\eta_i^{k+1}\|\leq \rho_d $.

(\romannumeral3) There exists a positive constant $\rho_{b}$ such that 
$\rho_{b}\sum_{i=1}^N (d_i^k(x^{k+1}_{i},y^{k}_{i})+d_i^k(x_i^{k+1},x^k_i))\geq \|p_{x^{k+1}}\|$, where $p_{x^{k+1}} \in \partial J(x^{k+1}_{(N)})$.


(\romannumeral4) By combining the favorable metric and topological properties of finite-dimensional spaces, the generated sequence $\left\{x^{k}_{(N)}\right\}_{k \in \mathbb{N}}$ converges to a point $x^* \in  \mathrm{dom} ~ J$ within this metric space. 
\label{based}
\end{Property}

When $d_i^k(x_i,y_i)$ reduces to the standard Euclidean norm and $\langle\cdot,\cdot\rangle_{i,k}$  reduces to the standard inner product, the proposed convergence analysis framework (Property \ref{based}) reduces to the standard Euclidean geometry convergence analysis framework used by PALM-type methods. Consequently, the proposed framework subsumes the PALM-type Euclidean convergence analysis as a special case and extends it to a generalized geometric setting.


Notably, Algorithm \ref{GGBPL} is recovered from Algorithm \ref{iGGBPL} by setting $\beta^k\equiv0$. Therefore, it suffices to analyze the convergence of Algorithm \ref{iGGBPL}, since the convergence results for Algorithm \ref{GGBPL} follow directly as a special case. Throughout the convergence analysis, the step sizes are selected such that $\inf_{i\in[N],~k\geq1}(\gamma_i^k-1)s_i^k>0$ and $\inf_{i\in[N],~k\geq0}\sigma_i^k>0$.

\subsection{Sufficient decrease of the objective function}
We first prove Property \ref{based}(\romannumeral1). Since the proposed method is constructed under arbitrary inner products and admissible metrics, the convergence analysis has to be carried out in a generalized metric geometric setting rather than within the standard Euclidean geometry. 
Therefore, we first establish the sufficient decrease property of the objective function $J$ under this generalized metric geometric setting, and then derive the global convergence of the generated sequence. 

\begin{theorem} 
Let $\left\{x^{k}_{(N)}\right\}_{k \in \mathbb{N}}$ be the sequence generated by Algorithm \ref{iGGBPL}. \\ 
(\romannumeral1) 
$J(x^{k+1}_{(N)})\leq J(x^{k}_{(N)}) -\rho \sum_{i=1}^N(d_i^k(x^{k+1}_{i},x^{k}_{i}))^{2}$,    
where $\rho$ is a positive constant.  

(\romannumeral2) 
$\lim_{k \to \infty} d_i^k(x^{k+1}_{i},x^{k}_{i})=0$.

\label{t1}
\end{theorem}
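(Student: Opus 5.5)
The proof splits according to whether the restart test in Algorithm \ref{iGGBPL} is triggered. When the test is not triggered, the accepted iterate satisfies $J(x^{k+1}_{(N)})\le J(x^k_{(N)})-\rho_1\sum_{i=1}^N (d_i^k(x_i^{k+1},x_i^k))^2$ directly by the acceptance criterion. When it is triggered, $\beta^k=0$, so $y^k_{(N)}=x^k_{(N)}$ and the step is a plain GGBPL step. It then suffices to prove a sufficient decrease $J(x^{k+1}_{(N)})\le J(x^k_{(N)})-\rho_2\sum_i (d_i^k(x_i^{k+1},x_i^k))^2$ for GGBPL steps with some $\rho_2>0$. Taking $\rho=\min\{\rho_1,\rho_2\}$ then yields (i).

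For the GGBPL step, I would work block by block. Fix $i$ and let $h_i$ be the partial function with blocks $1,\dots,i-1$ already updated. Since $x_i^{k+1}$ minimizes the objective in (\ref{gprox}), comparing its value with that at the feasible point $x=x_i^k\in\mathbb{D}_{F_i}$ (where $d_i^k(x_i^k,x_i^k)=0$ by Definition \ref{d0}) gives
\begin{equation*}
F_i(x_i^{k+1})+\frac{1}{2\sigma_i^k}(d_i^k(x_i^{k+1},x_i^k))^2+\langle R_{i,k}^{-1}\nabla h_i(x_i^k),x_i^{k+1}-x_i^k\rangle_{i,k}\le F_i(x_i^k).
\end{equation*}
Next, I would apply the generalized descent lemma (the second Proposition \ref{p3}) to $h_i$ with constant $L_{i,k}$. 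This applies because $\mathbb{D}_H$ is the full product space, so segments stay in the section. It gives
\begin{equation*}
h_i(x_i^{k+1})\le h_i(x_i^k)+\langle R_{i,k}^{-1}\nabla h_i(x_i^k),x_i^{k+1}-x_i^k\rangle_{i,k}+\frac{L_{i,k}}{2}(d_i^k(x_i^{k+1},x_i^k))^2.
\end{equation*}
Adding the two inequalities cancels the inner-product terms. Using $\frac{1}{\sigma_i^k}=\gamma_i^k s_i^k$ and $s_i^k>L_{i,k}$, the coefficient of $(d_i^k)^2$ becomes $\frac12(\gamma_i^k s_i^k-L_{i,k})\ge\frac12(\gamma_i^k-1)s_i^k$. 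Because $h_i(x_i^{k+1})+F_i(x_i^{k+1})$ and $h_i(x_i^k)+F_i(x_i^k)$ are $J$ evaluated at consecutive partial iterates, summing over $i=1,\dots,N$ telescopes to $J(x^{k+1}_{(N)})\le J(x^k_{(N)})-\sum_i\frac{(\gamma_i^k-1)s_i^k}{2}(d_i^k(x_i^{k+1},x_i^k))^2$. The standing assumption $\inf_{i,k}(\gamma_i^k-1)s_i^k>0$ then gives $\rho_2:=\frac12\inf_{i,k}(\gamma_i^k-1)s_i^k>0$.

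The main subtlety I expect is the restart branch. One must confirm that a re-updated step (with $\beta^k=0$) is exactly a GGBPL step from $x^k_{(N)}$, so the bound above applies, and that the $L_{i,k}$ used depend only on the actual points involved. A related issue is that Proposition \ref{p3} provides a constant $L_{i,k}$ for each pair of points, but the algorithm picks $s_i^k>L_{i,k}$ adaptively. I would treat $L_{i,k}$ as the constant valid for the pair $(x_i^k,x_i^{k+1})$ realized by the step, as the algorithm's step-size rule implicitly does.

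For (ii), sum (i) over $k=1,\dots,K$ to get $\rho\sum_{k=1}^K\sum_i(d_i^k(x_i^{k+1},x_i^k))^2\le J(x^1_{(N)})-J(x^{K+1}_{(N)})\le J(x^1_{(N)})-\inf J$. This bound is finite because $J$ is bounded below (Assumption \ref{assump1}(iii)) and $x^1_{(N)}\in\operatorname{dom}J$. The series therefore converges, so its terms tend to zero, giving $d_i^k(x_i^{k+1},x_i^k)\to0$ for every $i$.
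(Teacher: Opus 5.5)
Your proposal is correct and follows essentially the same route as the paper. The acceptance test gives the decrease with $\rho_1$. The restart branch is a plain GGBPL step, handled block by block: compare the $Gprox$ objective at $x_i^{k+1}$ with its value at $x_i^k$, add the generalized descent lemma so the inner-product terms cancel, and telescope to the coefficient $\frac12(\gamma_i^k-1)s_i^k$. This gives $\rho=\min\{\rho_1,\frac12\inf_{i,k}(\gamma_i^k-1)s_i^k\}$, and (ii) follows by summing over $k$ and using the lower bound of $J$.

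One small correction: your proposed fix for the pair-dependence of $L_{i,k}$ is circular as stated, since $x_i^{k+1}$ itself depends on $s_i^k>L_{i,k}$. It is also unnecessary. The block-wise global Lipschitz constant of $\nabla_{x_i}h_i$, combined with Assumption~\ref{assump2}(i) and norm equivalence, already gives a single $L_{i,k}$ valid for all pairs of points in the block.
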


\begin{proof}
(\romannumeral1) 
We first define 
\begin{center}
    $h_{j}(x_{j})=H(\{x_{i}^{k+1}\}_{i=1}^{j-1}, x_{j}, \{x_{i}^{k}\}_{i=j+1}^{N})$. 
\end{center}

If $J(x^{k+1}_{(N)}) \leq J(x^{k}_{(N)})-\rho_{1}\sum_{i=1}^N (d_i^k(x^{k+1}_{i},x^{k}_{i}))^2$, then Theorem \ref{t1}(i) is true. If not, from Proposition \ref{p3}, we have
\begin{align}
&&H(x_{1}^{k+1},\{x_{j}^{k}\}_{j=2}^{N})
&\leq H(x^{k}_{(N)})
+\left\langle R_{1,k}^{-1}\nabla_{x_{1}}H(x^{k}_{(N)}),x_{1}^{k+1}-x_{1}^{k}\right\rangle_{1,k}\notag \\
&& &+\frac{L_{1,k}}{2}\left(d_{1}^{k}(x_{1}^{k+1},x_{1}^{k})\right)^{2}.
\label{e331}
\end{align}
From Eq. (\ref{gprox}), we obtain 
\begin{align}
&&F_{1}(x_{1}^{k})
&\geq F_{1}(x_{1}^{k+1})
+\left\langle R_{1,k}^{-1}\nabla_{x_{1}}H(x^{k}_{(N)}),x_{1}^{k+1}-x_{1}^{k}\right\rangle_{1,k} +\frac{1}{2\sigma_{1}^{k}}(d_{1}^{k}(x_{1}^{k+1},x_{1}^{k}))^{2}.
\label{e332}
\end{align}
Then sum of Eq. (\ref{e331}) and Eq. (\ref{e332}), we have 
\begin{align}
&&H(x^{k}_{(N)})+F_{1}(x_{1}^{k})&\geq F_{1}(x_{1}^{k+1})+H(x_{1}^{k+1},\{x_{j}^{k}\}_{j=2}^{N})+(\frac{1}{2\sigma_{1}^{k}}-\frac{L_{1,k}}{2})(d_{1}^{k}(x_{1}^{k+1},x_{1}^{k}))^{2}.
\end{align}

Since $\sigma_j^k=\frac{1}{\gamma_j^k s_j^k}$, $\gamma_j^k>1$, and $s_j^k>L_{j,k}$, we have
$\frac{1}{2\sigma_j^k}-\frac{L_{j,k}}{2}=\frac{\gamma_j^ks_j^k-L_{j,k}}{2}>\frac{(\gamma_j^k-1)s_j^k}{2}$. Assume that the result holds when $i=n$, i.e.,
\begin{align}
&&&J(\{x_{j}^{k+1}\}_{j=1}^{n},\{x_{j}^{k}\}_{j=n+1}^{N})\leq J(x^{k}_{(N)})
-\sum_{j=1}^{n}\frac{(\gamma_j^k-1)s_j^k}{2}
(d_j^k(x_j^{k+1},x_j^k))^2.
\label{e333}
\end{align}

Since Eq. (\ref{gprox}), we also have  
\begin{align}
&&F_{n+1}(x_{n+1}^{k})&\geq F_{n+1}(x_{n+1}^{k+1})+\langle R_{n+1,k}^{-1}\nabla_{x_{n+1}} h_{n+1}(x_{n+1}^{k}),x_{n+1}^{k+1}-x_{n+1}^{k}\rangle_{n+1,k}\notag \\
&& &+\frac{1}{2\sigma_{n+1}^{k}}
(d_{n+1}^{k}(x_{n+1}^{k+1},x_{n+1}^{k}))^2.
\label{e334}
\end{align}

From Proposition \ref{p3}, we infer
\begin{align}
&&H(\{x_{j}^{k+1}\}_{j=1}^{n+1},\{x_{j}^{k}\}_{j=n+2}^{N})&\leq H(\{x_{j}^{k+1}\}_{j=1}^{n},\{x_{j}^{k}\}_{j=n+1}^{N})\notag \\
&&&+\langle R_{n+1,k}^{-1}\nabla_{x_{n+1}} h_{n+1}(x_{n+1}^{k}),x_{n+1}^{k+1}-x_{n+1}^{k}\rangle_{n+1,k} \notag \\
&&  &+\frac{L_{n+1,k}}{2}
(d_{n+1}^{k}(x_{n+1}^{k+1},x_{n+1}^{k}))^2.
\label{e335}
\end{align}

Then,  sum of Eq. (\ref{e333}), Eq. (\ref{e334}) and Eq. (\ref{e335}), when $n+1=N$, we have  
\begin{align}
&&&J(x^{k+1}_{(N)})
\leq
J(x^{k}_{(N)})
-\sum_{j=1}^{N}\frac{(\gamma_j^k-1)s_j^k}{2}
(d_j^k(x_j^{k+1},x_j^k))^2.
\label{c1end2}
\end{align}

Let
\begin{align}
\rho
:=
\min\left\{
\rho_1,\,
\frac{1}{2}
\inf_{i\in[N],\,k\geq1}
(\gamma_i^k-1)s_i^k
\right\}
>0.
\notag
\end{align}
If the inertial update is accepted, the acceptance condition gives
\begin{align}
J(x^{k+1}_{(N)})
\leq
J(x^k_{(N)})
-\rho\sum_{i=1}^N
(d_i^k(x_i^{k+1},x_i^k))^2.
\notag
\end{align}
If the inertial update is rejected, Eq. (\ref{c1end2}) gives the same inequality.


(\romannumeral2) From Theorem \ref{t1}(\romannumeral1), we have
\begin{align}
\rho\sum_{j=1}^{N}
(d_j^k(x_j^{k+1},x_j^k))^2
\leq
J(x^k_{(N)})-J(x^{k+1}_{(N)}).
\notag
\end{align}
Summing both sides from $k=1$ to $\infty$, and since $J$ has a lower bound, we have
\begin{align}
&&\rho\sum_{k=1}^{\infty}\sum_{j=1}^{N}(d_j^k(x_j^{k+1},x_j^k))^2 &\leq
\sum_{k=1}^{\infty}(J(x^{k}_{(N)})-J(x^{k+1}_{(N)})) \notag\\
&&&\leq J(x^{1}_{(N)})-\inf J \notag\\
&&&<\infty.\notag
\end{align}
Therefore, we have
\begin{align}
&&&\lim_{k\to\infty}\sum_{j=1}^{N}\rho(d_j^k(x_j^{k+1},x_j^k))^2=0,\notag
\end{align}
Since each term 
$\rho(d_j^k(x_j^{k+1},x_j^k))^2$ is nonnegative, we have
\begin{align}
&&&\lim_{k\to\infty}
\rho(d_j^k(x_j^{k+1},x_j^k))^2=0,~\forall j\in[N], 
\notag
\end{align}
which implies 
\begin{align}
&&&\lim_{k\to\infty}d_j^k(x_j^{k+1},x_j^k)=0,~ \forall j\in[N].
\notag
\end{align}
By Assumption \ref{assump2}, we have 
\begin{align}
&&&\lim_{k\to\infty}C_d\|x_j^{k+1}-x_j^k\|_d\leq \lim_{k\to\infty}d_j^k(x_j^{k+1},x_j^k), ~ \forall j\in[N].\notag
\end{align}
Thus, we have 
\begin{align}
&&&\lim_{k\to\infty}\|x^{k+1}_{(N)}-x^k_{(N)}\|_d=0.
\notag
\end{align}

\end{proof}

\subsection{Subgradient bound and cluster point characterization}
We next prove Property \ref{based}(\romannumeral2) and (\romannumeral3).

\begin{lemma}
\label{metric_bound}
Suppose that Assumption \ref{assump2}(ii) and (iii) hold and the sequences $\{x_i^{k+1}\}_{k\geq0}$ and $\{y_i^k\}_{k\geq0}$ generated by Algorithm \ref{iGGBPL} are bounded. Let $\underline{\sigma}:=\inf_{i\in[N],~k\geq0}\sigma_i^k>0$. Then, there exists a positive constant $\rho_d$ s.t., $\forall i\in[N]$, $k\geq0$, and $\eta_i^{k+1}\in\partial_1d_i^k(x_i^{k+1},y_i^k)$, we have 
\begin{align}
\|
\frac{d_i^k(x_i^{k+1},y_i^k)}
{\sigma_i^k}
\eta_i^{k+1}
\|
\leq
\rho_d
d_i^k(x_i^{k+1},y_i^k).
\notag
\end{align}
\end{lemma}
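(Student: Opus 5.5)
The bound factors as
\[
\Big\|\frac{d_i^k(x_i^{k+1},y_i^k)}{\sigma_i^k}\eta_i^{k+1}\Big\| = \frac{d_i^k(x_i^{k+1},y_i^k)}{\sigma_i^k}\,\|\eta_i^{k+1}\|,
\]
since $d_i^k\ge 0$. Because $\sigma_i^k\ge\underline{\sigma}>0$, it is enough to show that $\|\eta_i^{k+1}\|\le M$ for some constant $M$ that does not depend on $i$, $k$, or the choice of subgradient. We can then take $\rho_d:=M/\underline{\sigma}$. So the whole task is a uniform bound on subgradients of the convex functions $d_i^k(\cdot,y_i^k)$ (Assumption \ref{assump2}(ii)), taken at the points $x_i^{k+1}$.

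To get that bound, I will use the elementary estimate for convex functions on $\mathbb{R}^{d_i}$ instead of Proposition \ref{p_sub_bound} as a black box. Proposition \ref{p_sub_bound} only gives a constant for each fixed function, and we need a constant that is uniform over the whole family $\mathcal{D}_i$. Write $\phi(\cdot)=d_i^k(\cdot,y_i^k)$. This is a finite, convex function on all of $\mathbb{R}^{d_i}$, so $\partial_1 d_i^k(x_i^{k+1},y_i^k)$ is the convex subdifferential. For $\eta\in\partial\phi(x)$ and any unit vector $u$ in the Euclidean norm, the subgradient inequality with $z=x+u$ gives
\[
\langle \eta,u\rangle\le \phi(x+u)-\phi(x)\le \phi(x+u).
\]
The second step uses $\phi\ge 0$. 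Choosing $u=\eta/\|\eta\|$ yields $\|\eta\|\le \sup_{\|u\|\le 1}\phi(x+u)$.

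Next, boundedness of the iterates makes this supremum uniform. Let $K_1$ be a compact set containing every $x_i^{k+1}$ (over all $i,k$), and let $K_2$ be a compact set containing every $y_i^k$. Set $\tilde K_1:=K_1+\bar B(0,1)$, which is also compact. Then
\[
\|\eta_i^{k+1}\|\le \sup_{(x,y)\in\tilde K_1\times K_2} d_i^k(x,y)\le \sup_{(x,y)\in\tilde K_1\times K_2}\omega_i(x,y)=:M_i,
\]
which is finite by Assumption \ref{assump2}(iii), because $\tilde K_1\times K_2$ is a compact subset of $\mathbb{R}^{d_i}\times\mathbb{R}^{d_i}$. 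Taking $M:=\max_{i\in[N]}M_i$ is legitimate since there are only finitely many blocks, and this closes the argument.

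The only delicate step is this uniformity over $k$: the metric $d_i^k$ changes with the iteration, so no single fixed function can control all the subgradients. The upper envelope $\omega_i$ in Assumption \ref{assump2}(iii) is exactly what supplies the uniform constant. The unit-ball enlargement $\tilde K_1$ is needed so that the points $x+u$ used in the subgradient inequality still lie in a compact set where $\omega_i$ is bounded. The subdifferential also needs to be interpreted correctly: $\partial_1$ means the subdifferential in the first argument. For a finite convex function the limiting and convex subdifferentials coincide, so the subgradient inequality above is valid.
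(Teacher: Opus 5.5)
Your argument is correct and is essentially the paper's proof: the subgradient inequality at a unit-shifted point, the nonnegativity of $d_i^k$, and the upper envelope $\omega_i$ bounded on an enlarged compact set give a uniform bound on $\eta_i^{k+1}$. Dividing by $\underline{\sigma}$ and maximizing over the finitely many blocks then yields $\rho_d$.

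The only real difference is that you test with the Euclidean unit vector $u=\eta/\|\eta\|$, which bounds $\|\eta_i^{k+1}\|$ directly. This avoids the paper's detour through the dual norm $\|\cdot\|_{d,*}$ and the norm-equivalence constant $\kappa_i$. One small fix: choose the compact sets $K_1,K_2$ separately for each block, since the blocks live in different spaces $\mathbb{R}^{d_i}$.
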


\begin{proof}
Fix $i\in[N]$. Since $\{x_i^{k+1}\}_{k\geq0}$ and $\{y_i^k\}_{k\geq0}$ are bounded, there exists $R_i>0$ s.t.
\begin{align}
\|x_i^{k+1}\|_d
\leq
R_i,
\qquad
\|y_i^k\|_d
\leq
R_i,
\quad
\forall k\geq0.
\notag
\end{align}
Define the compact sets
\begin{align}
K_i
&:=
\{u\in\mathbb{R}^{d_i}:\|u\|_d\leq R_i+1\},
\notag\\
\widehat{K}_i
&:=
\{y\in\mathbb{R}^{d_i}:\|y\|_d\leq R_i\}.
\notag
\end{align}
By Assumption \ref{assump2}(iii),
\begin{align}
M_i
:=
\sup_{\substack{u\in K_i~y\in\widehat{K}_i}}
\omega_i(u,y)
=
\sup_{\substack{d\in\mathcal{D}_i\\u\in K_i,~y\in\widehat{K}_i}}
d(u,y)
<
+\infty.
\notag
\end{align}

Let $\eta_i^{k+1}\in\partial_1d_i^k(x_i^{k+1},y_i^k)$ and take any $v\in\mathbb{R}^{d_i}$ satisfying $\|v\|_d\leq1$. By Assumption \ref{assump2}(ii),
\begin{align}
d_i^k(x_i^{k+1}+v,y_i^k)
&\geq
d_i^k(x_i^{k+1},y_i^k)
+
\langle\eta_i^{k+1},v\rangle,
\notag\\
d_i^k(x_i^{k+1}-v,y_i^k)
&\geq
d_i^k(x_i^{k+1},y_i^k)
-
\langle\eta_i^{k+1},v\rangle.
\notag
\end{align}
Since $x_i^{k+1}+v,x_i^{k+1}-v\in K_i$ and $y_i^k\in\widehat{K}_i$, we have
\begin{align}
d_i^k(x_i^{k+1}+v,y_i^k)
\leq
M_i,
\qquad
d_i^k(x_i^{k+1}-v,y_i^k)
\leq
M_i.
\notag
\end{align}
Moreover, $d_i^k(x_i^{k+1},y_i^k)\geq0$. Combining the above inequalities gives
\begin{align}
-M_i
\leq
\langle\eta_i^{k+1},v\rangle
\leq
M_i,
\quad
\forall\|v\|_d\leq1.
\notag
\end{align}
Therefore,
\begin{align}
\|\eta_i^{k+1}\|_{d,*}
=
\sup_{\|v\|_d\leq1}
|\langle\eta_i^{k+1},v\rangle|
\leq
M_i,
\quad
\forall k\geq0.
\notag
\end{align}

Since $\mathbb{R}^{d_i}$ is finite-dimensional, there exists $\kappa_i>0$ s.t.
\begin{align}
\|z\|
\leq
\kappa_i\|z\|_{d,*},
\quad
\forall z\in\mathbb{R}^{d_i}.
\notag
\end{align}
It follows that
\begin{align}
\|
\frac{d_i^k(x_i^{k+1},y_i^k)}
{\sigma_i^k}
\eta_i^{k+1}
\|
&\leq
\frac{\kappa_i}{\sigma_i^k}
d_i^k(x_i^{k+1},y_i^k)
\|\eta_i^{k+1}\|_{d,*}
\notag\\
&\leq
\frac{\kappa_iM_i}{\underline{\sigma}}
d_i^k(x_i^{k+1},y_i^k).
\notag
\end{align}
Since $N$ is finite, define
\begin{align}
\rho_d
:=
\max_{i\in[N]}
\frac{\kappa_iM_i}{\underline{\sigma}}.
\notag
\end{align}
Then,
\begin{align}
\|
\frac{d_i^k(x_i^{k+1},y_i^k)}
{\sigma_i^k}
\eta_i^{k+1}
\|
\leq
\rho_d
d_i^k(x_i^{k+1},y_i^k),
\quad
\forall i\in[N],\quad
\forall k\geq0.
\notag
\end{align}
\end{proof}

\begin{theorem}
In Algorithm \ref{iGGBPL}, if we define
\begin{align}
 && &p_{x^{k+1}}=\{\nabla_{x_{j}}H (x^{k+1}_{(N)})-\nabla_{x_{j}}h_j (y^{k}_{j})-\frac{d_j^k(x^{k+1}_{j},y^{k}_{j})}{\sigma^{k}_{j}}\eta_j^{k+1}\}_{j=1}^N, 
\notag
\end{align}
where $\eta_j^{k+1} \in \partial_1 d_j^k(x^{k+1}_{j},y^{k}_{j})$, then we have $p_{x^{k+1}} \in \partial J(x^{k+1}_{(N)})$ and
\begin{align}
      \|p_{x^{k+1}} \|\leq \rho_{b}\sum_{i=1}^N (d_i^k(x^{k+1}_{i},y^{k}_{i})+d_i^k(x^{k+1}_{i},x^{k}_{i})), \label{e35}
\end{align}   
where $\rho_{b}$ is a positive constant.  
\label{t2}
\end{theorem}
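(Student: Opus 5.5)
My plan has two parts: membership of $p_{x^{k+1}}$ in $\partial J(x^{k+1}_{(N)})$ via the first-order optimality condition of each block subproblem, and the norm bound via Lemma \ref{metric_bound} plus Lipschitz continuity of $\nabla H$ on a bounded set.

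\textbf{Membership.} Fix $j\in[N]$. The point $x_j^{k+1}$ minimizes
\[
\Phi_j(x)=F_j(x)+\frac{1}{2\sigma_j^k}\bigl(d_j^k(x,y_j^k)\bigr)^2+\langle R_{j,k}^{-1}\nabla_{x_j}h_j(y_j^k),x-y_j^k\rangle_{j,k}.
\]
By Proposition \ref{p1}, $0\in\partial\Phi_j(x_j^{k+1})$. The linear term has Euclidean gradient $\nabla_{x_j}h_j(y_j^k)$, since $\langle R^{-1}g,v\rangle_{j,k}=\langle g,v\rangle$. By Assumption \ref{assump2}(ii), $d_j^k(\cdot,y_j^k)$ is convex and finite, hence locally Lipschitz. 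By the convex chain rule for the square of a nonnegative convex function, its subdifferential at $x_j^{k+1}$ is $\frac{1}{\sigma_j^k}d_j^k(x_j^{k+1},y_j^k)\,\partial_1 d_j^k(x_j^{k+1},y_j^k)$.

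Proposition \ref{p2} only covers smooth perturbations, so I would add a sum rule for a proper lsc function plus a locally Lipschitz convex function. This is the standard limiting-subdifferential sum rule, which gives an inclusion. It yields some $\eta_j^{k+1}\in\partial_1 d_j^k(x_j^{k+1},y_j^k)$ with
\[
-\nabla_{x_j}h_j(y_j^k)-\frac{d_j^k(x_j^{k+1},y_j^k)}{\sigma_j^k}\eta_j^{k+1}\in\partial F_j(x_j^{k+1}),
\]
which is Property \ref{based}(ii). The statement should be read with this particular $\eta_j^{k+1}$.

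Since $J=H+\sum_j F_j$ is separable in the nonsmooth part, Proposition \ref{p2} together with the product rule for separable sums gives
\[
\partial J(x^{k+1}_{(N)})=\nabla H(x^{k+1}_{(N)})+\prod_j\partial F_j(x_j^{k+1}).
\]
Adding $\nabla_{x_j}H(x^{k+1}_{(N)})$ blockwise gives $p_{x^{k+1}}\in\partial J(x^{k+1}_{(N)})$.

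\textbf{Bound.} Theorem \ref{t1}(i) keeps $J(x^k_{(N)})\le J(x^1_{(N)})$. Coercivity (Assumption \ref{assump1}(iii)) then makes $\{x^k_{(N)}\}$ bounded. The sequence $y^k$ is bounded too, because $\beta^k\in[0,1]$ and $y^k=x^k+\beta^k(x^k-x^{k-1})$. So all points $(x^{k+1}_{1..j-1},y_j^k,x^k_{j+1..N})$ and $x^{k+1}_{(N)}$ lie in one bounded set $B$.

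By Definition \ref{d3}, $\nabla H$ is $L_{\nabla H}$-Lipschitz on $B$. Therefore
\[
\|\nabla_{x_j}H(x^{k+1}_{(N)})-\nabla_{x_j}h_j(y_j^k)\|\le L_{\nabla H}\Bigl(\|x_j^{k+1}-y_j^k\|+\sum_{i>j}\|x_i^{k+1}-x_i^k\|\Bigr).
\]
Proposition \ref{p_norm_equiv} gives $\|\cdot\|\le c\|\cdot\|_d$, and Assumption \ref{assump2}(i) gives $\|u-v\|_d\le d_i^k(u,v)/C_d$. Together these convert each Euclidean difference into $d_i^k(x_i^{k+1},y_i^k)$ or $d_i^k(x_i^{k+1},x_i^k)$.

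The metric-subgradient term is bounded by $\rho_d\, d_j^k(x_j^{k+1},y_j^k)$ through Lemma \ref{metric_bound}, whose boundedness hypotheses were just verified. Summing over $j$ and using $\|p\|\le\sum_j\|p_j\|$ gives \eqref{e35} with
\[
\rho_b=\frac{N c L_{\nabla H}}{C_d}+\rho_d .
\]

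\textbf{Main obstacle.} I expect the subdifferential calculus step to be the hardest: justifying the sum rule for $F_j$ plus a nonsmooth convex squared metric in the limiting sense, and obtaining the stated form of $\partial\frac12 d^2$. I would first try the fuzzy/limiting sum rule for Lipschitz plus lsc functions, combined with the convex chain rule $\partial(\tfrac12 g^2)=g\,\partial g$ for convex $g\ge0$.
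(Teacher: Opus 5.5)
Your proof is correct and follows essentially the same route as the paper. Both arguments take the first-order optimality condition of the block subproblem with $\partial\bigl(\tfrac{1}{2\sigma_j^k}(d_j^k)^2\bigr)=\tfrac{1}{\sigma_j^k}d_j^k\,\partial_1 d_j^k$, then use Lipschitz continuity of $\nabla H$ on a bounded set containing the iterates, norm equivalence with Assumption~\ref{assump2}(i), and Lemma~\ref{metric_bound} for the metric-subgradient term.

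Your version is somewhat more careful than the paper's in two places. The paper invokes Proposition~\ref{p2}, which is stated only for smooth perturbations, to write an equality for the nonsmooth squared metric, whereas you use the limiting sum rule for a locally Lipschitz perturbation, which gives the needed inclusion. The paper also leaves implicit, in its ``similarly'' step, the cross-block differences $\sum_{i>j}\|x_i^{k+1}-x_i^k\|$ that produce the $d_i^k(x_i^{k+1},x_i^k)$ terms in the bound, which you make explicit.
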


\begin{proof}
By Proposition \ref{p_metric}, Proposition \ref{p1} and Proposition \ref{p2}, Eq. (\ref{gprox}) follows that 
\begin{align}
&&0&\in \partial_{x_i}F_i(x_i^{k+1})+\nabla_{x_i}H(\{x_j^{k+1}\}_{j=1}^{i-1},y_i^k,\{x_j^k\}_{j=i+1}^{N})+\frac{1}{2\sigma_i^k}\partial_{1}(d_i^k(x_i^{k+1},y_i^k))^2 \notag \\
&& &=\partial_{x_i}F_i(x_i^{k+1})+\nabla_{x_i}H(\{x_j^{k+1}\}_{j=1}^{i-1},y_i^k,\{x_j^k\}_{j=i+1}^{N})+\frac{1}{\sigma_i^k}d_i^k(x_i^{k+1},y_i^k)\partial_{1}(d_i^k(x_i^{k+1},y_i^k)). 
\label{ec23}
\end{align}
Thus, there exists $\eta_i^{k+1}\in \partial_{1}d_i^k(x_i^{k+1},y_i^k)$ such that
\begin{align}
&&&-\nabla_{x_i}H(\{x_j^{k+1}\}_{j=1}^{i-1},y_i^k,\{x_j^k\}_{j=i+1}^{N})-\frac{d_i^k(x_i^{k+1},y_i^k)}{\sigma_i^k}\eta_i^{k+1}
\in \partial_{x_i}F_i(x_i^{k+1}).\notag
\end{align}
From that, we can get that 
\begin{align}
&&&\nabla_{x_i}H(\{x_j^{k+1}\}_{j=1}^{i-1},x_i^{k+1},\{x_j^k\}_{j=i+1}^{N})-\nabla_{x_i}H(\{x_j^{k+1}\}_{j=1}^{i-1},y_i^k,\{x_j^k\}_{j=i+1}^{N})-\frac{d_i^k(x_i^{k+1},y_i^k)}{\sigma_i^k}\eta_i^{k+1}\notag \\
&& &\in \partial_{x_i}F_i(x_i^{k+1})+\nabla_{x_i}H(\{x_j^{k+1}\}_{j=1}^{i-1},x_i^{k+1},\{x_j^k\}_{j=i+1}^{N}). \label{ec241}
\end{align}
This implies 
\begin{center}
$p_{x_{j}^{k+1}} \in \partial_{x_{j}}J(\{x_{i}^{k+1}\}_{i=1}^{j-1},x_{j}^{k+1}, \{x_{i}^{k}\}_{i=j+1}^{N}). $ 
\end{center}

From Eq. (\ref{ec23}), Assumption \ref{assump2},  Proposition \ref{p_norm_equiv}, Proposition \ref{p_sub_bound} and Lemma \ref{metric_bound}, $\exists \rho_d>0$ and $\exists L_{\nabla H}>0$ s.t.
\begin{align}
&& \|p_{x_{j}^{k+1}}\|
&=\|\nabla_{x_{j}}h_j(x^{k+1}_{j})-\nabla_{x_{j}}h_j(y^{k}_{j})-\frac{d_j^k(x_j^{k+1},y_j^k)}{\sigma_j^k}\eta_j^{k+1}\| \notag \\ 
&& &\leq \|\nabla_{x_{j}}h_j(x^{k+1}_{j})-\nabla_{x_{j}}h_j(y^{k}_{j})\|
+\|\frac{d_j^k(x_j^{k+1},y_j^k)}{\sigma_j^k}\eta_j^{k+1}\| \notag \\
&& &\leq L_{\nabla H}\|x_j^{k+1}-y_j^k\|
+\|\frac{d_j^k(x_j^{k+1},y_j^k)}{\sigma_j^k}\eta_j^{k+1}\| \notag \\
&& &\leq C_1L_{\nabla H}\|x_j^{k+1}-y_j^k\|_d
+\|\frac{d_j^k(x_j^{k+1},y_j^k)}{\sigma_j^k}\eta_j^{k+1}\| \notag \\
&& &\leq \frac{C_1L_{\nabla H}}{C_d}d_j^k(x_j^{k+1},y_j^k)
+\|\frac{d_j^k(x_j^{k+1},y_j^k)}{\sigma_j^k}\eta_j^{k+1}\| \notag \\
&& &\leq \frac{C_1L_{\nabla H}}{C_d}d_j^k(x_j^{k+1},y_j^k)
+\rho_d d_j^k(x_j^{k+1},y_j^k) \notag \\
&& &\leq \left(\frac{C_1L_{\nabla H}}{C_d}+\rho_d\right)
d_j^k(x_j^{k+1},y_j^k).
\label{ec24}
\end{align}
where $C_1>0$. Since the generated sequence is contained in a bounded level set (Assumption \ref{assump1} and Definition \ref{dlower}) and the step-size related quantities are uniformly bounded, there exists a positive constant $\rho_{b1}$ such that 
\begin{align}
&&&\frac{C_1L_{\nabla H}}{C_d}+\rho_d \leq \rho_{b1},~\forall i\in[N],~k\geq0.
\notag
\end{align}
From Eq. (\ref{ec24}), Assumption \ref{assump2}, we have
\begin{align}
&&\|\{p_{i}^{k+1} \}^{N}_{i=1}\|&\leq  \rho_{b1}\sum_{i=1}^N d_i^k(x^{k+1}_{i},y^{k}_{i}), 
\label{c32}
\end{align}
where $\rho_{b1}:=\frac{C_1L_{\nabla H}}{C_d}+\rho_d$.
Similarly, from Eq. (\ref{ec241}), we also have

\begin{align}
&&&\nabla_{x_i}H(\{x_j^{k+1}\}_{j=1}^{N})-\nabla_{x_i}H(\{x_j^{k+1}\}_{j=1}^{i-1},y_i^k,\{x_j^k\}_{j=i+1}^{N})-\frac{d_i^k(x_i^{k+1},y_i^k)}{\sigma_i^k}\eta_i^{k+1}\notag \\
&&&\in \partial_{x_i}F_i(x_i^{k+1})+\nabla_{x_i}H(\{x_j^{k+1}\}_{j=1}^{N}). \notag \\
&& &= \partial_{x_i}\sum_{j=1}^N F_j(x_j^{k+1})+\nabla_{x_i}H(\{x_j^{k+1}\}_{j=1}^{N}). \notag \\
&& &= \partial_{x_i}J(\{x_j^{k+1}\}_{j=1}^{N}). \label{ec242}
\end{align}
We also define 
\begin{align}
    &&p^{k+1}&=\{\nabla_{x_i}H(\{x_j^{k+1}\}_{j=1}^{N})-\nabla_{x_i}H(\{x_j^{k+1}\}_{j=1}^{i-1},y_i^k,\{x_j^k\}_{j=i+1}^{N})-\frac{d_i^k(x_i^{k+1},y_i^k)}{\sigma_i^k}\eta_i^{k+1}\}_{i=1}^N \notag \\
    && &\in \partial J(\{x_j^{k+1}\}_{j=1}^{N}). \label{pk}
\end{align}
 Thus, from Assumption \ref{assump1}, Eq. (\ref{ec241}), Eq. (\ref{ec242}) and Eq. (\ref{pk}), we have 
\begin{align}
\|p^{k+1}\|
\leq
\rho_b\sum_{i=1}^{N}
\left(
d_i^k(x_i^{k+1},y_i^k)+d_i^k(x_i^{k+1},x_i^k)
\right).
\notag
\end{align}
where $\rho_b$ is a positive constant. 
\end{proof}

Let $z^{k}=\left\{x^{k}_{i} \right\}^{N}_{i=1}$. Next, we prove the following lemma. 
\begin{lemma}
Let $z'$ denote the set of cluster points of $\{z^k\}_{k\in\mathbb N}$. Then $z'$ is nonempty and compact. \label{t3}
\end{lemma}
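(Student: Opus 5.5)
The plan is to show that $\{z^k\}_{k\in\mathbb N}$ is bounded, get nonemptiness from Bolzano--Weierstrass, and prove closedness of the cluster set by a diagonal argument. Compactness then follows from Proposition \ref{dcompact}.

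\textbf{Boundedness.} By Theorem \ref{t1}(\romannumeral1), for every $k$ we have $J(z^{k+1})\leq J(z^k)$, so $J(z^k)\leq J(z^1)<\infty$, using $z^1\in\mathrm{dom}~J$. Hence the whole sequence lies in the sublevel set $\{z: J(z)\leq J(z^1)\}$. By Assumption \ref{assump1}(iii), $J$ is coercive, so by Definition \ref{dlower} this sublevel set is bounded in the Euclidean norm. Thus $\{z^k\}$ is bounded. The same argument applies to the inertial points $y^k$, since $y^k=z^k+\beta^k(z^k-z^{k-1})$ with $\beta^k\in[0,1]$. This is the boundedness hypothesis already used in Lemma \ref{metric_bound}.

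\textbf{Nonemptiness.} The sequence is bounded in the finite-dimensional space $\prod_i\mathbb R^{d_i}$. Its closure is therefore bounded and closed, hence compact by Proposition \ref{dcompact}. So $\{z^k\}$ has a convergent subsequence, and $z'\neq\emptyset$.

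\textbf{Compactness.} First, $z'$ is bounded, since every cluster point lies in the closure of the bounded set $\{z^k\}$.

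Second, $z'$ is closed. Take $\bar z^m\in z'$ with $\bar z^m\to\bar z$.
\begin{itemize}
\item For each $m$, pick an index $k_m>k_{m-1}$ with $\|z^{k_m}-\bar z^m\|<1/m$; this is possible because $\bar z^m$ is a cluster point.
\item Then $\|z^{k_m}-\bar z\|\leq 1/m+\|\bar z^m-\bar z\|\to0$.
\item Hence $\bar z\in z'$.
\end{itemize}
Applying Proposition \ref{dcompact} again, $z'$ is compact.

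By Proposition \ref{p_norm_equiv}, it does not matter whether closeness is measured in the Euclidean norm or in $\|\cdot\|_d$. So the conclusion holds in the metric setting of the paper.

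\textbf{Main obstacle.} The only delicate point is the boundedness of the iterates when an inertial step is accepted. This needs the monotonicity $J(z^{k+1})\leq J(z^k)$ to hold in both the accepted and the rejected branches of Algorithm \ref{iGGBPL}. Theorem \ref{t1}(\romannumeral1) gives exactly this, so no further difficulty should arise.
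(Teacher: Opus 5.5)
Your proposal is correct and follows essentially the same route as the paper. Monotonicity of $J(z^k)$ from Theorem~\ref{t1}(\romannumeral1), combined with coercivity, bounds the iterates and so gives $z'\neq\emptyset$; a diagonal extraction with strictly increasing indices $k_m$ then shows $z'$ is closed, and hence compact in finite dimensions (your remarks on the inertial points $y^k$ and on norm equivalence are harmless but not needed for this lemma).
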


\begin{proof}
From Theorem \ref{t1}(\romannumeral1), the sequence $\{J(z^k)\}_{k\in\mathbb N}$ is monotonically decreasing. Hence,
\begin{align}
&&&J(z^k)\leq J(z^1),~\forall k\in\mathbb N. \notag
\end{align}
Therefore, the generated set $A=\{z^k:k\in\mathbb N\}$ is contained in the sublevel set
\begin{align}
&&&A\subseteq \mathcal{L}(z^1):=\{z\in\mathrm{dom}~J:J(z)\leq J(z^1)\}. \notag
\end{align}
Since $J$ is coercive by Assumption \ref{assump1}, the sublevel set $\mathcal{L}(z^1)$ is bounded. Thus $A$ is bounded. Thus, $\{z^k\}_{k\in\mathbb N}$ is bounded, $z'$ is nonempty and bounded.

Next, we prove that $z'$ is closed. Let
$\{u^m\}_{m\in\mathbb N}\subseteq z'$ satisfy
$u^m\rightarrow u$. Since $u^m\in z'$, for every
$m\in\mathbb N$, there exists an integer
$k_m>k_{m-1}$ such that
\begin{align}
\|z^{k_m}-u^m\|_d<\frac{1}{m}.
\notag
\end{align}
Therefore,
\begin{align}
\|z^{k_m}-u\|_d
&\leq
\|z^{k_m}-u^m\|_d+\|u^m-u\|_d
\longrightarrow0.
\notag
\end{align}
Hence, $u\in z'$, and thus $z'$ is closed.
Since $z'$ is bounded and closed in a finite-dimensional
normed space, it is compact. 
\end{proof}

Next, before continuing with the proof, we first prove the following lemma. 
\begin{lemma}
If $d$ is a metric and satisfies Assumption \ref{assump2}, then $d(x+z,y+z)=d(x,y)$ and $d(ax,ay)=|a|d(x,y)$ $(\forall a\in \mathbb{R},~\forall x,y,z \in \mathbb{R}^n)$. 
\label{lemma1}
\end{lemma}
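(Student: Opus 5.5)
The natural route is to show that every admissible metric is in fact induced by a norm, i.e.\ $d(x,y)=N(x-y)$ for some norm $N$ on $\mathbb{R}^n$. Both identities are then immediate: $N((x+z)-(y+z))=N(x-y)$, and $N(ax-ay)=|a|N(x-y)$. So I would not prove translation invariance and absolute homogeneity separately, but reduce everything to this structural claim.

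\textbf{Step 1: local norm candidate.} Fix the base point $y$ and set $g_y(x):=d(x,y)$. By Assumption \ref{assump2}(ii), $g_y$ is convex. By Definition \ref{d0}, $g_y\geq 0$ and $g_y(y)=0$, and by Assumption \ref{assump2}(iii) it is finite everywhere. Restrict $g_y$ to a ray $x=y+tv$, $t\ge 0$. Convexity together with $g_y(y)=0$ gives $g_y(y+tv)\le t\,g_y(y+v)$ for $t\in[0,1]$ and $g_y(y+tv)\ge t\,g_y(y+v)$ for $t\ge1$, so $t\mapsto g_y(y+tv)/t$ is nondecreasing. Assumption \ref{assump2}(i) bounds this ratio below by $C_d\|v\|_d$. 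The next step is to upgrade this monotone ratio to exact positive homogeneity along rays, which would make $g_y(y+\cdot)$ a gauge $N_y$; symmetry of $d$ would then make $N_y$ even.

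\textbf{Step 2: independence of the base point.} One would then want $N_y\equiv N_0$, which is exactly translation invariance. The obvious tool is to combine symmetry, $d(x,y)=d(y,x)$, with convexity in each argument separately (convexity in the second argument follows from symmetry). The triangle inequality then controls $N_y-N_{y'}$.

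\textbf{Main obstacle.} I do not see how to close either step. Neither the conversion of the monotone ratio in Step 1 into genuine homogeneity, nor the base-point independence in Step 2, follows from convexity, symmetry, the triangle inequality and the norm lower bound alone. I therefore suspect the statement needs an extra hypothesis. Before writing a proof, I would first look for a counterexample on $\mathbb{R}$.
\begin{itemize}
\item \textbf{Search space.} Look for a metric convex in one argument that is not translation invariant, such as $|x-y|+|\varphi(x)-\varphi(y)|$. Here $\varphi$ must be chosen so that $|\varphi(\cdot)-\varphi(y)|+|\cdot-y|$ remains convex on both sides of $y$.
\item \textbf{Candidates tried.} The choices $\varphi=\exp$ and $\varphi(x)=x^3$ fail, because the resulting function becomes concave on one side of $y$.
\end{itemize}
If no counterexample appears, the fallback is to make explicit the intended hypothesis that $\mathcal{D}_i$ consists of norm-induced metrics, for example $d_i^k(x,y)=\|x-y\|_{i,k}$. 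This is consistent with the norm lower bound in Assumption \ref{assump2}(i) and with how the metrics pair with the inner products $\langle\cdot,\cdot\rangle_{i,k}$ in \eqref{gprox}. Under that hypothesis the lemma is the one-line computation given at the start.
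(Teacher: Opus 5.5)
Your proposal does not prove the lemma. Steps 1 and 2 are both left open, and your suspicion that an extra hypothesis is needed is mistaken. The lemma holds as stated, using only the metric axioms and the convexity of $d(\cdot,y)$ from Assumption \ref{assump2}(ii); items (i) and (iii) play no role. Your fallback of postulating that the metrics are norm-induced simply assumes the conclusion, since translation invariance plus absolute homogeneity is exactly the statement that $d(x,y)=d(x-y,0)$ with $d(\cdot,0)$ a norm.

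The idea missing from Step 1 is to pair the triangle inequality with convexity in the \emph{second} argument, which you already observed follows from symmetry. For $t\in[0,1]$ write $y+tv=t(y+v)+(1-t)y$. Convexity of $d(y+v,\cdot)$ gives $d(y+v,y+tv)\le t\,d(y+v,y+v)+(1-t)\,d(y+v,y)=(1-t)\,d(y+v,y)$. Then $d(y+v,y)\le d(y+v,y+tv)+d(y+tv,y)$ yields $t\,d(y+v,y)\le d(y+tv,y)$. Combined with your bound $d(y+tv,y)\le t\,d(y+v,y)$, this is equality on $[0,1]$, and applying it to $1/t$ and $tv$ extends it to every $t\ge 0$. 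So your ratio $g_y(y+tv)/t$ is constant, not merely nondecreasing. This exact linearity along rays is what your candidates $\exp$ and $x^3$ violate. Moreover, once the lemma is proved, $d(x,y)=c\,|x-y|$ on $\mathbb{R}$, so the family $|x-y|+|\varphi(x)-\varphi(y)|$ contains no counterexample with non-affine $\varphi$.

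For Step 2 you do not need to compare the gauges $N_y$ and $N_{y'}$ directly. Instead, use ray homogeneity at several base points and send a scale parameter to infinity. For $\alpha>0$ and any $u$,
\[
d(x,y)=\frac{1}{\alpha}\,d(y+\alpha(x-y),y)\le\frac{1}{\alpha}\,d(y+\alpha(x-y),y+u)+\frac{1}{\alpha}\,d(y+u,y).
\]
Rescaling about the base point $y+u$ turns the first term on the right into $d(x+u-\frac{u}{\alpha},y+u)$. By the triangle inequality through $x+u$ and rescaling about $x+u$, this is at most $\frac{1}{\alpha}\,d(x,x+u)+d(x+u,y+u)$. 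Letting $\alpha\to\infty$ gives $d(x,y)\le d(x+u,y+u)$, and applying this to $(x+u,y+u,-u)$ gives equality. Then $d(ax,ay)=a\,d(ay+(x-y),ay)=a\,d(x,y)$ for $a\ge 0$. For $a<0$, use $d(-x,-y)=d(y,x)$ (translate by $x+y$) together with symmetry.

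Your reduction to ``$d$ is norm-induced'' is the right target, but without these two arguments neither identity of the lemma is established.
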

\begin{proof}
$\forall i\in[N]$, $k\geq0$ and $d_i^k(\cdot,\cdot)\in\mathcal{D}_i$, let $d=d_i^k$, since $d$ is a metric and Definition \ref{d0}, we have
\begin{center}
$d(x,y)=d(y,x)$.
\end{center}
Moreover, Assumption \ref{assump2} shows that $d(\cdot,y)$ is convex for every fixed $y$. Thus, from the symmetry of $d$, $\forall \lambda\in[0,1]$, we have
\begin{align}
&&d(x,\lambda y+(1-\lambda)z)
&=d(\lambda y+(1-\lambda)z,x) \notag\\
&&&\leq\lambda d(y,x)+(1-\lambda)d(z,x) \notag\\
&&&=\lambda d(x,y)+(1-\lambda)d(x,z).
\notag
\end{align}
Therefore, $d$ is convex with respect to both arguments.

We first prove the homogeneity of $d$. For any $\lambda\in(0,1)$, from the convexity of $d$ with respect to the first argument, we have
\begin{align}
&&d(x+\lambda u,x)
&=d(\lambda(x+u)+(1-\lambda)x,x) \notag\\
&&&\leq\lambda d(x+u,x)+(1-\lambda)d(x,x) \notag\\
&&&=\lambda d(x+u,x).
\label{lemma_homo1}
\end{align}
On the other hand, from Definition \ref{d0} and the convexity of $d$ with respect to the second argument, we have
\begin{align}
&&d(x+u,x)-d(x+\lambda u,x)
&\leq d(x+u,x+\lambda u) \notag\\
&&&=d(x+u,\lambda(x+u)+(1-\lambda)x) \notag\\
&&&\leq\lambda d(x+u,x+u)+(1-\lambda)d(x+u,x) \notag\\
&&&=(1-\lambda)d(x+u,x).
\label{lemma_homo2}
\end{align}
Eq. (\ref{lemma_homo2}) implies
\begin{align}
&&&\lambda d(x+u,x)\leq d(x+\lambda u,x).
\label{lemma_homo3}
\end{align}
Thus, from Eq. (\ref{lemma_homo1}) and Eq. (\ref{lemma_homo3}), we obtain
\begin{align}
&&&d(x+\lambda u,x)=\lambda d(x+u,x),~\forall\lambda\in[0,1].
\label{lemma_homo4}
\end{align}
For $\lambda>1$, applying Eq. (\ref{lemma_homo4}) to $\frac{1}{\lambda}$ and $\lambda u$, we have
\begin{align}
&&d(x+u,x)
&=d(x+\frac{1}{\lambda}(\lambda u),x) \notag\\
&&&=\frac{1}{\lambda}d(x+\lambda u,x).
\notag
\end{align}
Therefore,
\begin{align}
&&&d(x+\lambda u,x)=\lambda d(x+u,x),~\forall\lambda\geq0.
\label{lemma_homo5}
\end{align}

Next, we prove the translation invariance of $d$. For any $\alpha>0$, from Eq. (\ref{lemma_homo5}) and Definition \ref{d0}, we have
\begin{align}
&&d(x,y)
&=\frac{1}{\alpha}d(y+\alpha(x-y),y) \notag\\
&&&\leq\frac{1}{\alpha}d(y+\alpha(x-y),y+u)
+\frac{1}{\alpha}d(y+u,y) \notag\\
&&&=d(x+u-\frac{u}{\alpha},y+u)
+\frac{1}{\alpha}d(y+u,y) \notag\\
&&&\leq d(x+u-\frac{u}{\alpha},x+u)
+d(x+u,y+u)
+\frac{1}{\alpha}d(y+u,y) \notag\\
&&&=\frac{1}{\alpha}d(x,x+u)
+d(x+u,y+u)
+\frac{1}{\alpha}d(y+u,y).
\label{lemma_trans1}
\end{align}
Letting $\alpha\rightarrow\infty$ in Eq. (\ref{lemma_trans1}), we obtain
\begin{align}
&&&d(x,y)\leq d(x+u,y+u).
\label{lemma_trans2}
\end{align}
Replacing $x$, $y$ and $u$ in Eq. (\ref{lemma_trans2}) with $x+u$, $y+u$ and $-u$, respectively, we also obtain
\begin{align}
&&&d(x+u,y+u)\leq d(x,y).
\label{lemma_trans3}
\end{align}
Therefore,
\begin{align}
&&&d(x+u,y+u)=d(x,y).
\label{lemma_trans4}
\end{align}

Finally, for any $a\geq0$, from Eq. (\ref{lemma_homo5}) and Eq. (\ref{lemma_trans4}), we have
\begin{align}
&&d(ax,ay)
&=d(ay+a(x-y),ay) \notag\\
&&&=a d(ay+x-y,ay) \notag\\
&&&=a d(x,y).
\label{lemma_homo6}
\end{align}
For any $a<0$, since $-a>0$, Eq. (\ref{lemma_homo6}), the translation invariance and the symmetry of $d$ imply
\begin{align}
&&d(ax,ay)
&=d((-a)(-x),(-a)(-y)) \notag\\
&&&=(-a)d(-x,-y) \notag\\
&&&=(-a)d(y,x) \notag\\
&&&=|a|d(x,y).
\notag
\end{align}
Hence,
\begin{align}
&&&d(ax,ay)=|a|d(x,y),~\forall a\in\mathbb{R}.\notag
\end{align}
This means Lemma \ref{lemma1} is true. 
\end{proof}

Next, we prove the following theorem.

\begin{theorem}
(i) Let $\left\{z^{k}\right\}_{k \in \mathbb{N}}$ be generated by Algorithm \ref{iGGBPL}, then $J$ is constant on $z'$. 

(ii) $z' \subseteq crit~ J$, where $crit~ J=\{x:0\in\partial J(x)\}$.


\label{t5}
\end{theorem}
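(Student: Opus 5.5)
I will prove (ii) first, in the form: for every cluster point $z^*\in z'$ we have $J(z^{k_q})\to J(z^*)$ along a subsequence $z^{k_q}\to z^*$ and $0\in\partial J(z^*)$. Statement (i) then follows from this together with the monotonicity of $\{J(z^k)\}$.

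\textbf{Step 1: a subsequence.} Fix $z^*\in z'$ and a subsequence $z^{k_q}\to z^*$; such a $z^*$ exists by Lemma \ref{t3}. By Theorem \ref{t1}(ii) and Assumption \ref{assump2}(i), $\|z^{k_q-1}-z^{k_q}\|_d\to0$, so $z^{k_q-1}\to z^*$ as well.

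\textbf{Step 2: the extrapolation point.} We need $y^{k_q-1}\to z^*$. This holds because $y^{k-1}-x^{k-1}=\beta^{k-1}(x^{k-1}-x^{k-2})$, $\beta^{k}\in[0,1]$, and consecutive differences vanish. If the restart is triggered, $y=x$ and the claim is immediate.

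\textbf{Step 3: convergence of function values.} The issue is that $F_i$ is only lower semicontinuous, so I use the minimizing property of \eqref{gprox}. At iteration $k=k_q-1$, compare the value at $x_i^{k+1}$ with the value at the candidate $x=x_i^*$:
\begin{align}
F_i(x_i^{k+1})&+\tfrac{1}{2\sigma_i^k}(d_i^k(x_i^{k+1},y_i^k))^2+\langle\nabla_{x_i}h_i(y_i^k),x_i^{k+1}-y_i^k\rangle \notag\\
&\le F_i(x_i^*)+\tfrac{1}{2\sigma_i^k}(d_i^k(x_i^*,y_i^k))^2+\langle\nabla_{x_i}h_i(y_i^k),x_i^*-y_i^k\rangle. \notag
\end{align}
Here I write the inner product via the Riesz map, so $\langle R^{-1}g,v\rangle_{i,k}=\langle g,v\rangle$. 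The gradients are bounded on the bounded set of iterates. The key estimate is that $d_i^k(x_i^*,y_i^k)\to0$. By Lemma \ref{lemma1}, $d_i^k$ is translation invariant and absolutely homogeneous, hence $d_i^k(u,v)=\|u-v\|_{i,k}'$ is a norm of $u-v$. Assumption \ref{assump2}(iii) bounds $d_i^k(w,0)$ uniformly over $\|w\|_d\le1$, which gives $d_i^k(u,v)\le M\|u-v\|_d$ uniformly in $k$. Since $\sigma_i^k\ge\underline{\sigma}$, taking $\limsup$ yields $\limsup_q F_i(x_i^{k_q})\le F_i(x_i^*)$. Lower semicontinuity gives the reverse inequality, and continuity of $H$ then gives $J(z^{k_q})\to J(z^*)$.

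\textbf{Step 4: criticality.} By Theorem \ref{t2}, $p^{k_q}\in\partial J(z^{k_q})$ with
\begin{align}
\|p^{k_q}\|\le\rho_b\sum_i\bigl(d_i(x_i^{k_q},y_i^{k_q-1})+d_i(x_i^{k_q},x_i^{k_q-1})\bigr)\to0, \notag
\end{align}
where both metric terms vanish by the uniform upper bound $M\|\cdot\|_d$ and Steps 1--2. Since $z^{k_q}\to z^*$, $J(z^{k_q})\to J(z^*)$ and $p^{k_q}\to0$, the closedness built into the definition of the limiting subdifferential gives $0\in\partial J(z^*)$. This proves (ii).

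\textbf{Step 5: $J$ is constant on $z'$.} The sequence $\{J(z^k)\}$ is nonincreasing and bounded below by Theorem \ref{t1}(i) and Assumption \ref{assump1}, so it converges to some $\bar J$. By Step 3, $J(z^*)=\bar J$ for every $z^*\in z'$, which proves (i).

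\textbf{Main obstacle.} I expect the hardest step to be Step 3, specifically the uniform upper bound $d_i^k(u,v)\le M\|u-v\|_d$. It has to be derived from Lemma \ref{lemma1} together with Assumption \ref{assump2}(iii), applied on the compact unit sphere and extended by homogeneity. This bound is what makes the proximal terms vanish, and without it the $\limsup$ argument fails.
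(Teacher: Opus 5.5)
Your proposal is correct and follows essentially the same route as the paper. Both arguments test the minimizing property of Eq.~(\ref{gprox}) at the cluster point to get $\limsup_q F_i(x_i^{k_q})\le F_i(x_i^*)$, combine this with lower semicontinuity and the continuity of $H$, and then use the subgradient bound of Theorem~\ref{t2} together with closedness of the limiting subdifferential. The differences are minor: you derive the uniform bound $d_i^k(u,v)\le M\|u-v\|_d$ from Lemma~\ref{lemma1} and Assumption~\ref{assump2}(iii) rather than from Lemma~\ref{metric_lipschitz}, and you make explicit, via the monotone limit of $J(z^k)$, the passage from $J(z^{k_q})\to J(z^*)$ to constancy of $J$ on $z'$, which the paper leaves implicit.
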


\begin{proof}
(i) ${\forall}\overline{x} \in z'$, there exists a subsequence $x_{(N)}^{k_{j}}$  such that 
\begin{center}
  $\lim_{j \to \infty} x_{(N)}^{k_{j}}= \overline{x}$.  
\end{center}
Let 
\begin{align}
    && &F(x_{(N)}^{k_{j}})=\sum_{i=1}^{N}F_{i}(x_{i}^{k_{j}}). \label{ec200}
\end{align}
Since $F_{i}$ is lower semicontinuous, from Definition \ref{lower} and Eq. (\ref{ec200}), we obtain that 
\begin{align}
 && & \liminf_{j\to\infty} F(x_{(N)}^{k_{j}}) \geq F(\overline{x}). \label{t41}
\end{align}
Choosing $k=k_j-1$, from Eq. (\ref{gprox}), we infer
\begin{align}
&&F_i(x_i^{k_j})
&\leq
F_i(\overline{x_i})
+\frac{1}{2\sigma_i^{k_j-1}}
\left(
d_i^{k_j-1}
(\overline{x_i},y_i^{k_j-1})
\right)^2
\notag\\
&&&\quad
+\left\langle
R_{i,k_j-1}^{-1}
\nabla_{x_i}H
\left(
\{x_n^{k_j}\}_{n=1}^{i-1},
y_i^{k_j-1},
\{x_n^{k_j-1}\}_{n=i+1}^{N}
\right),
\overline{x_i}-x_i^{k_j}
\right\rangle_{i,k_j-1}.
\label{t421}
\end{align}
Since
$x_i^{k_j}\rightarrow\overline{x_i}$ and, by
Theorem \ref{t1}(\romannumeral2) and
Assumption \ref{assump2}(i),
\begin{align}
\|x_i^{k_j}-x_i^{k_j-1}\|_d
\longrightarrow0,
\notag
\end{align}
we have
$x_i^{k_j-1}\rightarrow\overline{x_i}$.
Moreover, since
\begin{align}
y_i^{k_j-1}
=
x_i^{k_j-1}
+\beta^{k_j-1}
\left(
x_i^{k_j-1}-x_i^{k_j-2}
\right)
\notag
\end{align}
and $\beta^{k_j-1}<1$, we obtain
\begin{align}
&&
\|\overline{x_i}-y_i^{k_j-1}\|_d
&\leq
\|\overline{x_i}-x_i^{k_j-1}\|_d
+\beta^{k_j-1}
\|x_i^{k_j-1}-x_i^{k_j-2}\|_d
\longrightarrow0.
\label{t422}
\end{align}
which also means that
$\lim_{j\to\infty}y_i^{k_j-1}=\overline{x_i}$.
From Eq. (\ref{t421}), Lemma \ref{metric_lipschitz}, and Eq. (\ref{t422}), we infer
\begin{align}
&& \limsup_{j\to\infty}F_i(x_i^{k_j})
&\leq
\limsup_{j\to\infty}
(
F_i(\overline{x_i})
+\frac{1}{2\sigma_i^{k_j-1}}
(d_i^{k_j-1}(\overline{x_i},y_i^{k_j-1}))^2 \notag \\
&& &+\Big\langle
R_{i,k_j-1}^{-1}
\nabla_{x_i}H
(
\{x_n^{k_j}\}_{n=1}^{i-1},
y_i^{k_j-1},
\{x_n^{k_j-1}\}_{n=i+1}^{N}
),
\overline{x_i}-x_i^{k_j}
\rangle_{i,k_j-1}
)
\notag\\
&&&\leq
F_i(\overline{x_i})
+\limsup_{j\to\infty}
(
\frac{1}{2\sigma_i^{k_j-1}}
(d_i^{k_j-1}(\overline{x_i},y_i^{k_j-1}))^2 \notag \\
&&&+\Big\langle
R_{i,k_j-1}^{-1}
\nabla_{x_i}H
(\{x_n^{k_j}\}_{n=1}^{i-1},
y_i^{k_j-1},
\{x_n^{k_j-1}\}_{n=i+1}^{N}),
\overline{x_i}-x_i^{k_j}
\rangle_{i,k_j-1})
\notag\\
&&&=
F_i(\overline{x_i}).
\label{ectemp}
\end{align}
 Therefore, from Eq. (\ref{ec200}) and Eq. (\ref{ectemp}), we have
\begin{align}
&& &\limsup_{j\to\infty} F(x_{(N)}^{k_{j}}) \leq \limsup_{j\to\infty} F(\overline{x}). \label{t431}
\end{align}
From Eq. (\ref{ec200}), Eq. (\ref{t431}) and Eq. (\ref{t41}), we infer 
\begin{align}
&& &\lim_{j \to \infty}F(x_{(N)}^{k_{j}}) =F(\overline{x}). 
\label{t43}
\end{align}
From Theorem \ref{t1} and Proposition \ref{tf4}, we have  
\begin{align}
  && & \lim_{j \to \infty} H(x_{(N)}^{k_{j}})=H( \overline{x}).
   \label{t432}
\end{align}
Thus from Eq. (\ref{t43}) and  Eq. (\ref{t432}), we infer 
\begin{align}
   &&\lim_{j \to \infty} H(x_{(N)}^{k_{j}})+\lim_{j \to \infty}F(x_{(N)}^{k_{j}})&=\lim_{j \to \infty} (H(x_{(N)}^{k_{j}})+F(x_{(N)}^{k_{j}}))\notag \\
   && &=\lim_{j \to \infty} J(x_{(N)}^{k_{j}}) \notag \\
   && &=J( \overline{x}). \notag
\end{align}
This means $J$ is constant on $z'$. 

(ii) From Theorem \ref{t2}, we know that
\begin{align}
&&&p_{x^{k+1}}\in \partial J(x^{k+1}_{(N)}).
\label{t21}
\end{align}
Moreover, from Theorem \ref{t1}(\romannumeral2), Assumption \ref{assump2}, Proposition \ref{p_metric}, Lemma \ref{lemma1} and Theorem \ref{t2}, we have
\begin{align}
&& \lim_{k\to\infty}\|p_{x^{k+1}}\|&\leq \lim_{k\to\infty}\rho_b\sum_{i=1}^{N}(d_i^k(x_i^{k+1},y_i^k)+d_i^k(x_i^{k+1},x_i^k)) \notag\\
&&& \leq \lim_{k\to\infty}\rho_b\sum_{i=1}^{N}d_i^k(x_i^{k+1},x_i^k)+\lim_{k\to\infty}\rho_b\sum_{i=1}^{N}d_i^k(\beta^k(x_i^{k}-x_i^{k-1}),0)  \notag\\
&&& \leq \lim_{k\to\infty}\rho_b\sum_{i=1}^{N}\omega_i(x_i^{k+1},x_i^k)+\lim_{k\to\infty}\rho_b\sum_{i=1}^{N}\omega_i(\beta^k(x_i^{k}-x_i^{k-1}),0)  \notag\\
&&& = \lim_{k\to\infty}\rho_b\sum_{i=1}^{N}\omega_i(x_i^{k+1},x_i^k)+\rho_b\sum_{i=1}^{N}\lim_{k\to\infty}\omega_i(\beta^k(x_i^{k}-x_i^{k-1}),0)  \notag\\
&&&=0.
\label{t22}
\end{align}
Let $\overline{x}\in z'$. Then there exists a subsequence $\{x_{(N)}^{k_j}\}_{j\in\mathbb N}$ such that
\begin{align}
&&&x_{(N)}^{k_j}\to\overline{x}.
\label{t44}
\end{align}
By Theorem \ref{t5}(\romannumeral1), we have
\begin{align}
&&&J(x_{(N)}^{k_j})\to J(\overline{x}).
\label{t45}
\end{align}
Taking the subsequence in Eq. (\ref{t21}) and Eq. (\ref{t22}), we obtain
\begin{align}
&&&p_{x^{k_j}}\in\partial J(x_{(N)}^{k_j}),\quad p_{x^{k_j}}\to0.
\label{t46}
\end{align}
Therefore, from the fact that the limiting subdifferential has a sequentially closed graph, Eq. (\ref{t44}), Eq. (\ref{t45}) and Eq. (\ref{t46}) imply 
\begin{align}
&&&0\in\partial J(\overline{x}). \notag
\end{align}
\end{proof}

\subsection{Global convergence}
Before continuing with the proof of the Lemma and Theorems in the manuscript, we prove the following lemma. 
\begin{lemma}
\label{metric_lipschitz}
Suppose that Assumption \ref{assump2}(ii)--(iii) holds. For every $i\in[N]$ and every compact set $K_i\subseteq\mathbb{R}^{d_i}$, there exists a positive constant $L_{i,K_i}$ s.t.
\begin{align}
|d(x,y)-d(z,y)|
\leq
L_{i,K_i}\|x-z\|_d,
\quad
\forall d\in\mathcal{D}_i,\quad
\forall x,z,y\in K_i.
\notag
\end{align}
Moreover,
\begin{align}
\|\eta\|_{d,*}
\leq
L_{i,K_i},
\quad
\forall d\in\mathcal{D}_i,\quad
\forall x,y\in K_i,\quad
\forall\eta\in\partial_1d(x,y),
\notag
\end{align}
where $\|\cdot\|_{d,*}$ denotes the dual norm of $\|\cdot\|_d$.
\end{lemma}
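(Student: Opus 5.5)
The plan is to prove a uniform bound on subgradients first and then obtain the Lipschitz estimate from it via the subgradient inequality. The argument follows the enlarged-compact-set trick from Lemma \ref{metric_bound}, now applied to arbitrary points of $K_i$ rather than to iterates.

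\textbf{Step 1: an enlarged compact set.} Fix $i\in[N]$ and a compact set $K_i\subseteq\mathbb{R}^{d_i}$. Choose $R>0$ with $\|u\|_d\le R$ for all $u\in K_i$, and define
\begin{align}
\widetilde K_i:=\{u:\|u\|_d\le R+1\},\qquad M:=\sup_{u\in\widetilde K_i,\,y\in K_i}\omega_i(u,y).\notag
\end{align}
The set $\widetilde K_i\times K_i$ is compact, so $M<+\infty$ by Assumption \ref{assump2}(iii). Because $\omega_i$ is the upper envelope over $\mathcal{D}_i$, this $M$ bounds every $d\in\mathcal{D}_i$ simultaneously, and that is what makes the final constant uniform in $d$.

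\textbf{Step 2: the dual-norm bound on subgradients.} Take $d\in\mathcal{D}_i$, points $x,y\in K_i$, and $\eta\in\partial_1 d(x,y)$. By Assumption \ref{assump2}(ii), $d(\cdot,y)$ is convex, so $\partial_1$ is the convex subdifferential. For any $v$ with $\|v\|_d\le1$ we have $x+v\in\widetilde K_i$, and the subgradient inequality gives
\begin{align}
\langle\eta,v\rangle\le d(x+v,y)-d(x,y)\le M-0=M.\notag
\end{align}
Replacing $v$ by $-v$ gives the lower bound, so $|\langle\eta,v\rangle|\le M$. Taking the supremum over $\|v\|_d\le1$ yields $\|\eta\|_{d,*}\le M$, which is the second assertion of the lemma once we set $L_{i,K_i}:=M$. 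Nonemptiness of $\partial_1 d(x,y)$ is automatic here: a finite convex function on $\mathbb{R}^{d_i}$ has a nonempty subdifferential at every point.

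\textbf{Step 3: the Lipschitz estimate.} For $x,z,y\in K_i$, pick $\eta_z\in\partial_1 d(z,y)$. The subgradient inequality at $z$ gives
\begin{align}
d(x,y)-d(z,y)\ge\langle\eta_z,x-z\rangle\ge-\|\eta_z\|_{d,*}\|x-z\|_d\ge -M\|x-z\|_d.\notag
\end{align}
Exchanging the roles of $x$ and $z$, using $\eta_x\in\partial_1 d(x,y)$, gives $d(x,y)-d(z,y)\le M\|x-z\|_d$. Together these give $|d(x,y)-d(z,y)|\le L_{i,K_i}\|x-z\|_d$ with $L_{i,K_i}=M$. To make the constant strictly positive, replace it by $\max\{M,1\}$ if necessary.

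I expect the main delicate point to be uniformity over the family $\mathcal{D}_i$. Convexity alone gives only local Lipschitz continuity of each individual $d$, with constants that could blow up across the family. Passing through the envelope $\omega_i$ on the enlarged set $\widetilde K_i\times K_i$ removes this: the constant $M$ depends only on $K_i$ and $i$, not on the particular $d$.
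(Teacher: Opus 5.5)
Your proposal is correct and follows essentially the same route as the paper: enlarge $K_i$ by a unit $\|\cdot\|_d$-ball, use Assumption~\ref{assump2}(iii) to bound the envelope $\omega_i$ uniformly over $\mathcal{D}_i$ on that compact product, obtain $\|\eta\|_{d,*}\le M$ from the subgradient inequality at $x\pm v$, and then derive the Lipschitz estimate from subgradients at $x$ and $z$ with $L_{i,K_i}=M$. Your $\max\{M,1\}$ safeguard is harmless but not needed, since $M>0$ automatically once $K_i\neq\emptyset$: the enlarged ball contains points $u\neq y$, and any metric gives $d(u,y)>0$ there.
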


\begin{proof}
Fix $i\in[N]$ and a compact set $K_i\subseteq\mathbb{R}^{d_i}$. Since $K_i$ is bounded, there exists $R_i>0$ s.t.
\begin{align}
K_i
\subseteq
\{x\in\mathbb{R}^{d_i}:\|x\|_d\leq R_i\}.
\notag
\end{align}
Define
\begin{align}
\widehat{K}_i
:=
\{u\in\mathbb{R}^{d_i}:\|u\|_d\leq R_i+1\}.
\notag
\end{align}
The set $\widehat{K}_i\times K_i$ is compact. By Assumption \ref{assump2}(iii),
\begin{align}
M_{i,K_i}
:=
\sup_{\substack{u\in\widehat{K}_i~y\in K_i}}
\omega_i(u,y)
=
\sup_{\substack{d\in\mathcal{D}_i\\u\in\widehat{K}_i,~y\in K_i}}
d(u,y)
<
+\infty.
\notag
\end{align}

Let $d\in\mathcal{D}_i$, $x,y\in K_i$, and $\eta\in\partial_1d(x,y)$. For every $v\in\mathbb{R}^{d_i}$ satisfying $\|v\|_d\leq1$, Assumption \ref{assump2}(ii) gives
\begin{align}
d(x+v,y)
&\geq
d(x,y)+\langle\eta,v\rangle,
\notag\\
d(x-v,y)
&\geq
d(x,y)-\langle\eta,v\rangle.
\notag
\end{align}
Since $x+v,x-v\in\widehat{K}_i$, the definition of $M_{i,K_i}$ gives
\begin{align}
d(x+v,y)
\leq
M_{i,K_i},
\qquad
d(x-v,y)
\leq
M_{i,K_i}.
\notag
\end{align}
Moreover, $d(x,y)\geq0$. Therefore,
\begin{align}
-M_{i,K_i}
\leq
\langle\eta,v\rangle
\leq
M_{i,K_i},
\quad
\forall\|v\|_d\leq1.
\notag
\end{align}
By the definition of the dual norm,
\begin{align}
\|\eta\|_{d,*}
=
\sup_{\|v\|_d\leq1}
|\langle\eta,v\rangle|
\leq
M_{i,K_i}.
\notag
\end{align}

Since $d(\cdot,y)$ is finite and convex on $\mathbb{R}^{d_i}$, its subdifferential is nonempty at every point. For any $x,z,y\in K_i$, take $\eta_x\in\partial_1d(x,y)$ and $\eta_z\in\partial_1d(z,y)$. The corresponding subgradient inequalities give
\begin{align}
d(z,y)-d(x,y)
&\geq
\langle\eta_x,z-x\rangle,
\notag\\
d(z,y)-d(x,y)
&\leq
\langle\eta_z,z-x\rangle.
\notag
\end{align}
Using the uniform subgradient bound, we obtain
\begin{align}
-M_{i,K_i}\|x-z\|_d
&\leq
d(z,y)-d(x,y)
\notag\\
&\leq
M_{i,K_i}\|x-z\|_d.
\notag
\end{align}
Hence,
\begin{align}
|d(x,y)-d(z,y)|
\leq
M_{i,K_i}\|x-z\|_d.
\notag
\end{align}
The results follow by setting $L_{i,K_i}=M_{i,K_i}$.
\end{proof}

Next, we prove Property \ref{based}(\romannumeral4) that the sequence generated by our methods has global convergence and quantify the convergence rate of the proposed methods. 
\begin{theorem}
The sequence $\{z^{k}\}_{k \in \mathbb{N}}$ generated by Algorithm \ref{iGGBPL} converges, i.e., $\lim_{k\rightarrow \infty} \sup_{p\in \mathbb{N}}\|z^{k+p}-z^{k}\|_d=0$ and the finite-dimensional normed space induced by $\|\cdot\|_d$ is complete. 
\label{glo}
\end{theorem}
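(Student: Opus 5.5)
The plan is the standard KŁ finite-length argument, carried out in the generalized metric setting. The aim is $\sum_{k}\sum_{i} d_i^k(x_i^{k+1},x_i^k)<\infty$. By Assumption \ref{assump2}(i) this gives $\sum_k \|z^{k+1}-z^k\|_d<\infty$. The triangle inequality then yields $\sup_{p}\|z^{k+p}-z^k\|_d\le \sum_{l\ge k}\|z^{l+1}-z^l\|_d\to0$. So $\{z^k\}$ is Cauchy, and it converges by Proposition \ref{p_complete}.

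First I dispose of the degenerate case. By Theorem \ref{t1}(i), $J(z^k)$ is nonincreasing and bounded below, so $J(z^k)\downarrow \bar J$. By Theorem \ref{t5}(i), $\bar J=J(\bar x)$ for every $\bar x\in z'$. If $J(z^{K})=\bar J$ for some $K$, then Theorem \ref{t1}(i) forces $d_i^k(x_i^{k+1},x_i^k)=0$ for all $k\ge K$, and the sequence is eventually constant. Otherwise $J(z^k)>\bar J$ for all $k$. In that case I apply the uniformized KŁ property (Proposition \ref{def_uniKL}) on the compact set $z'$ (Lemma \ref{t3}), using that $J$ is constant on $z'$ (Theorem \ref{t5}). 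Since $\mathrm{dist}(z^k,z'){\to}0$ (boundedness plus the definition of cluster points) and $J(z^k)\to\bar J$, there is $k_0$ such that for all $k\ge k_0$,
\[
\phi'\bigl(J(z^k)-\bar J\bigr)\,\mathrm{dist}\bigl(0,\partial J(z^k)\bigr)\ge 1 .
\]

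Next I combine the three properties, Property \ref{based}(i)–(iii). Write $a_k:=\sum_i d_i^k(x_i^{k+1},x_i^k)$ and $\Delta_k:=\phi(J(z^k)-\bar J)-\phi(J(z^{k+1})-\bar J)$. Concavity of $\phi$ and Theorem \ref{t1}(i) give $\Delta_k\ge \phi'(J(z^k)-\bar J)\,\rho\sum_i (d_i^k)^2\ge \phi'(J(z^k)-\bar J)\frac{\rho}{N}a_k^2$. Theorem \ref{t2} (applied at index $k$) gives $\mathrm{dist}(0,\partial J(z^k))\le\rho_b(b_{k-1}+a_{k-1})$, where $b_{k-1}:=\sum_i d_i^{k-1}(x_i^k,y_i^{k-1})$. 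Hence $a_k^2\le \frac{N\rho_b}{\rho}\Delta_k(a_{k-1}+b_{k-1})$.

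The main obstacle is the inertial term $b_{k-1}$, which does not appear in Theorem \ref{t1}(i). I would handle it through the restart mechanism. If an inertial step is rejected, then $y^{k-1}=x^{k-1}$ and $b_{k-1}=a_{k-1}$. If it is accepted, I bound $b_{k-1}\le a_{k-1}+\sum_i d_i^{k-1}(x_i^{k-1},y_i^{k-1})$ by the triangle inequality. Lemma \ref{lemma1} (translation invariance and homogeneity) turns the last term into $\beta^{k-1}\sum_i d_i^{k-1}(x_i^{k-1},x_i^{k-2})$. Then Assumption \ref{assump2}(i) and the boundedness of $\omega_i$ on compacts (via norm equivalence and the Lipschitz-type estimate of Lemma \ref{metric_lipschitz}) let me compare $d_i^{k-1}$ with $d_i^{k-2}$ up to a uniform constant $c$. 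The result is $b_{k-1}\le a_{k-1}+c\,a_{k-2}$. I expect this uniform cross-metric comparison to be the delicate point. If it fails, I would fall back on bounding everything in $\|\cdot\|_d$ with a constant $C'$ from Lemma \ref{metric_lipschitz}, and redo the argument with $a_k$ replaced by $\|z^{k+1}-z^k\|_d$.

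With $a_k^2\le C\Delta_k(a_{k-1}+a_{k-2})$ in hand, I finish with the AM–GM step. It gives $2a_k\le \epsilon(a_{k-1}+a_{k-2})+\frac{C}{\epsilon}\Delta_k$; choosing $\epsilon=1/2$ and summing over $k\ge k_0$ telescopes $\Delta_k$ to at most $\phi(J(z^{k_0})-\bar J)$. The $a_{k-1},a_{k-2}$ terms are absorbed on the left, which leaves $\sum_k a_k<\infty$ and hence the claim.
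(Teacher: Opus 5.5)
Your proposal is correct and follows essentially the paper's proof: the degenerate case, the uniformized inequality of Proposition \ref{def_uniKL} on the compact set $z'$, concavity of $\phi$ combined with Theorem \ref{t1}(i) and Theorem \ref{t2}, control of the inertial term through Lemma \ref{lemma1} and Lemma \ref{metric_lipschitz}, and then AM--GM, summation and absorption. The paper keeps its bookkeeping in the fixed norm via $T_k=\sum_i\|x_i^{k+1}-x_i^k\|_d$, which is exactly your fallback, and your primary route in metric units also works: Lemma \ref{metric_lipschitz} with $z=y$ together with Assumption \ref{assump2}(i) gives $d_i^{k-1}(u,v)\le L\|u-v\|_d\le (L/C_d)\,d_i^{k-2}(u,v)$ uniformly on the bounded set containing the iterates, so the cross-metric comparison is not actually delicate.
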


\begin{proof}
Let $\overline{z}\in z'$. 
If $J(z^k)=J(\overline{z})$ for some $k$, then Theorem \ref{t1}(\romannumeral1) implies that
$d_i^k(x_i^{k+1},x_i^k)=0$ for all $i\in[N]$, and the sequence remains constant thereafter.
Therefore, we assume that
$J(z^k)>J(\overline{z})$ for all $k$.

Since $z'$ is compact and $J$ is constant on $z'$ by Theorem \ref{t5}, 
and $\operatorname{dist}(z^k,z')\rightarrow0$,
the uniformized KŁ property (Proposition \ref{def_uniKL}) yields a concave function $\phi$ and an integer $l\geq1$ such that
\begin{align}
\phi^{'}(J(z^{k})-J(\overline{z}))
\operatorname{dist}(0,\partial J(z^{k}))
\geq1,
\quad \forall k\geq l.
\label{KL1}
\end{align}
Since $\phi$ is a concave function, we have
\begin{align}
&&\phi(J(z^{k+1})-J( \overline{z}))&\leq \phi(J(z^{k})-J( \overline{z}))+\phi^{'}(J(z^{k})-J( \overline{z}))(J(z^{k+1})-J(z^{k})). 
\label{temp1}
\end{align}
From Theorem \ref{t2}, we infer 
\begin{align}
&& \operatorname{dist}(0, \partial J(z^{k}) ) &\leq \|\left\{p_{x^{k}_{i}} \right\}^{N}_{i=1}\| \notag \\
&& &\leq \rho_{b}\sum_{i=1}^{N}(d_i^{k-1}(x_i^{k},y_i^{k-1})+d_i^{k-1}(x_i^{k},x_i^{k-1})). \label{glo1}
\end{align}
Since the Eq. (\ref{KL1}) and Eq. (\ref{glo1}), we have 
\begin{align}
&&  \phi^{'}(J(z^{k})-J(\overline{z})) &\geq \frac{1}{\operatorname{dist}(0, \partial J(z^{k}))} \notag \\
&& &\geq \frac{1}{\rho_{b}\sum_{i=1}^{N}(d_i^{k-1}(x_i^{k},y_i^{k-1})+d_i^{k-1}(x_i^{k},x_i^{k-1}))}. \label{glo2}
\end{align}
Let $G(k)=J(z^{k})-J( \overline{z})$, from  Eq. (\ref{temp1}), Eq. (\ref{glo1}) and Eq. (\ref{glo2}), we have  
\begin{align}
&& \phi(G(k))-\phi(G(k+1)) &\geq \phi^{'}(G(k))(G(k)-G(k+1))  \notag \\
&& &\geq \frac{G(k)-G(k+1)}{\rho_{b}\sum_{i=1}^{N}(d_i^{k-1}(x_i^{k},y_i^{k-1})+d_i^{k-1}(x_i^{k},x_i^{k-1}))} \notag \\
&& &\geq \frac{\rho\sum_{i=1}^{N}(d_i^k(x_i^{k+1},x_i^k))^2}{\rho_{b}\sum_{i=1}^{N}(d_i^{k-1}(x_i^{k},x_i^{k-1})+d_i^{k-1}(x_i^{k},y_i^{k-1}))}. \notag 
\end{align}
Thus, we infer 
  \begin{align}
    \rho\sum_{i=1}^{N}(d_i^k(x_i^{k+1},x_i^k))^2\leq  (\phi(G(k))-\phi(G(k+1))) \rho_{b}\sum_{i=1}^{N}(d_i^{k-1}(x_i^{k},x_i^{k-1})+d_i^{k-1}(x_i^{k},y_i^{k-1}))\label{glotemp1}.
    \end{align}
From Definition \ref{d0}, Definition \ref{dlower}, Lemma \ref{lemma1}, Lemma \ref{metric_lipschitz} and Proposition \ref{p_norm_equiv}, in this bounded subset, there exists a positive constant $C_D$ such that
\begin{align}
&&d_i^{k-1}(x_i^{k},x_i^{k-1})+d_i^{k-1}(x_i^{k},y_i^{k-1})&\leq 2d_i^{k-1}(x_i^{k},x_i^{k-1})+d_i^{k-1}(\beta^{k-1}(x_i^{k-1}-x_i^{k-2}),0) \notag \\
&&&\leq 2(d_i^{k-1}(x_i^{k},x_i^{k-1})+d_i^{k-1}(\beta^{k-1}(x_i^{k-1}-x_i^{k-2}),0)) \notag \\
&& &\leq C_D(\|x_i^{k}-x_i^{k-1}\|_d+\beta^{k-1}\|x_i^{k-1}-x_i^{k-2}\|_d) \notag \\
\end{align}
Therefore, from the above formulation, $\beta^k<1$, Assumption \ref{assump2} and Eq. (\ref{glotemp1}), define $T_k=\sum_{i=1}^{N}\|x_i^{k+1}-x_i^k\|_d$ and $C_{a}=\frac{N\rho_b}{\rho C_d^2}$, we infer
\begin{align}
T_k^2
&\leq
\frac{N}{C_d^2}
\sum_{i=1}^{N}
(d_i^k(x_i^{k+1},x_i^k))^2
\notag\\
&\leq
\frac{N\rho_b}{\rho C_d^2}
(\phi(G(k))-\phi(G(k+1)))
\sum_{i=1}^{N}(d_i^{k-1}(x_i^k,y_i^{k-1})+d_i^{k-1}(x_i^k,x_i^{k-1}))
\notag\\
&\leq
C_aC_D
(\phi(G(k))-\phi(G(k+1)))
(T_{k-1}+T_{k-2}),
\notag
\end{align}
Let $C=2C_{a}C_D$, then using the fact that $2ab\leq a^{2}+b^{2}$ 
  \begin{center}
    $2 T_k \leq C (\phi(G(k))-\phi(G(k+1))) +\frac{1}{2}(T_{k-1}+T_{k-2})$.
    \end{center}
Sum both sides 
\begin{align}
&&  2 \sum_{k=l+1}^{K} T_k &\leq \frac12\sum_{k=l+1}^{K} T_{k-1}+\frac12\sum_{k=l+1}^{K}T_{k-2}+C(\phi(G(l+1))-\phi(G(K+1))) \notag \\
&& &\leq C (\phi(G(l+1))-\phi(G(K+1)))+T_{l}+T_{l-1}+\sum_{k=l+1}^{K}T_k. \label{glotemp0}
\end{align}
From Assumption \ref{assump1} and Eq. (\ref{glotemp0}),  we can get that 
\begin{align}
&& \lim_{K\rightarrow \infty}\sum_{k=l+1}^{K} T_k &\leq T_{l}+T_{l-1}+ C\phi(G(l+1))- \lim_{K\rightarrow \infty} C\phi(G(K+1)) \notag \\
&& &= C \phi(G(l+1))+T_{l}+T_{l-1}-C\phi(\lim_{K \to \infty} G(K+1))  \notag \\
 && & < \infty .\notag
\end{align}
Thus we have 
\begin{align}
&& \sum_{k=l+1}^{\infty} T_k &< \infty. \label{c7}
\end{align}
From Assumption \ref{assump2} and Eq. (\ref{c7}), it shows that 
\begin{align}
&& \lim_{K\rightarrow \infty} \sup_{p\in \mathbb{N}}\|z^{K+p}-z^{K}\|_d& \leq \lim_{K\rightarrow \infty}\sum_{k=K}^{\infty}\|z^{k+1}-z^{k}\|_d \notag\\
&& &\leq\lim_{K\rightarrow \infty}\sum_{k=K}^{\infty} T_k \notag\\
&& &= 0.  \notag
\end{align}
Given that $\|\cdot\|_d$ is a norm and Proposition \ref{p_norm_equiv}, the above formulation means that Theorem \ref{glo} is true.

\end{proof}

\begin{theorem} 
(Convergence rate) Let $\{z^{k}\}_{k\in\mathbb{N}}$ be the sequence generated by Algorithm \ref{iGGBPL}. Suppose that the desingularizing function has the form $\phi(t)=\frac{C}{\theta}t^{\theta}$, where $\theta\in(0,1]$ and $C>0$. Let $r^k=|J(z^k)-J^*|$, where $J^*=\lim_{k\rightarrow\infty}J(z^k)$. The following assertions hold.

(\romannumeral1) If $\theta=1$, Algorithm \ref{iGGBPL} terminates in finite steps.

(\romannumeral2) If $\theta\in[\frac{1}{2},1)$, then there exist positive constants $c_1$, $q\in(0,1)$, and an integer $k_2$ such that
$r^k
\leq
c_1q^{k-k_2},
\quad
\forall k\geq k_2.$

(\romannumeral3) If $\theta\in(0,\frac{1}{2})$, then there exist a positive constant $c_2$ and an integer $k_3$ such that
$
r^k
\leq
c_2(k-k_3)^{-\frac{1}{1-2\theta}},
\quad
\forall k>k_3.
$
\label{rate}
\end{theorem}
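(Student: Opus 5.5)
The plan is to derive a scalar recursion for $r^k$ from three ingredients: the sufficient decrease of Theorem \ref{t1}(i), the relative error bound of Theorem \ref{t2}, and the KŁ inequality (\ref{KL1}) used in the proof of Theorem \ref{glo}. Since $\{J(z^k)\}$ is nonincreasing and $z^k\to z^*\in z'$ by Theorem \ref{glo}, we have $J^*=J(z^*)$ by Theorem \ref{t5}, and $r^k=J(z^k)-J^*\geq0$. If $r^{k}=0$ for some $k$, then Theorem \ref{t1}(i) forces $d_i^k(x_i^{k+1},x_i^k)=0$ and the sequence stalls, so all three claims are trivial. We therefore assume $r^k>0$ for all $k$. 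With $\phi(t)=\frac{C}{\theta}t^\theta$, so that $\phi'(t)=Ct^{\theta-1}$, inequality (\ref{KL1}) together with (\ref{glo1}) gives, for $k\geq l$,
\begin{align}
(r^k)^{1-\theta}\leq C\rho_b\sum_{i=1}^N\big(d_i^{k-1}(x_i^k,y_i^{k-1})+d_i^{k-1}(x_i^k,x_i^{k-1})\big)\leq C'\,(T_{k-1}+T_{k-2}),
\notag
\end{align}
where the last step uses the bound with $C_D$ from the proof of Theorem \ref{glo} and $\beta^k<1$.

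Next I connect the right-hand side back to decreases of $r$. By Theorem \ref{t1}(i) and Assumption \ref{assump2}(i),
\begin{align}
T_k^2\leq \frac{N}{C_d^2}\sum_i (d_i^k(x_i^{k+1},x_i^k))^2\leq \frac{N}{\rho C_d^2}(r^k-r^{k+1}).
\notag
\end{align}
Squaring the previous inequality and using $r^{k-1}\geq r^k$ then yields
\begin{align}
(r^k)^{2(1-\theta)}\leq C''\big(r^{k-2}-r^{k}\big),\qquad k\geq l+2.
\notag
\end{align}
The shift by two indices comes from the inertial term and is the only difference from the classical Attouch--Bolte analysis.

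\textbf{Case (i), $\theta=1$.} The recursion says $1\leq C''(r^{k-2}-r^k)$ for all large $k$ whenever $r^k>0$. Since $r^k\to0$, the differences $r^{k-2}-r^k$ tend to zero, which is a contradiction. Hence $r^k=0$ after finitely many steps, and the method terminates.

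\textbf{Case (ii), $\theta\in[\frac12,1)$.} Here $2(1-\theta)\leq1$, and since $r^k\leq1$ eventually, we get $r^k\leq (r^k)^{2(1-\theta)}\leq C''(r^{k-2}-r^k)$. This gives $r^k\leq \frac{C''}{1+C''}r^{k-2}$, so $q=\big(\frac{C''}{1+C''}\big)^{1/2}$ produces the geometric bound.

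\textbf{Case (iii), $\theta\in(0,\frac12)$.} Here the exponent $\alpha=2(1-\theta)>1$, and I would use the standard comparison with $\psi(s)=s^{1-\alpha}$ on the even and odd subsequences separately, since the recursion couples $k$ and $k-2$. Two sub-cases arise. If $(r^k)^{-\alpha}\leq 2(r^{k-2})^{-\alpha}$, integrating $s^{-\alpha}$ over $[r^k,r^{k-2}]$ gives $\psi(r^k)-\psi(r^{k-2})\geq \mu>0$. Otherwise $r^k$ is much smaller than $r^{k-2}$, and the same increment bound follows after shrinking $\mu$. Summing the increments gives $(r^k)^{1-\alpha}\geq \mu (k-k_3)/2$, which is the rate $(k-k_3)^{-1/(1-2\theta)}$ because $\alpha-1=1-2\theta$.

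\textbf{Main obstacle.} The hardest step is the index shift in the recursion. The KŁ bound controls $r^k$ by $T_{k-1}+T_{k-2}$, and $T_{k-2}^2$ is bounded only by $r^{k-2}-r^{k-1}$. So the recursion must be written with $r^{k-2}$, and the even/odd splitting must be handled carefully in cases (ii) and (iii). One must also ensure that $k\geq l$, so that the uniformized KŁ inequality applies, and that the constants stay uniform over $k$. This uniformity comes from the boundedness given by Lemma \ref{t3} together with Lemma \ref{metric_lipschitz}.
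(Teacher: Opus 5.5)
Your proposal is correct and follows essentially the same route as the paper: the KŁ inequality with $\phi'(t)=Ct^{\theta-1}$, the subgradient bound of Theorem~\ref{t2} and the sufficient decrease of Theorem~\ref{t1} combine into the two-step recursion $(r^k)^{2-2\theta}\le C''(r^{k-2}-r^k)$. The three ranges of $\theta$ are then handled on the even and odd subsequences exactly as you describe. The differences are only cosmetic: you pass through the norm increments $T_k$ and the constant $C_D$ from the proof of Theorem~\ref{glo}, where the paper uses $A_k=\sum_i(d_i^{k-1}(x_i^k,x_i^{k-1}))^2$ and the constant $C_y$; and in case (iii) you use an integral comparison with a threshold on $(r^k)^{-\alpha}$, where the paper uses the mean value theorem with the threshold $s_m\le\frac12 s_{m-1}$.
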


\begin{proof}
If $r^k=0$ for some $k\in\mathbb{N}$, the result follows directly from Theorem \ref{t1}. Therefore, we assume that $r^k>0$ for every $k\in\mathbb{N}$. From Proposition \ref{def_uniKL}, there exists an integer $k_0$ such that
\begin{align}
\phi^{\prime}(r^k)
\operatorname{dist}(0,\partial J(x^{k}_{(N)}))
\geq
1,
\quad
\forall k\geq k_0.
\label{rate_KL}
\end{align}

Define
\begin{align}
A_k
:=
\sum_{i=1}^{N}
\left(
d_i^{k-1}(x_i^k,x_i^{k-1})
\right)^2.
\notag
\end{align}
Since the generated sequence is bounded, Lemma \ref{metric_lipschitz}, Assumption \ref{assump2}(i), and
$y_i^{k-1}=x_i^{k-1}+\beta^{k-1}(x_i^{k-1}-x_i^{k-2})$
show that there exists a positive constant $C_y$ such that
\begin{align}
d_i^{k-1}(x_i^k,y_i^{k-1})
&\leq
C_y\|x_i^k-y_i^{k-1}\|_d
\notag\\
&\leq
C_y\left(
\|x_i^k-x_i^{k-1}\|_d
+
\beta^{k-1}
\|x_i^{k-1}-x_i^{k-2}\|_d
\right)
\notag\\
&\leq
\frac{C_y}{C_d}
\left(
d_i^{k-1}(x_i^k,x_i^{k-1})
+
d_i^{k-2}(x_i^{k-1},x_i^{k-2})
\right).
\label{rate_metric}
\end{align}
Therefore, from Theorem \ref{t2}, Eq. (\ref{rate_metric}), and the Cauchy--Schwarz inequality, there exists a positive constant $C_b$ such that
\begin{align}
dist^2(0,\partial J(x^{k}_{(N)}))
\leq
C_b(A_k+A_{k-1}).
\label{rate_subgradient}
\end{align}
Moreover, Theorem \ref{t1} gives
\begin{align}
\rho A_k
&\leq
r^{k-1}-r^k,
\notag\\
\rho A_{k-1}
&\leq
r^{k-2}-r^{k-1}.
\notag
\end{align}
Combining these inequalities with Eq. (\ref{rate_KL}), we obtain
\begin{align}
1
&\leq
\left(
\phi^{\prime}(r^k)
\operatorname{dist}(0,\partial J(x^{k}_{(N)}))
\right)^2
\notag\\
&\leq
\frac{C_b}{\rho}
\left(
\phi^{\prime}(r^k)
\right)^2
(r^{k-2}-r^k).
\label{eq_rate1}
\end{align}
Let $d_1=C_b/\rho$. Since
$\phi(t)=\frac{C}{\theta}t^{\theta}$, we have
$\phi^{\prime}(t)=Ct^{\theta-1}$. Hence,
\begin{align}
1
\leq
d_1C^2
(r^k)^{2\theta-2}
(r^{k-2}-r^k).
\label{eq_rate2}
\end{align}

We next discuss the three possible cases of $\theta$.

(\romannumeral1) Let $\theta=1$. From Eq. (\ref{eq_rate2}), we obtain
\begin{align}
r^{k-2}-r^k
\geq
\frac{1}{d_1C^2},
\quad
\forall k\geq k_0.
\notag
\end{align}
If $r^k>0$ for infinitely many $k$, each of the two nonnegative subsequences $\{r^{2k}\}$ and $\{r^{2k+1}\}$ decreases by at least the fixed positive constant $1/(d_1C^2)$ at every step. This is impossible. Therefore, $r^k=0$ for some finite $k$. Theorem \ref{t1} then implies that all subsequent block increments vanish, and Algorithm \ref{iGGBPL} terminates in finite steps.

(\romannumeral2) Let $\theta\in[\frac{1}{2},1)$. Since $r^k\rightarrow0$, there exists an integer $k_2\geq k_0$ such that
\begin{align}
0<r^k\leq1,
\quad
\forall k\geq k_2.
\notag
\end{align}
Since $2-2\theta\in(0,1]$, we have
$r^k\leq(r^k)^{2-2\theta}$. Therefore, Eq. (\ref{eq_rate2}) gives
\begin{align}
r^k
&\leq
(r^k)^{2-2\theta}
\notag\\
&\leq
d_1C^2(r^{k-2}-r^k),
\quad
\forall k\geq k_2.
\notag
\end{align}
Thus,
\begin{align}
r^k
\leq
q_0r^{k-2},
\qquad
q_0
:=
\frac{d_1C^2}{1+d_1C^2}
\in(0,1).
\label{rate_linear}
\end{align}
Applying Eq. (\ref{rate_linear}) separately to the even and odd subsequences, we obtain
\begin{align}
r^k
\leq
q_0^{\lfloor(k-k_2)/2\rfloor}
r^{k_2},
\quad
\forall k\geq k_2.
\notag
\end{align}
Let $q=\sqrt{q_0}$ and $c_1=r^{k_2}/q$. Since
$q_0^{\lfloor(k-k_2)/2\rfloor}
\leq
q^{-1}q^{k-k_2}$, we have
\begin{align}
r^k
\leq
c_1q^{k-k_2},
\quad
\forall k\geq k_2.
\notag
\end{align}

(\romannumeral3) Let $\theta\in(0,\frac{1}{2})$ and define
\begin{align}
\delta
:=
1-2\theta
>
0,
\qquad
a
:=
\frac{1}{d_1C^2}.
\notag
\end{align}
Eq. (\ref{eq_rate2}) can be equivalently written as
\begin{align}
r^{k-2}-r^k
\geq
a(r^k)^{1+\delta}.
\label{rate_sublinear}
\end{align}
For $\ell\in\{0,1\}$, define
\begin{align}
s_m^{(\ell)}
:=
r^{k_0+2m+\ell}.
\notag
\end{align}
From Eq. (\ref{rate_sublinear}), we have
\begin{align}
s_{m-1}^{(\ell)}-s_m^{(\ell)}
\geq
a\left(s_m^{(\ell)}\right)^{1+\delta},
\quad
\forall m\geq1.
\label{rate_sequence}
\end{align}

We consider two cases. If
$s_m^{(\ell)}\leq\frac{1}{2}s_{m-1}^{(\ell)}$, then
\begin{align}
\left(s_m^{(\ell)}\right)^{-\delta}
-
\left(s_{m-1}^{(\ell)}\right)^{-\delta}
&\geq
(2^\delta-1)
\left(s_{m-1}^{(\ell)}\right)^{-\delta}
\notag\\
&\geq
(2^\delta-1)
\left(s_0^{(\ell)}\right)^{-\delta}.
\label{rate_case1}
\end{align}
If
$s_m^{(\ell)}>\frac{1}{2}s_{m-1}^{(\ell)}$, the mean value theorem and Eq. (\ref{rate_sequence}) give
\begin{align}
\left(s_m^{(\ell)}\right)^{-\delta}
-
\left(s_{m-1}^{(\ell)}\right)^{-\delta}
&\geq
\delta
\left(s_{m-1}^{(\ell)}\right)^{-1-\delta}
\left(
s_{m-1}^{(\ell)}-s_m^{(\ell)}
\right)
\notag\\
&\geq
\delta a
\left(s_{m-1}^{(\ell)}\right)^{-1-\delta}
\left(s_m^{(\ell)}\right)^{1+\delta}
\notag\\
&\geq
\frac{\delta a}{2^{1+\delta}}.
\label{rate_case2}
\end{align}
Define
\begin{align}
c_\ell
:=
\min
\left\{
(2^\delta-1)
\left(s_0^{(\ell)}\right)^{-\delta},
\frac{\delta a}{2^{1+\delta}}
\right\}
>
0.
\notag
\end{align}
From Eq. (\ref{rate_case1}) and Eq. (\ref{rate_case2}), we obtain
\begin{align}
\left(s_m^{(\ell)}\right)^{-\delta}
\geq
\left(s_0^{(\ell)}\right)^{-\delta}
+
c_\ell m.
\notag
\end{align}
Consequently,
\begin{align}
s_m^{(\ell)}
\leq
(c_\ell m)^{-1/\delta},
\quad
\forall m\geq1.
\notag
\end{align}
Let $c_0^{\prime}=\min\{c_0,c_1\}$. Since
$\lfloor(k-k_0)/2\rfloor\geq(k-k_0)/3$ for every $k\geq k_0+2$, we obtain
\begin{align}
r^k
\leq
\left(
\frac{c_0^{\prime}}{3}
\right)^{-1/\delta}
(k-k_0)^{-1/\delta},
\quad
\forall k\geq k_0+2.
\notag
\end{align}
The result follows by setting
\begin{align}
k_3
:=
k_0+1,
\qquad
c_2
:=
\left(
\frac{c_0^{\prime}}{3}
\right)^{-\frac{1}{1-2\theta}}.
\notag
\end{align}
This completes the proof.
\end{proof}

\subsection{Iteration complexity}
As an immediate consequence of the sufficient decrease in Theorem \ref{t1} and the subgradient bound in Theorem \ref{t2}, we further establish a global finite-iteration complexity guarantee under the arbitrary inner products and general admissible metrics considered in this paper.
\begin{theorem}
(Iteration complexity) Let $\{z^k\}_{k\in\mathbb{N}}$ be the sequence generated by Algorithm \ref{iGGBPL}, and let $J^*=\lim_{k\rightarrow\infty}J(z^k)$. There exists a positive constant $C_{\mathrm{it}}$, independent of $K$, such that, for every integer $K\geq1$,
\begin{align}
\min_{1\leq k\leq K}\operatorname{dist}^{2}(0,\partial J(z^{k+1}))
\leq
\frac{C_{\mathrm{it}}(J(z^1)-J^*)}{K}.
\label{complexity_bound}
\end{align}
Thus, the minimum squared stationarity residual within the first $K$ iterations is bounded by $\mathcal{O}(K^{-1})$. Consequently, under the criterion $\operatorname{dist}^{2}(0,\partial J(z^{k+1}))\leq\varepsilon$, the iteration complexity is $\mathcal{O}(\varepsilon^{-1})$. Equivalently, under the criterion $\operatorname{dist}(0,\partial J(z^{k+1}))\leq\varepsilon$, the iteration complexity is $\mathcal{O}(\varepsilon^{-2})$. The same assertions hold for Algorithm \ref{GGBPL}.
\label{complexity}
\end{theorem}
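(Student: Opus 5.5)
The plan is to combine the sufficient decrease of Theorem \ref{t1}(i) with the subgradient bound of Theorem \ref{t2}, and then telescope. Write $A_{k}:=\sum_{i=1}^N (d_i^{k-1}(x_i^{k},x_i^{k-1}))^2$, as in the proof of Theorem \ref{rate}. The analysis there (Eq. (\ref{rate_metric}) together with Theorem \ref{t2} and Cauchy--Schwarz) gives a constant $C_b>0$ such that
\begin{align}
\operatorname{dist}^2(0,\partial J(z^{k+1}))\leq \|p_{x^{k+1}}\|^2\leq C_b(A_{k+1}+A_{k}),\notag
\end{align}
for every $k\geq1$. Two facts make $C_b$ uniform in $k$. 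First, the whole sequence $\{z^k\}$ and the extrapolated points $\{y^k\}$ lie in a fixed bounded set: $z^k$ lies in the sublevel set $\mathcal{L}(z^1)$ by Lemma \ref{t3}, and $y^k$ is a bounded combination of consecutive iterates because $\beta^k<1$. Second, Lemma \ref{metric_lipschitz} and Lemma \ref{metric_bound} then supply constants depending only on this compact set and on $\inf\sigma_i^k>0$. I would spell out that $d_i^{k}(x_i^{k+1},y_i^k)\le C_y\|x_i^{k+1}-y_i^k\|_d$ uses Lemma \ref{metric_lipschitz} with $d(y,y)=0$. The triangle inequality for $\|\cdot\|_d$ and Assumption \ref{assump2}(i) then bound this by multiples of $d_i^k(x_i^{k+1},x_i^k)$ and $d_i^{k-1}(x_i^{k},x_i^{k-1})$.

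Next, Theorem \ref{t1}(i) gives $\rho A_{k+1}\leq J(z^k)-J(z^{k+1})$ for all $k\geq1$, in both the accepted and the rejected inertial branch. Summing over $k=1,\dots,K$, and using that $J(z^k)$ decreases monotonically to $J^*$, yields
\begin{align}
\sum_{k=1}^K \operatorname{dist}^2(0,\partial J(z^{k+1}))\leq C_b\sum_{k=1}^K (A_{k+1}+A_k)\leq \frac{2C_b}{\rho}(J(z^1)-J^*)+C_bA_1.\notag
\end{align}

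The extra term $A_1$ is the small bookkeeping obstacle. By the initialization $x^1=x^0$ we have $A_1=0$, since $d_i^0(x_i^1,x_i^0)=0$. The same initialization together with $\beta^1=0$ gives $y^1=x^1$, so $d_i^1(x_i^2,y_i^1)=d_i^1(x_i^2,x_i^1)$ and the $k=1$ term is controlled by $A_2$ alone. Bounding the minimum by the average then gives (\ref{complexity_bound}) with $C_{\mathrm{it}}=2C_b/\rho$.

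The complexity statements follow directly. If $C_{\mathrm{it}}(J(z^1)-J^*)/K\le\varepsilon$, then $K=\lceil C_{\mathrm{it}}(J(z^1)-J^*)/\varepsilon\rceil$ iterations suffice for the squared criterion. Replacing $\varepsilon$ by $\varepsilon^2$ gives $\mathcal{O}(\varepsilon^{-2})$ for $\operatorname{dist}(0,\partial J(z^{k+1}))\le\varepsilon$.

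Algorithm \ref{GGBPL} is the special case $\beta^k\equiv0$, so $y^k=x^k$ and the argument is unchanged. The only point needing care is that every constant, such as $\rho_b$, $C_y$ and $C_d$, is independent of $K$. This holds because they depend only on the bounded level set $\mathcal{L}(z^1)$, on Assumption \ref{assump2}, and on the step-size bounds imposed at the start of Section \ref{sec4}.
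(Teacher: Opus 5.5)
Your proposal is correct and follows essentially the same route as the paper: you use the bound $\operatorname{dist}^2(0,\partial J(z^{k+1}))\le C_b(A_{k+1}+A_k)$ from the convergence-rate proof and the sufficient decrease $\rho A_{k+1}\le J(z^k)-J(z^{k+1})$, then telescope using $A_1=0$ (from $z^1=z^0$) and take $C_{\mathrm{it}}=2C_b/\rho$. Your extra remarks on the uniformity of $C_b$ in $k$ and on $y^1=x^1$ make explicit what the paper's proof leaves implicit.
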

\begin{proof}
Define
\begin{align}
A_k
:=
\sum_{i=1}^{N}
\left(
d_i^{k-1}(x_i^k,x_i^{k-1})
\right)^2.
\notag
\end{align}
From Eq. (\ref{rate_subgradient}), for every $k\geq1$, we have
\begin{align}
\operatorname{dist}^{2}(0,\partial J(z^{k+1}))
\leq
C_b(A_{k+1}+A_k).
\notag
\end{align}
Since $z^1=z^0$, we have $A_1=0$. Therefore, for every integer $K\geq1$,
\begin{align}
\sum_{k=1}^{K}
\operatorname{dist}^{2}(0,\partial J(z^{k+1}))
&\leq
2C_b\sum_{k=2}^{K+1}A_k
\notag\\
&\leq
\frac{2C_b}{\rho}
\left(J(z^1)-J(z^{K+1})\right)
\notag\\
&\leq
\frac{2C_b}{\rho}
\left(J(z^1)-J^*\right).
\notag
\end{align}
Dividing both sides by $K$ and setting $C_{\mathrm{it}}:=2C_b/\rho$ give Eq. (\ref{complexity_bound}).
\end{proof}

\section{Numerical experiments} 
\label{Experi}
In this section, we evaluate the performance of the proposed GGBPL and iGGBPL methods on both sparse nonnegative matrix factorization with $\ell_0$ constraints and sparse nonnegative CP decomposition with $\ell_0$ constraints problems. 
The experiments are carried out in MATLAB 2021b on a computer with an Intel(R) Core(TM) i7-10850H CPU @ 2.70GHz and 16.0 GB RAM. 
For tensor computations, we use Tensor Toolbox 3.5 \cite{bader2006algorithm}. 
Our code is publicly available at \url{https://github.com/Weifeng-Yang/GGBPL}.


\subsection{Baseline Methods}
\label{subsection: asup}

To verify the numerical effectiveness of the proposed methods, we compare them with the following methods for solving Eq. (\ref{e11}).

\begin{itemize} 
    
    \item [1)] PALM  \cite{bolte2014proximal}:  The Proximal Alternating Linearized Minimization  method. 
    
    \item [2)] BPL  \cite{xu2017globally}:  Randomized/deterministic block prox-linear  method. 

    \item [3)] IBPG  \cite{le2020inertial}:  Inertial block proximal gradient method.  

    \item [4)] TITAN  \cite{phan2023inertial}: inerTIal block majorizaTion minimizAtioN method.  

    \item [5)] ABPL  \cite{yang2023accelerated}:  Randomized/deterministic accelerated block proximal linear method with adaptive momentum.

    \item [6)] PGels \cite{yang2024proximal}: Proximal gradient method with extrapolation and line search.

    \item [7)] iPAL \cite{yang2025sparse}: inertial Proximal Alternating Linearized method.  This method provides a more flexible version of the ABPL method that simplifies the two restart steps required in ABPL to a single restart step.

\end{itemize}
All parameters of the compared methods are set according to the corresponding reference papers. 
Although several ADMM algorithms \cite{wang2019global,li2025proximal,bai2026proximal} and some nonsmooth optimization algorithms \cite{jiang2025inexact,lyaqini2024non,xiao2024adam} can also handle some multiblock nonconvex optimization problems, these problem formulations are different from Eq. (\ref{e11}), smoothing methods even only yield approximate solutions. 
Therefore, these ADMM algorithms and nonsmooth optimization algorithms are out of the scope of this paper.

\subsection{Sparse nonnegative matrix factorization with $\ell_0$-constraints ($\ell_0$-SNMF)} 
\subsubsection{$\ell_0$-SNMF model}

Nonnegative matrix factorization (NMF) and its sparse variants are popular and powerful tools for feature extraction \cite{li2025superpixel}.   
To obtain sparse and interpretable solutions, sparsity constraints are usually imposed on the factor matrices, and the $\ell_0$-norm constraint gives a more explicit sparsity description since it directly controls the number of nonzero entries \cite{weston2003use,shi2022cardinality}. 
However, optimizing the sparse NMF with $\ell_0$-norm constraints ($\ell_0$-SNMF) is known to be NP-hard, nonconvex and nonsmooth, and the factor form of NMF allows the metric used for one factor to be constructed from the other factor. 
Therefore, instead of using the standard inner product and its induced Euclidean norm, we construct a variable-geometry form of the proposed methods by introducing nonstandard inner products and metrics derived from the local geometric structure of $\ell_0$-SNMF. 
This construction enables each block update to use the local scaling information induced by the corresponding block variables, and leads to a practical convergent scheme for solving $\ell_0$-SNMF.

Mathematically, $\ell_0$-SNMF can be presented as follows. 
\begin{equation}
\begin{aligned}
&&  ~\min \limits_{U,V} &\frac{1}{2}\|X-UV\|_{F}^{2}+\frac{\alpha_1}{2}\|U\|_F^2+\frac{\alpha_2}{2}\|V\|_F^2,\\
&&& s. t. ~  U,V \geq 0,~\|U\|_{0} \leq s_{1},~\|V\|_{0} \leq s_{2},   
\end{aligned}
\label{e41}
\end{equation}
where $X \in \mathbb{R}^{m\times n}$, $U\in\mathbb{R}^{m\times r}$ and $V\in\mathbb{R}^{r\times n}$.

\subsubsection{Solving $\ell_0$-SNMF using iGGBPL}
If we write Eq. (\ref{e41}) in the form of Eq. (\ref{e11}), then we have 
\begin{align}
&&H(U,V)&=\frac{1}{2}\|X-UV\|_{F}^{2}+\frac{\alpha_1}{2}\|U\|_F^2+\frac{\alpha_2}{2}\|V\|_F^2, \notag \\
&& F_{1}(U)&=\delta_{1}(U),~  F_{2}(V)=\delta_{2}(V),\label{smNMFEq}
\end{align}
where
\begin{equation}
\delta_{i}(x)=\left\{
\begin{aligned}
0&, \quad x\geq 0,~\|x\|_{0} \leq s_{i}, \\
\infty&,\quad else.
\end{aligned}
\right. 
\label{delta}
\end{equation}
It is easy to see that $\nabla_{U} H(U,V)$ and $\nabla_{V} H(U,V)$ are 
\begin{align}
\nabla_{U} H(U,V)=(UV-X)V^{T}+\alpha_1U, \nabla_{V} H(U,V)=U^{T}(UV-X)+\alpha_2V.\notag
\end{align}
From the above formulation, it is easy to see that the local geometric structure of $H(U,V)$ with respect to $U$ is characterized by $VV^T+\alpha_1I$.
Similarly, the local geometric structure with respect to $V$ is characterized by $U^TU+\alpha_2I$.
Therefore, instead of using the standard inner product and its induced Euclidean norm, we construct metrics for block variables based on this local geometric information as follows. 
\begin{align}
M_U^k&=\operatorname{Diag}\left(V^k(V^k)^T\mathbf{1}\right)+(\alpha_1+\varepsilon)I, \notag\\
M_V^k&=\operatorname{Diag}\left((U^{k+1})^TU^{k+1}\mathbf{1}\right)+(\alpha_2+\varepsilon)I, \notag
\end{align}
where $\varepsilon>0$ and $\mathbf{1}$ denotes the all-one vector with a proper dimension. 
Then the nonstandard inner products and their induced metrics are defined as
\begin{align}
&&&\langle A,B\rangle_U^k=\operatorname{tr}(A M_U^k B^T),~\langle A,B\rangle_V^k=\operatorname{tr}(A^T M_V^k B), \notag\\
&&&d_U^k(A,B)=\|A-B\|_{M_U^k}=\sqrt{\operatorname{tr}((A-B)M_U^k(A-B)^T)}, \notag\\
&&&d_V^k(A,B)=\|A-B\|_{M_V^k}=\sqrt{\operatorname{tr}((A-B)^TM_V^k(A-B))}. \notag
\end{align}
Since $M_U^k$ and $M_V^k$ are derived from the local geometric structures characterized by $V^k(V^k)^T+\alpha_1I$ and $(U^{k+1})^TU^{k+1}+\alpha_2I$, the block update scheme of the proposed methods can utilize the local scaling information of the corresponding subproblem of the $\ell_0$-SNMF model.

Moreover,  the Frobenius norm is obviously a KŁ function, Eq. (\ref{delta}) is a proper and lower semicontinuous KŁ function \cite{attouch2013convergence}. Thus, Eq. (\ref{smNMFEq}) satisfies Assumption \ref{assump1}, $d_U^k(A,B)$ and $d_V^k(A,B)$ also obviously satisfy Assumption \ref{assump2}. 
By applying the generalized geometry proximal linearized operator (Eq. (\ref{gprox})) to Eq. (\ref{e41}), the update scheme of the proposed methods for solving the $\ell_0$-SNMF can be expressed as follows. 
\begin{align} 
 && U^{k+1} &\in Gprox_{\sigma_{1}^{k} F_{1}}^{d_U^k}\left(Y_U^k;\nabla_UH(Y_U^k,V^k)(M_U^k)^{-1}\right) \notag \\
& & &=\operatorname*{\arg\min} \limits_{A}
 \left\{\|A-B_U^k\|_{M_U^k}^{2}: A\geq 0,~ \|A\|_{0}\leq s_{1}\right\}, \notag  \\
 && V^{k+1} &\in Gprox_{\sigma_{2}^{k} F_{2}}^{d_V^k}\left(Y_V^k;(M_V^k)^{-1}\nabla_VH(U^{k+1},Y_V^k)\right) \notag \\
&& &= \operatorname*{\arg\min} \limits_{A}
 \left\{\|A-B_V^k\|_{M_V^k}^{2}: A\geq 0,~ \|A\|_{0}\leq s_{2}\right\}, \notag  
\end{align}
where $\sigma_{1}^{k}=\frac{1}{\gamma_{1}^{k}},~
\sigma_{2}^{k}=\frac{1}{\gamma_{2}^{k}},~
\gamma_{1}^{k}>1,~\gamma_{2}^{k}>1$ and 
\begin{align}
&& B_U^k&=Y_U^k-\sigma_1^k
\nabla_UH(Y_U^k,V^k)(M_U^k)^{-1},\\
&& B_V^k&=Y_V^k-\sigma_2^k(M_V^k)^{-1}\nabla_VH(U^{k+1},Y_V^k),\notag 
\end{align}


\begin{figure}[!h]
    \centering 
    \includegraphics[width=0.48\linewidth]{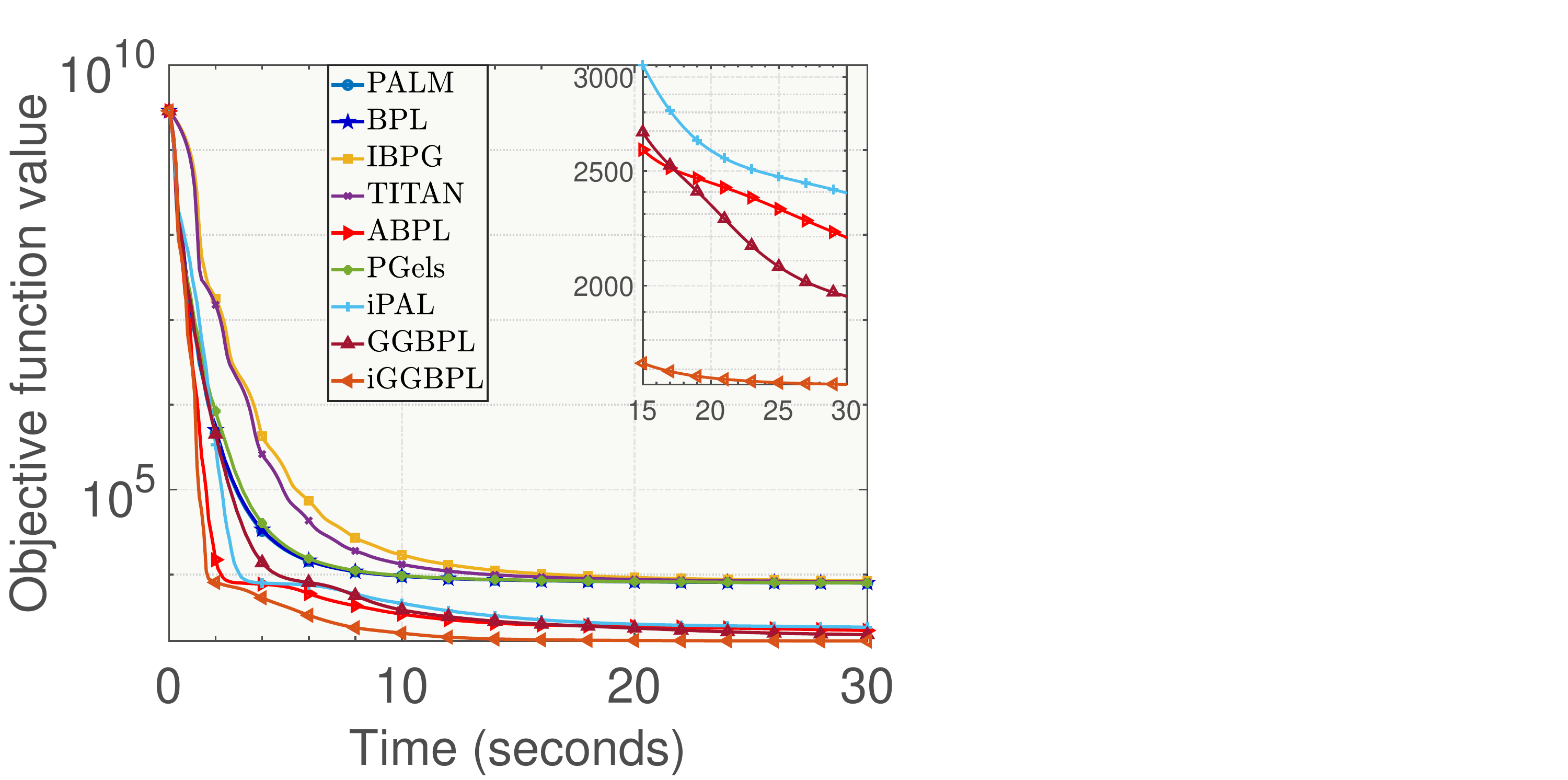}
    \includegraphics[width=0.48\linewidth]{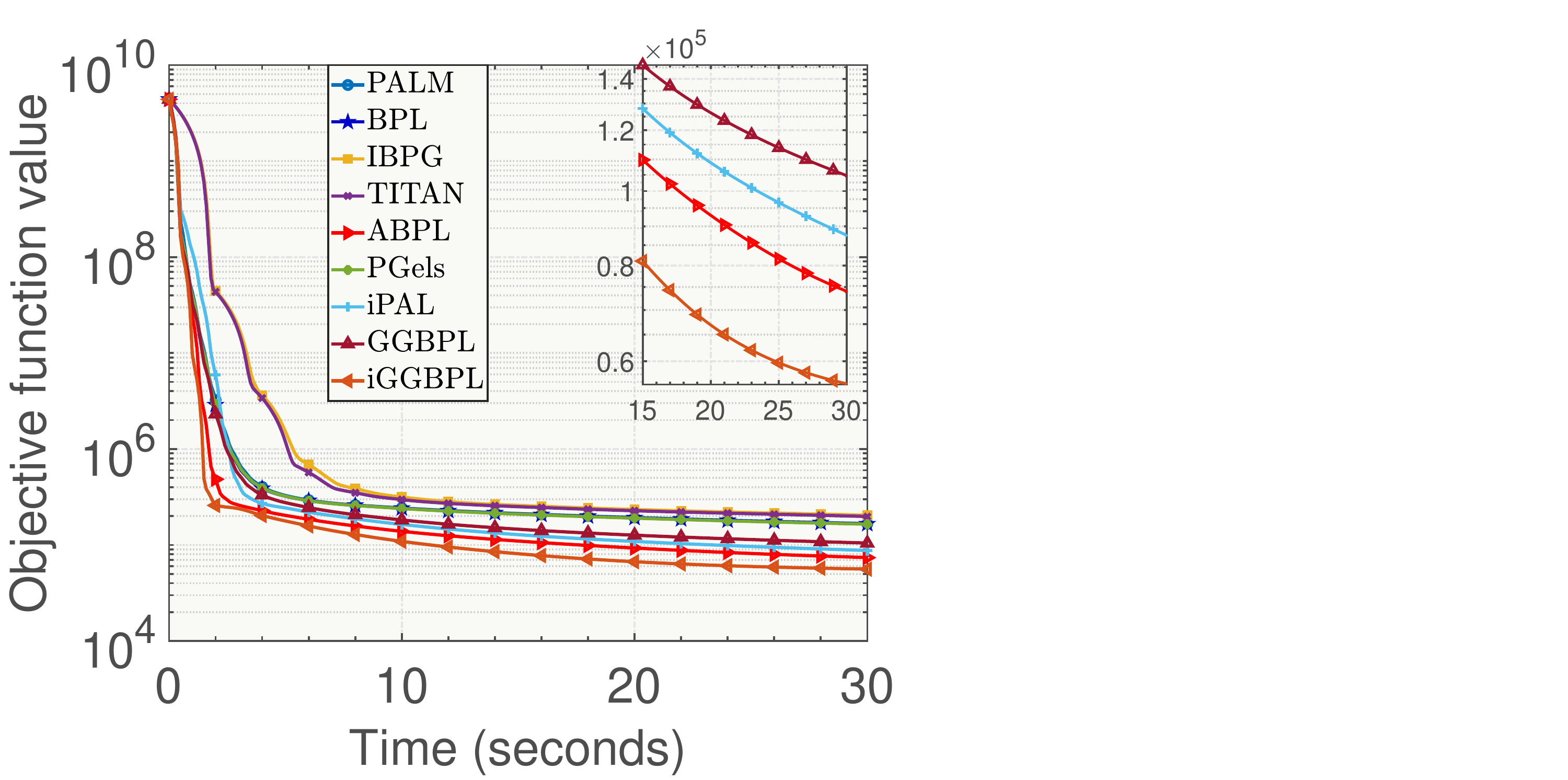}
    
    \caption{
    Average convergence behavior on the lp\_ship12l (left) and  BASEHOCK (right) datasets. The inset enlarges the final stage objective function values of the four best-performing methods. 
    }\label{figmat}
    
\end{figure}

\begin{figure*}[!h]
    \centering 
    \includegraphics[width=0.44\linewidth]{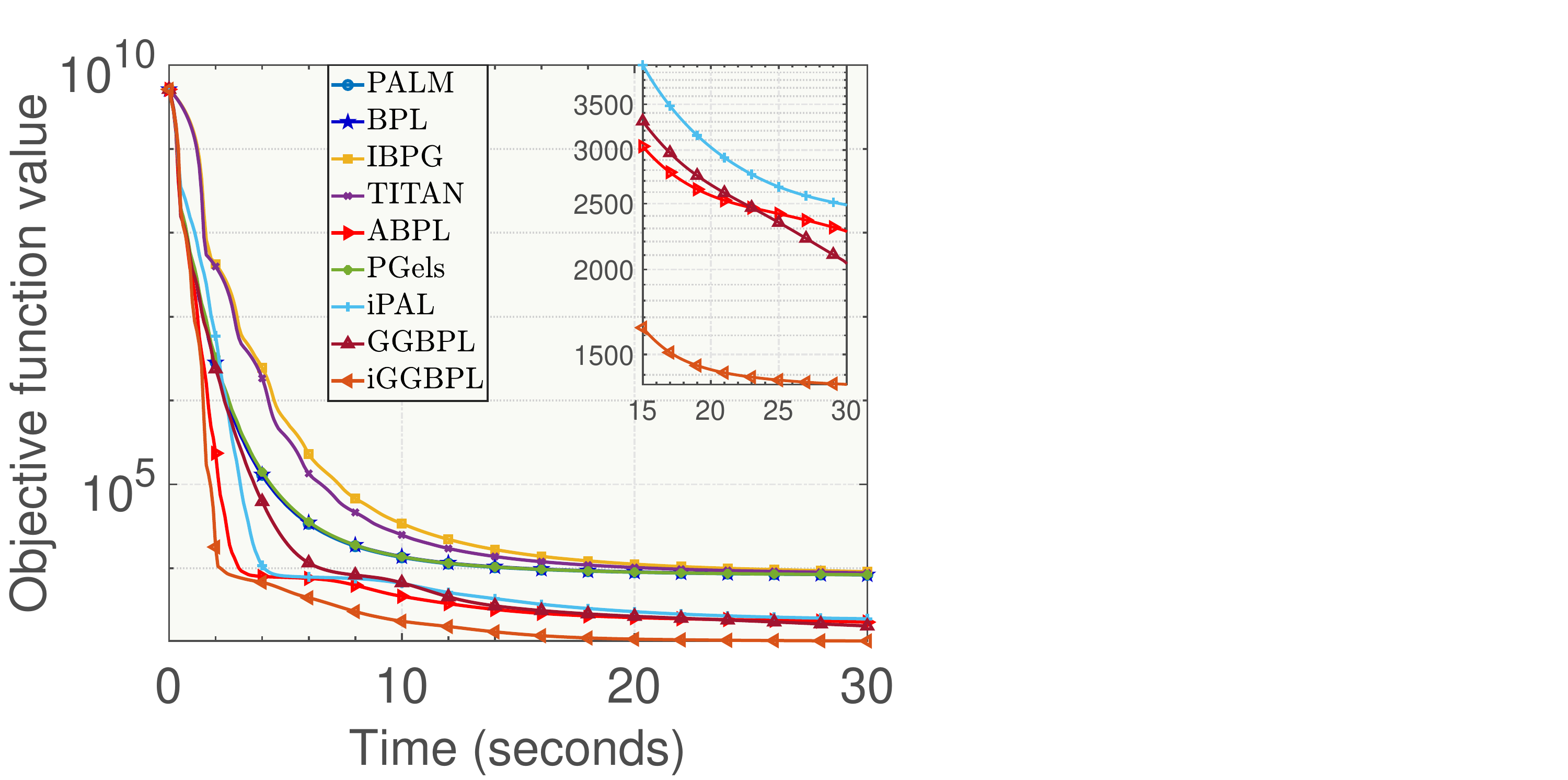}
    \includegraphics[width=0.44\linewidth]{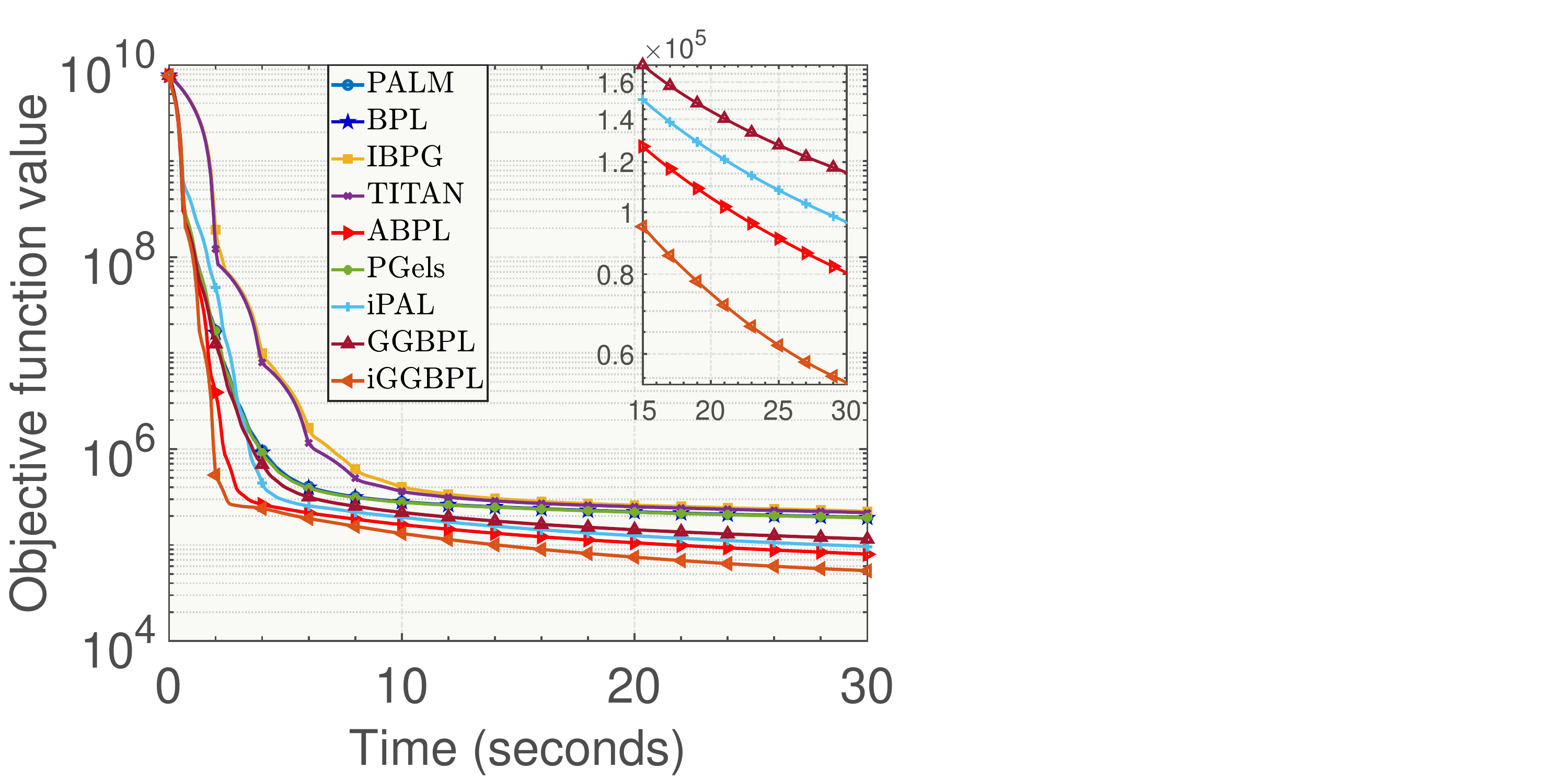}
    \caption{
    Average convergence behavior on the lp\_ship12l (left) and  BASEHOCK (right) datasets with $r=400$. The inset enlarges the final stage objective function values of the four best-performing methods. 
    }\label{figmat1}
\end{figure*}

\begin{figure*}[!h]
    \centering 
    \includegraphics[width=0.44\linewidth]{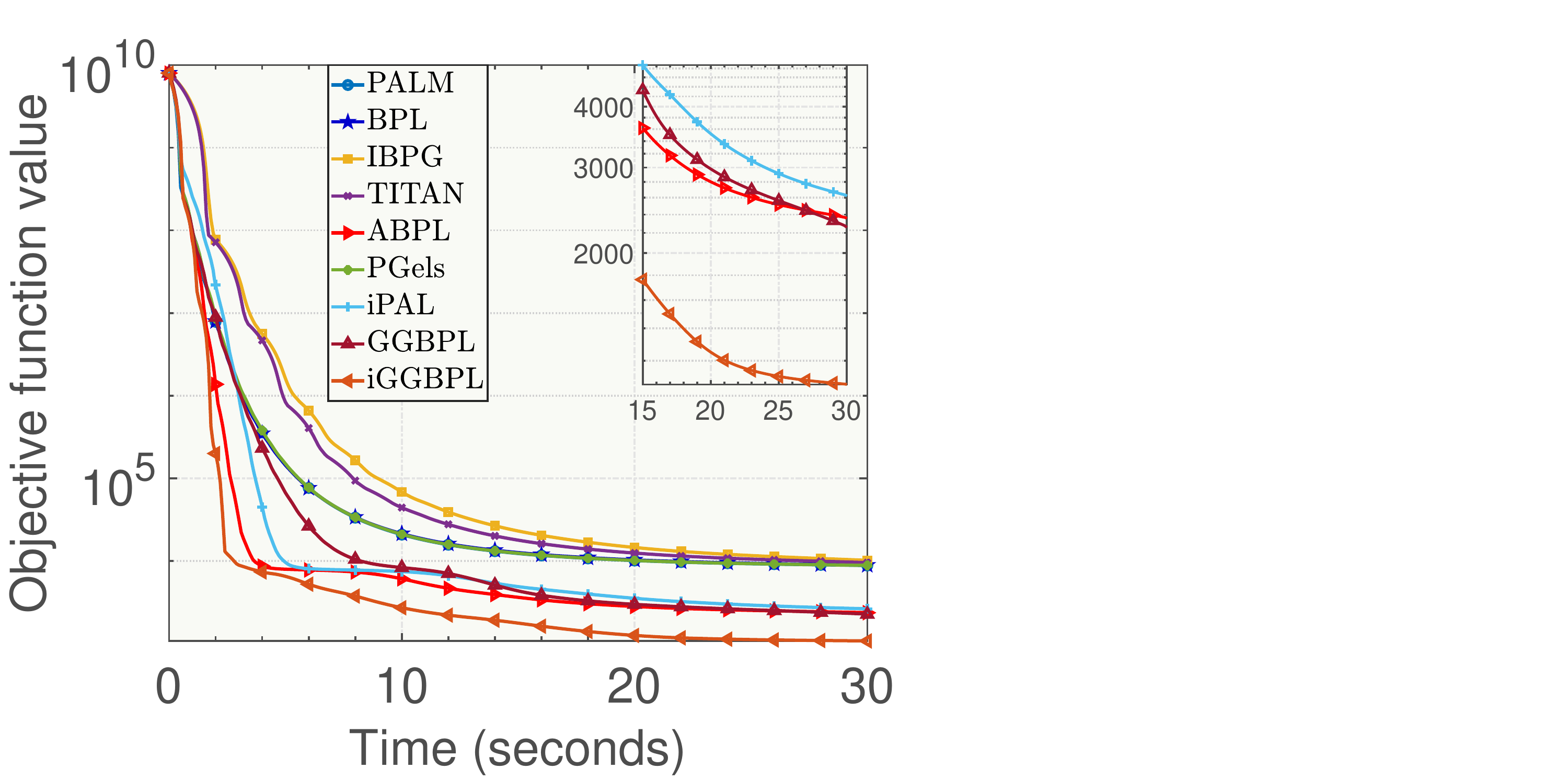}
    \includegraphics[width=0.44\linewidth]{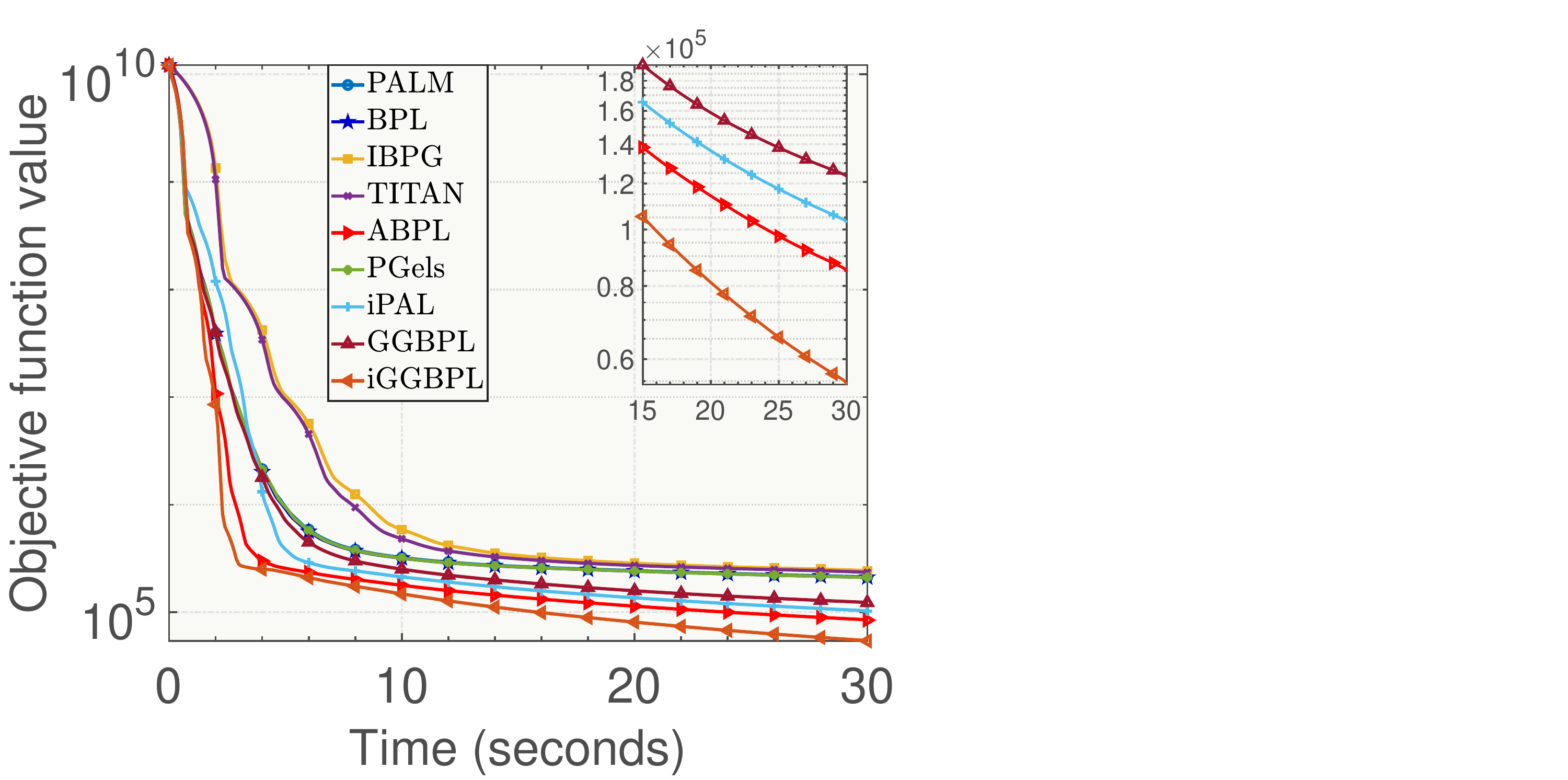}
    \caption{
    Average convergence behavior on the lp\_ship12l (left) and BASEHOCK (right) datasets with $r=500$. The inset enlarges the final stage objective function values of the four best-performing methods. 
    }\label{figmat2}
\end{figure*}


\begin{table*}[!ht]
\centering

\caption{Average and standard deviation results on the matrix datasets. The best performance is highlighted in bold.}
\centering
\label{tabmat1}
\label{tabmat}
\small
\setlength{\tabcolsep}{3pt}
\begin{tabular*}{\linewidth}{|p{1.65cm}|*{3}{>{\centering\arraybackslash}p{\dimexpr(\linewidth-4.65cm-14\tabcolsep-5\arrayrulewidth)/3\relax}>{\centering\arraybackslash}p{1cm}|}}
\hline    
\diagbox[innerwidth=\linewidth]{\multirow{2}{*}{Method}}{Rank} &  \multicolumn{2}{c|}{$r=300$} & \multicolumn{2}{c|}{$r=400$}        &   \multicolumn{2}{c|}{$r=500$}    \\ \hhline{|~|--|--|--|}
   & Rel  & $Wins$   & Rel  & $Wins$ & Rel & $Wins$ \\
\hline

    PALM   & 4.531 $\pm$ 0.031 & 0 & 4.883 $\pm$ 0.040 &0
    & 5.128 $\pm$ 0.046 &0 \\
    BPL   & 4.517 $\pm$ 0.035 & 0 & 4.865 $\pm$ 0.041 &0
    & 5.105 $\pm$ 0.040 &0 \\
    IBPG  & 5.009 $\pm$ 0.051 & 0 & 5.247 $\pm$ 0.07&0
    & 5.468 $\pm$ 0.059 &0 \\
    TITAN  & 4.931 $\pm$ 0.045 & 0 & 5.151 $\pm$ 0.044 &0
    & 5.378 $\pm$ 0.052 &0 \\
    ABPL   & 3.016 $\pm$ 0.020 & 0 & 3.141 $\pm$ 0.028&0
    & 3.233 $\pm$ 0.031 &0 \\
    PGels & 4.503 $\pm$ 0.026 & 0 & 4.858 $\pm$ 0.027 &0
    & 5.105 $\pm$ 0.036 &0 \\
    iPAL  & 3.281 $\pm$ 0.027 & 0 & 3.440 $\pm$ 0.025 &0
    & 3.562 $\pm$ 0.053 &0 \\
    GGBPL & 3.589 $\pm$ 0.037 & 0 & 3.764 $\pm$ 0.03 &0
    & 3.901 $\pm$ 0.037 &0 \\

iGGBPL & $\textbf{2.770}$ $\pmb{\pm}$  $\textbf{0.012}$ & $\textbf{20}$ & $\textbf{2.713}$ $\pmb{\pm}$  $\textbf{0.004}$ & $\textbf{20}$
& $\textbf{2.729}$ $\pmb{\pm}$  $\textbf{0.043}$ & $\textbf{20}$       \\ \hline
  
\end{tabular*}

  \subcaption{\centering Average results by methods on the lp\_ship12l dataset. }

\begin{tabular*}{\linewidth}{|p{1.65cm}|*{3}{>{\centering\arraybackslash}p{\dimexpr(\linewidth-4.65cm-14\tabcolsep-5\arrayrulewidth)/3\relax}>{\centering\arraybackslash}p{1cm}|}}
\hline    
\diagbox[innerwidth=\linewidth]{\multirow{2}{*}{Method}}{Rank} &  \multicolumn{2}{c|}{$r=300$} & \multicolumn{2}{c|}{$r=400$}        &   \multicolumn{2}{c|}{$r=500$}    \\ \hhline{|~|--|--|--|}
   & Rel  & $Wins$   & Rel  & $Wins$ & Rel & $Wins$ \\
\hline

    PALM   & 0.713 $\pm$ 0.005 & 0 & 0.768 $\pm$ 0.006 & 0
    & 0.807 $\pm$ 0.007 & 0\\
    BPL    & 0.711 $\pm$ 0.006 & 0 & 0.766 $\pm$ 0.007 & 0
    & 0.803 $\pm$ 0.006 & 0\\
    IBPG   & 0.788 $\pm$ 0.008 & 0 & 0.826 $\pm$ 0.011 & 0
    & 0.860 $\pm$ 0.009 & 0\\
    TITAN & 0.776 $\pm$ 0.007 & 0 & 0.811 $\pm$ 0.007 & 0
    & 0.846 $\pm$ 0.008 & 0\\
    ABPL   & 0.475 $\pm$ 0.003 & 0 & 0.494 $\pm$ 0.004 & 0
    & 0.509 $\pm$ 0.005 & 0\\
    PGels  & 0.709 $\pm$ 0.004 & 0 & 0.764 $\pm$ 0.004 & 0
    & 0.803 $\pm$ 0.006 & 0\\
    iPAL   & 0.516 $\pm$ 0.004 & 0 & 0.541 $\pm$ 0.004 & 0
    & 0.560 $\pm$ 0.008 & 0\\
    GGBPL  & 0.565 $\pm$ 0.005 & 0 & 0.592 $\pm$ 0.005 & 0
    & 0.614 $\pm$ 0.006& 0\\

iGGBPL & $\textbf{0.436}$ $\pmb{\pm}$  $\textbf{0.002}$ & $\textbf{20}$ & $\textbf{0.427}$ $\pmb{\pm}$  $\textbf{0.004}$ & $\textbf{20}$
& $\textbf{0.429}$ $\pmb{\pm}$  $\textbf{0.007}$ & $\textbf{20}$       \\ \hline
  
\end{tabular*}

  \subcaption{\centering Average results by methods on the BASEHOCK dataset. }

\end{table*}

\subsubsection{Numerical results}
\label{NMFNum}
In this experiment, we test the methods on the dataset from  SuiteSparse\footnote{\url{https://sparse.tamu.edu/}} \cite{10.1145/2049662.2049663}:  lp\_ship12l ($\mathbb{R}^{1151\times5533}$), and we also test the methods on the BASEHOCK\footnote{\url{https://jundongl.github.io/scikit-feature/datasets.html}} dataset ($\mathbb{R}^{1993\times4862}$) \cite{lang1995learning}.

For parameter settings, we set the initial parameters as  
$\gamma^{k}_{i}=1.01, ~\rho_1=\varepsilon=10^{-10}, ~\alpha_1=\alpha_2=1$, 
and the number of non-zero elements in each matrix cannot exceed $30\%$ of the total number of elements. The initial matrix is generated by the uniform distribution. For evaluation metrics, we define relative error (Rel) as Rel$=\frac{\|X-UV\|_{F}}{\|X\|_{F}}$. $Wins$ is defined as the number of times that the corresponding method obtains the lowest relative error among all methods across all independent runs. We adopt these metrics to quantitatively describe the numerical performance of all methods.

We define the maximum running time as $t_{max}$ (s) and set $t_{max}=30$ in this experiment. 
Table \ref{tabmat} reports the average results over 20 independent runs with $r$ varying among $\{300,400,500\}$ on these datasets. Figure \ref{figmat}, Figure \ref{figmat1} and Figure \ref{figmat2} record the evolution of the average objective function value over 20 independent runs with respect to time for $r=300$, $r=400$ and $r=500$, respectively. 
From Table \ref{tabmat}, Figure \ref{figmat}, Figure \ref{figmat1} and Figure \ref{figmat2}, we observe that GGBPL consistently outperforms PALM  and several state-of-the-art inertial methods, while iGGBPL achieves the best overall performance among all compared methods by a clear margin on all datasets.  
These results demonstrate that the proposed generalized geometry operator, constructed using arbitrary inner products and general admissible metrics beyond the standard Euclidean geometry,  enables the block updates to capture the local geometry, scaling, and coupling structures of the corresponding subproblems, while iGGBPL utilizes an inertial term to further enhance the convergence performance of GGBPL, thereby improving the numerical efficiency of the proposed methods.


\subsection{Sparse nonnegative CP decomposition with $\ell_0$-constraints ($\ell_0$-SNCP)}
\subsubsection{$\ell_0$-SNCP model}
Nonnegative CP decomposition (NCP) is widely used to extract latent features from nonnegative multi-way data \cite{chen2022unsupervised}. 
To obtain sparse and interpretable factor matrices, the $\ell_0$-norm constraint can be imposed on the factor matrices since it directly controls the number of nonzero entries. 
However, solving sparse NCP with $\ell_0$-norm constraints ($\ell_0$-SNCP) is also NP-hard, nonsmooth, and nonconvex. 
Additionally, for this problem, the multilinear structure of the CP model provides useful local geometric information of corresponding block variables. 
Therefore, we construct a variable-geometry form of the proposed methods for $\ell_0$-SNCP by introducing nonstandard inner products and metrics derived from the multilinear structure of the $\ell_0$-SNCP model.

The $\ell_0$-SNCP model can be presented as follows. 
\begin{align}
&& &\min\limits_{\left\{A_{i}\right\}_{i=1}^{N}}\frac{1}{2}\|\mathcal{X}-{\llbracket {\left\{A_{i}\right\}_{i=1}^{N}}  
 \rrbracket }\|_{F}^{2}+\sum_{i=1}^N\frac{\alpha_i}{2}\|A_{i}\|_F^2,  \notag \\
&& &s. t.~ A_{i} \geq 0, ~ \|A_{i}\|_{0} \leq s_{i}, 
\label{e43}
\end{align}
where $\mathcal X\in\mathbb R^{d_1\times d_2\times\cdots\times d_N}$ and $A_i\in\mathbb R^{d_i\times R}$, $ \llbracket \ \rrbracket $ represents Kruskal operator, $\odot$ represents the Khatri-Rao product.

\subsubsection{Solving $\ell_0$-SNCP using iGGBPL}
Similarly, if we write Eq. (\ref{e43}) in the form of Eq. (\ref{e11}), then we have 
\begin{align}
    && H(\left\{{A_{i}} \right\}^{N}_{i=1})&=\frac{1}{2}\|\mathcal{X}-{\llbracket {\left\{A_{i}\right\}_{i=1}^{N}} \rrbracket }\|_{F}^{2}+\sum_{i=1}^N\frac{\alpha_i}{2}\|A_i\|_F^2, ~F_{i}(A_{i})=\delta_i(A_{i}).
    \label{SNCPEq}
\end{align}
The definition of $\delta_i(A_{i})$ is the same as that in Eq. (\ref{delta}).

Let $X^{(n)} \in  \mathbb{R}^{d_{n} \times \prod_{i=1, i \neq n}^{N} d_{i}}$ represent the mode-n unfolding of the original tensor $\mathcal{X}$, the mode-n unfolding of the   $ \llbracket {\left\{A_{i}\right\}_{i=1}^{N}} \rrbracket $ can be written as $A_{n}B_{n}^{T}$, where $B_{n}= \prod_{j=1,j\neq n}^{N}\odot A_{j},$.
Therefore, Eq. (\ref{e43}) can be decomposed into some sub-problems, so the $\nabla_{A_{i}}H(\left\{{A_{i}} \right\}^{N}_{i=1})$  is 
\begin{align}
&& \nabla_{A_{i}} H(\left\{{A_{i}} \right\}^{N}_{i=1})&=A_{i}(B_{i})^{T}(B_{i})-X^{(i)}B_{i}+\alpha_iA_i. \label{sncpgrad}
 \end{align}
From the above formulation, it is also easy to see that the local geometric structure of $H(\left\{{A_{i}} \right\}^{N}_{i=1})$ with respect to $A_i$ is characterized by $(B_i)^TB_i+\alpha_iI$, which is induced by the multilinear structure of the CP model and the quadratic regularization term.
Therefore, instead of using the standard inner product and its induced Euclidean norm, we construct the metrics for block variables based on this local geometric information as follows. 
\begin{align}
M_i^k&=\operatorname{Diag}\left((B_i^k)^TB_i^k\mathbf{1}\right)+(\alpha_i+\varepsilon)I, \notag
\end{align}
where $\varepsilon>0$, $\mathbf{1}$ also denotes the all-one vector with a proper dimension. Then the nonstandard inner product and its induced metric are defined as 
\begin{align}
&&&\langle A,B\rangle_i^k=\operatorname{tr}(A M_i^k B^T), \notag\\
&&&d_i^k(A,B)=\|A-B\|_{M_i^k}
=\sqrt{\operatorname{tr}((A-B)M_i^k(A-B)^T)}. \notag
\end{align}
Since $M_i^k$ is derived from the local geometric structure characterized by $(B_i^k)^TB_i^k+\alpha_iI$, the block update scheme of the proposed methods can utilize the local scaling information of the corresponding subproblem of the $\ell_0$-SNCP model.

As a result, Eq. (\ref{SNCPEq}) also satisfies Assumption \ref{assump1}, $d_i^k(A,B)$ also obviously satisfies Assumption \ref{assump2}. By applying the generalized geometry proximal linearized operator (Eq. (\ref{gprox})) to Eq. (\ref{e43}), the update scheme of the proposed methods for solving $\ell_0$-SNCP can be expressed as follows.  
\begin{align}
 && A_{i}^{k+1} &\in Gprox_{\sigma_{i}^{k} F_{i}}^{d_i^k}
 \left(y_i^k;\nabla_{A_i} h_i(y_i^k)(M_i^k)^{-1}\right) \notag \\
 && &= \operatorname*{\arg\min} \limits_{A}
 \left\{\|A-U^{k}_{i}\|_{M_i^k}^{2}: A\geq 0,~\|A\|_{0}\le s_{i}\right\},  \notag
\end{align}
where $\sigma_{i}^{k}=\frac{1}{\gamma_{i}^{k}},~ \gamma^{k}_{i}>1$ and 
\begin{align}
&& & U^{k}_{i}=y_i^k-\sigma_{i}^{k}\nabla_{A_i} h_i(y_i^k)(M_i^k)^{-1},\notag
\end{align}


\subsubsection{Numerical results}
We test the methods on the BreastMNIST\footnote{\url{https://github.com/MedMNIST/MedMNIST}}, and microPNW\footnote{\url{https://github.com/niyiyu/PNW-ML}} datasets. 
The BreastMNIST dataset is a three-dimensional tensor  ($\mathbb{R}^{780\times224\times224}$).  
For the microPNW dataset, each sample in the microPNW dataset is transformed into the form of time frames $\times$ frequency bins $\times$ channels, thus the microPNW dataset is a four-dimensional tensor ($\mathbb{R}^{100\times150\times3\times50}$), and we also normalize the value of each feature in the microPNW dataset to the range of 0 to 1.

We set $\alpha_i=1~(\forall i\in [N])$, all other parameter settings are the same as those in the previous experiment. 
We also define relative error (Rel) as Rel$=\frac{\|\mathcal{X}-{\llbracket  \{ A_{i}\}_{i=1}^N  \rrbracket }\|_{F}}{\|\mathcal{X}\|_{F}}$. 
The definition of $Wins$ is the same as that in the previous experiment.

We set $t_{max}=40$ in this experiment. 
Table \ref{tabten} reports the average results over 20 independent runs with $R$ varying among $\{50,60,70\}$ on these datasets. Figure \ref{figten}, Figure \ref{figten1} and Figure \ref{figten2} record the evolution of the average objective function value over 20 independent runs with respect to time for $R=50$, $R=60$ and $R=70$, respectively. 
From Table \ref{tabten}, Figure \ref{figten}, Figure \ref{figten1} and Figure \ref{figten2}, GGBPL again outperforms PALM and several state-of-the-art inertial methods, while iGGBPL also achieves the best overall performance among all compared methods by a clear margin on all datasets. Together with the results for $\ell_0$-SNMF, these experimental results demonstrate that the proposed generalized geometry operator is not restricted to matrix factorization but remains numerically effective for higher-order tensor decomposition, where the corresponding block subproblems exhibit more complex local geometry, scaling, and coupling structures, thereby further confirming the effectiveness of the proposed methods. 
Combined with the results for $\ell_0$-SNMF, these results demonstrate that the numerical advantages of the proposed generalized geometry operator are not limited to two-block matrix factorization problems, but extend to higher-order multiblock tensor decomposition problems with more complex multilinear coupling structures. This further demonstrates the adaptability and scalability of the proposed method across different problem structures and CP ranks.

\begin{table*}[!ht]

\centering

\caption{Average and standard deviation results on the tensor datasets. The best performance is highlighted in bold.}
\centering
\label{tabten1}
\label{tabten}
\small
\setlength{\tabcolsep}{3pt}
\begin{tabular*}{\linewidth}{|p{1.65cm}|*{3}{>{\centering\arraybackslash}p{\dimexpr(\linewidth-4.65cm-14\tabcolsep-5\arrayrulewidth)/3\relax}>{\centering\arraybackslash}p{1cm}|}}
\hline    
\diagbox[innerwidth=\linewidth]{\multirow{2}{*}{Method}}{Rank} &  \multicolumn{2}{c|}{$R=50$} & \multicolumn{2}{c|}{$R=60$}     &   \multicolumn{2}{c|}{$R=70$}    \\ \hhline{|~|--|--|--|}
   & Rel  & $Wins$   & Rel  & $Wins$ & Rel & $Wins$ \\
\hline

        PALM   & 0.316 $\pm$ 0.006 & 0 & 0.311 $\pm$ 0.004 &0
        & 0.312 $\pm$ 0.005 &0\\
    BPL    & 0.313 $\pm$ 0.012 & 0 & 0.310 $\pm$ 0.010 &0
    & 0.313 $\pm$ 0.007 &0\\
    IBPG   & 0.301 $\pm$ 0.011 & 0 & 0.297 $\pm$ 0.009 &0
    & 0.296 $\pm$ 0.010 &0\\
    TITAN  & 0.298 $\pm$ 0.012 & 0 & 0.294 $\pm$ 0.010 &0
    & 0.292 $\pm$ 0.007 &0\\
    ABPL   & 0.290 $\pm$ 0.011 & 0 & 0.286 $\pm$ 0.005 &0
    & 0.284 $\pm$ 0.011 &0\\
    PGels  & 0.315 $\pm$ 0.006 & 0 & 0.310 $\pm$ 0.004 &0
    & 0.311 $\pm$ 0.005 &0\\
    iPAL   & 0.279 $\pm$ 0.003 & 0 & 0.273 $\pm$ 0.002 &0
    & 0.269 $\pm$ 0.002 &0\\
    GGBPL & 0.286 $\pm$ 0.003 & 0 & 0.280 $\pm$ 0.003 &0
    & 0.277 $\pm$ 0.002 &0\\

iGGBPL & $\textbf{0.253}$ $\pmb{\pm}$  $\textbf{0.001}$ & $\textbf{20}$ & $\textbf{0.246}$ $\pmb{\pm}$  $\textbf{0.001}$ & $\textbf{20}$
& $\textbf{0.240}$ $\pmb{\pm}$  $\textbf{0.001}$ & $\textbf{20}$   \\ \hline

\end{tabular*}
  \subcaption{\centering Average results by methods on the BreastMNIST dataset. }

\begin{tabular*}{\linewidth}{|p{1.65cm}|*{3}{>{\centering\arraybackslash}p{\dimexpr(\linewidth-4.65cm-14\tabcolsep-5\arrayrulewidth)/3\relax}>{\centering\arraybackslash}p{1cm}|}}
\hline    
\diagbox[innerwidth=\linewidth]{\multirow{2}{*}{Method}}{Rank} &  \multicolumn{2}{c|}{$R=50$} & \multicolumn{2}{c|}{$R=60$}     &   \multicolumn{2}{c|}{$R=70$}    \\ \hhline{|~|--|--|--|}
   & Rel  & $Wins$  & Rel  & $Wins$ & Rel & $Wins$ \\
\hline

    PALM   & 0.107 $\pm$ 0.004 & 0 & 0.108 $\pm$ 0.003 & 0
    & 0.107 $\pm$ 0.006 & 0\\
    BPL    & 0.111 $\pm$ 0.006 & 0 & 0.109 $\pm$ 0.005 & 0
    & 0.107 $\pm$ 0.007 & 0\\
    IBPG   & 0.104 $\pm$ 0.005 & 0 & 0.103 $\pm$ 0.004 & 0
    & 0.102 $\pm$ 0.007 & 0\\
    TITAN  & 0.103 $\pm$ 0.004 & 0 & 0.101 $\pm$ 0.003& 0
    &0.101 $\pm$ 0.005 & 0\\
    ABPL   & 0.095 $\pm$ 0.001 & 0 & 0.092 $\pm$ 0.001 & 0
    & 0.089 $\pm$ 0.001 & 0\\
    PGels  & 0.107 $\pm$ 0.004 & 0 & 0.107 $\pm$ 0.004 & 0
    & 0.107 $\pm$ 0.006 & 0\\
    iPAL   & 0.096 $\pm$ 0.001 & 0 & 0.092 $\pm$ 0.001 & 0
    & 0.089 $\pm$ 0.001 & 0\\
    GGBPL  & 0.095 $\pm$ 0.001 & 0 & 0.092 $\pm$ 0.002 & 0
    & 0.090 $\pm$ 0.002 & 0\\

iGGBPL & $\textbf{0.086}$ $\pmb{\pm}$  $\textbf{0.001}$ & $\textbf{20}$ & $\textbf{0.086}$ $\pmb{\pm}$  $\textbf{0.001}$ & $\textbf{20}$
& $\textbf{0.083}$ $\pmb{\pm}$  $\textbf{0.004}$ & $\textbf{20}$   \\ \hline

\end{tabular*}
  \subcaption{\centering Average results by methods on the microPNW dataset. }



\end{table*}


\begin{figure}[!h]
    \centering 
    \includegraphics[width=0.48\linewidth]{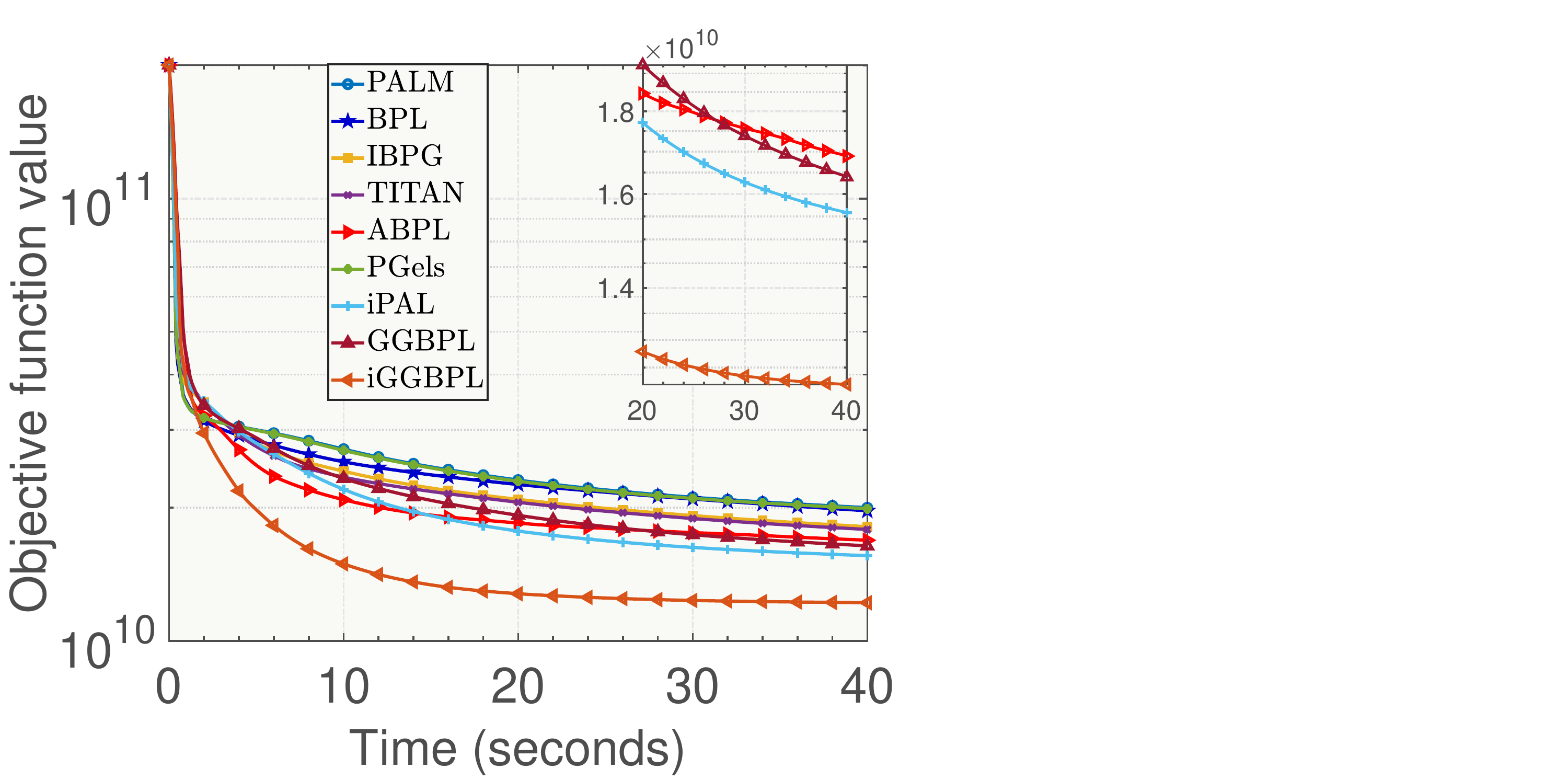}
    \includegraphics[width=0.48\linewidth]{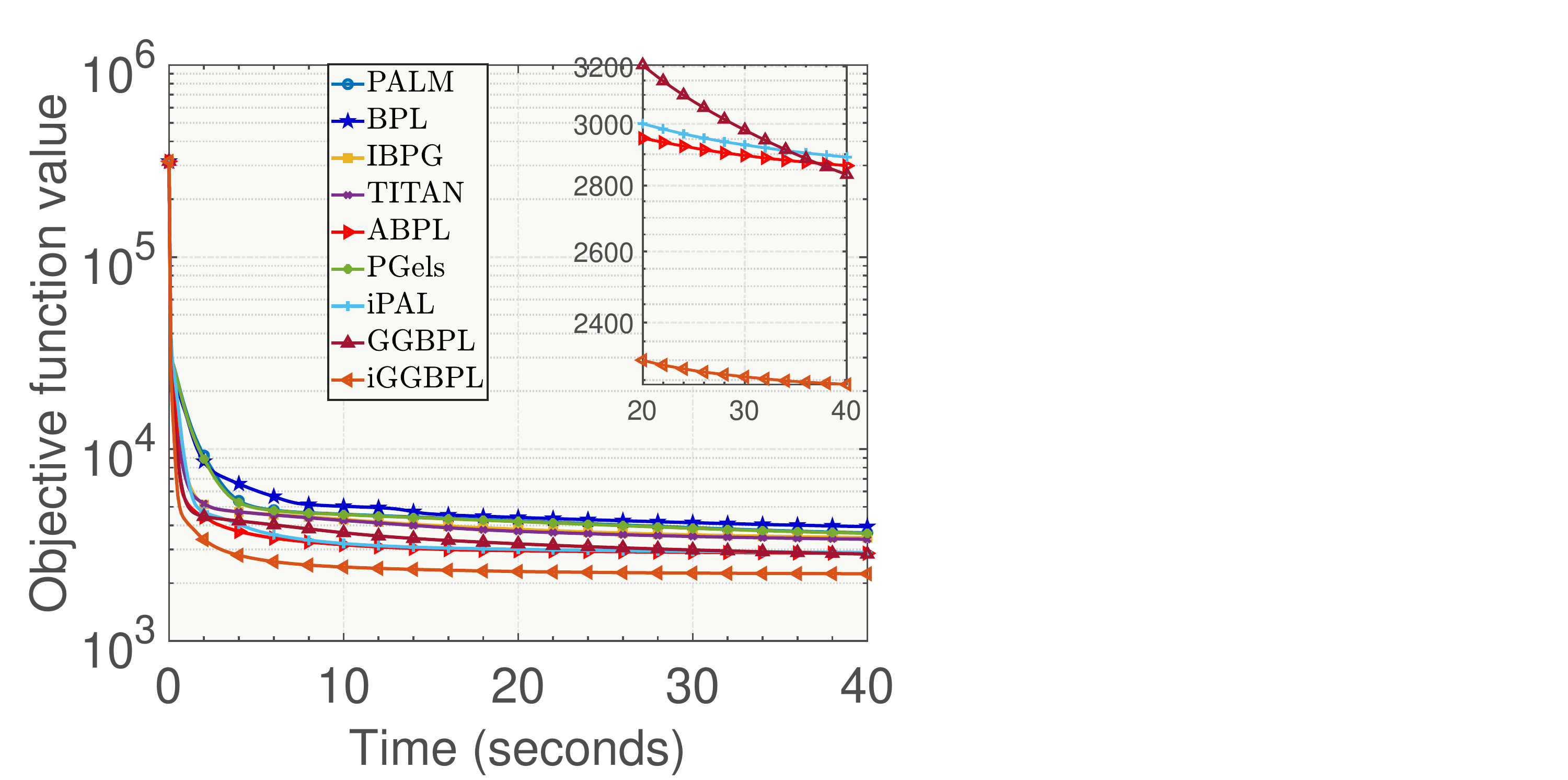}
    
    \caption{
    Average convergence behavior on the BreastMNIST (left) and microPNW (right) datasets. The inset enlarges the final stage objective function values of the four best-performing methods.  
    }\label{figten}
    
\end{figure}

\begin{figure*}[!h]
    \centering 
    \includegraphics[width=0.44\linewidth]{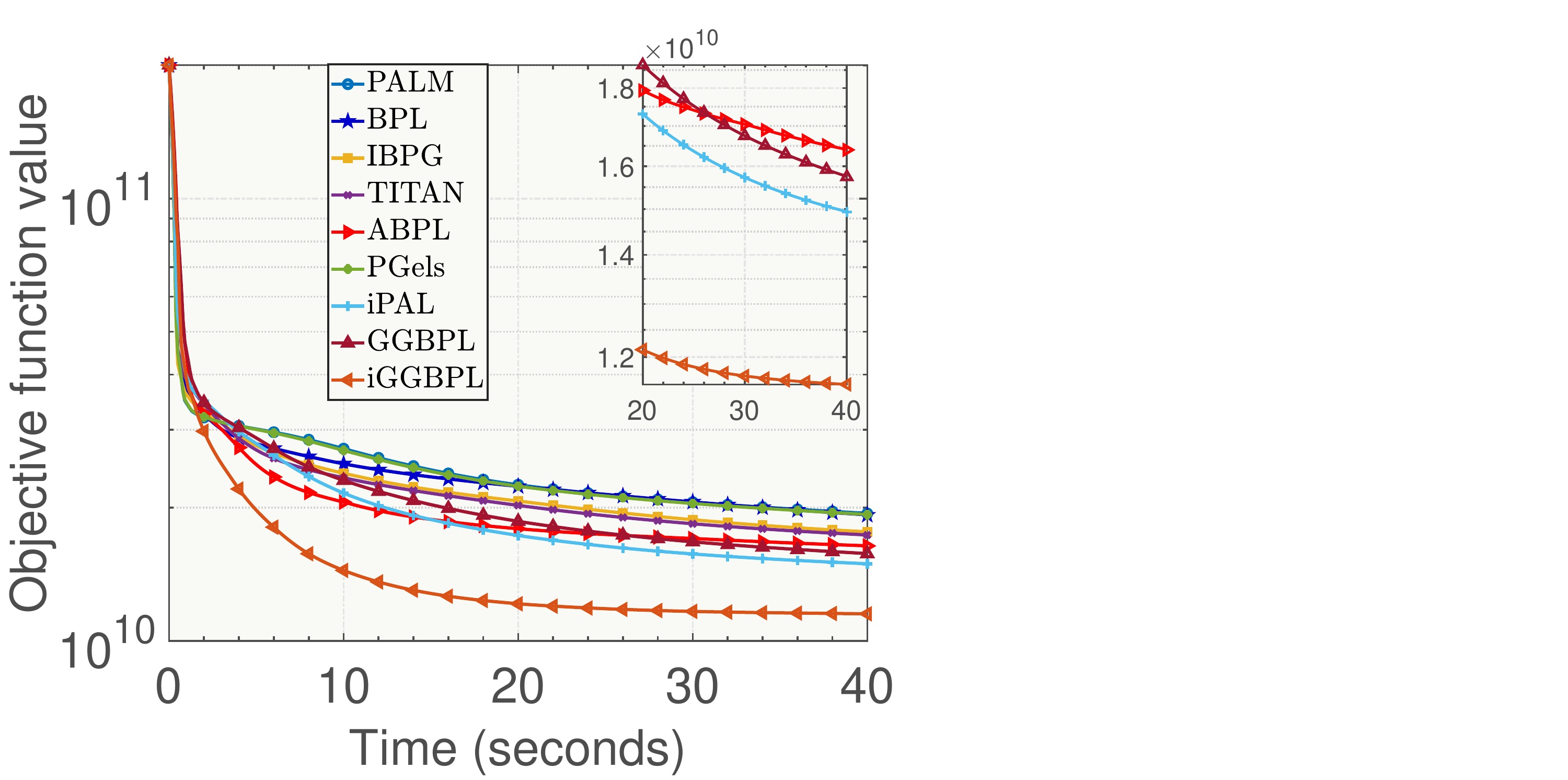}
    \includegraphics[width=0.44\linewidth]{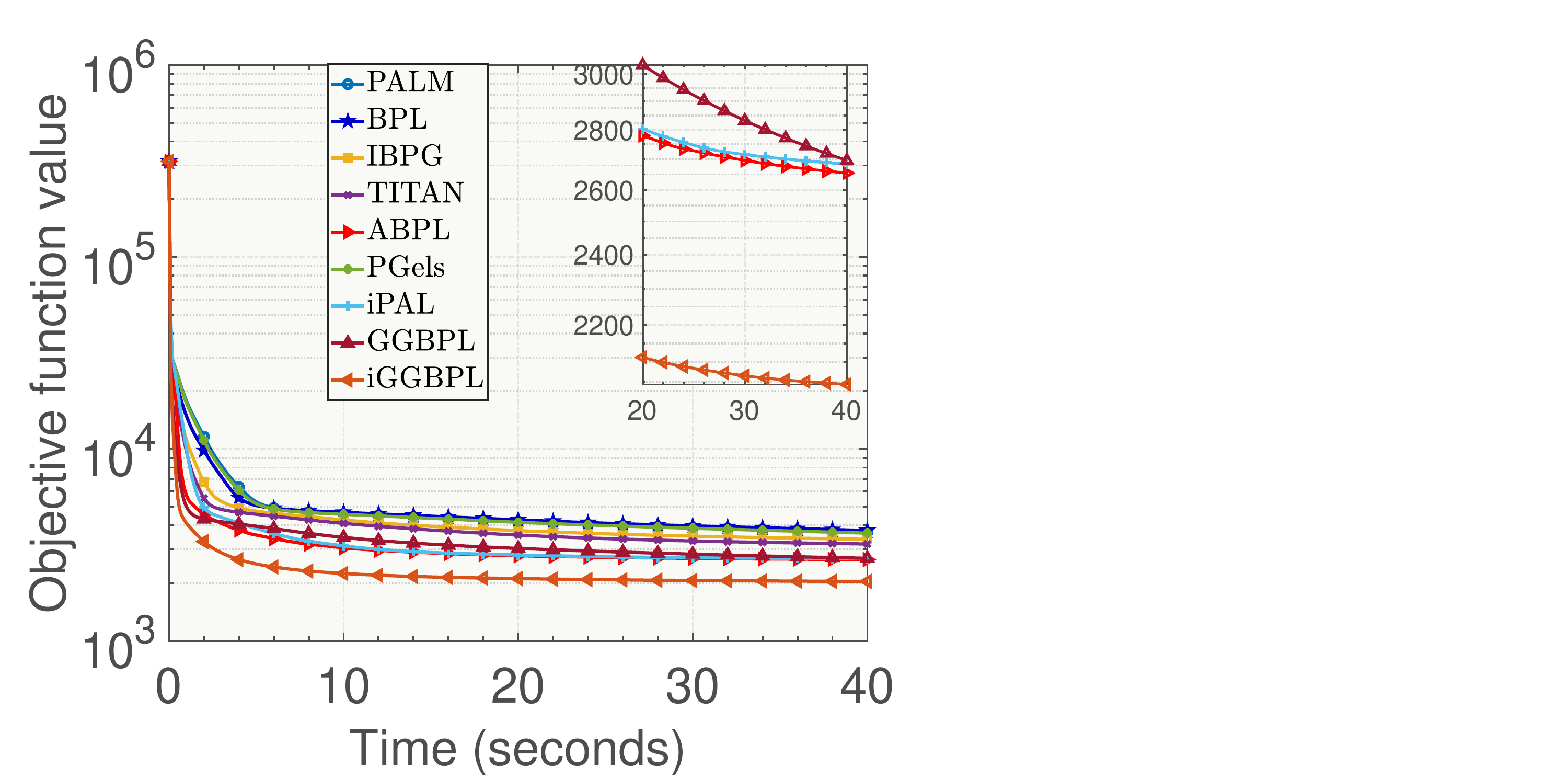}
    \caption{
    Average convergence behavior on the BreastMNIST (left) and microPNW (right) datasets with $R=60$. The inset enlarges the final stage objective function values of the four best-performing methods. 
    }\label{figten1}

\end{figure*}
\begin{figure*}[!h]
    \centering 
    \includegraphics[width=0.44\linewidth]{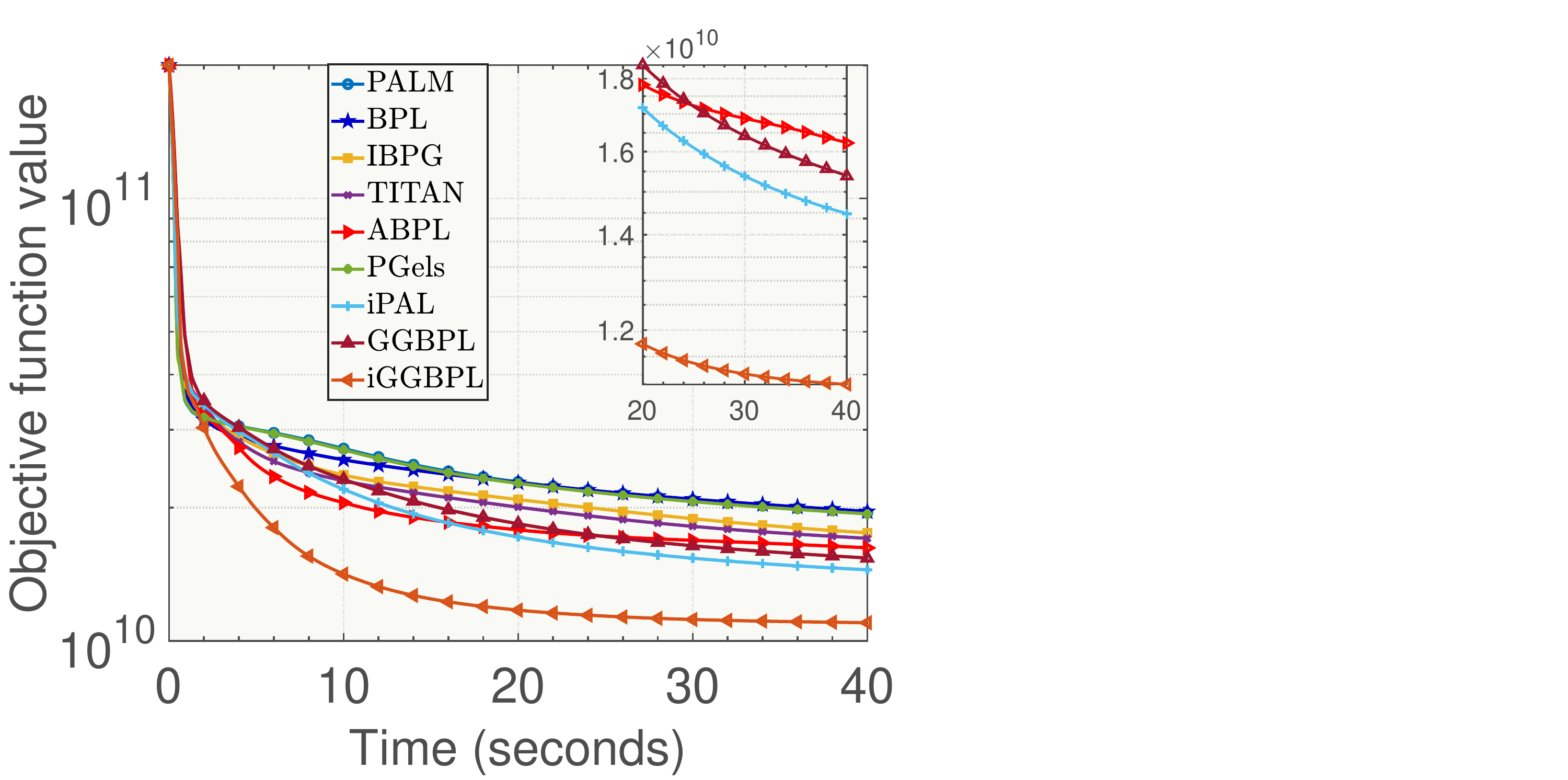}
    \includegraphics[width=0.44\linewidth]{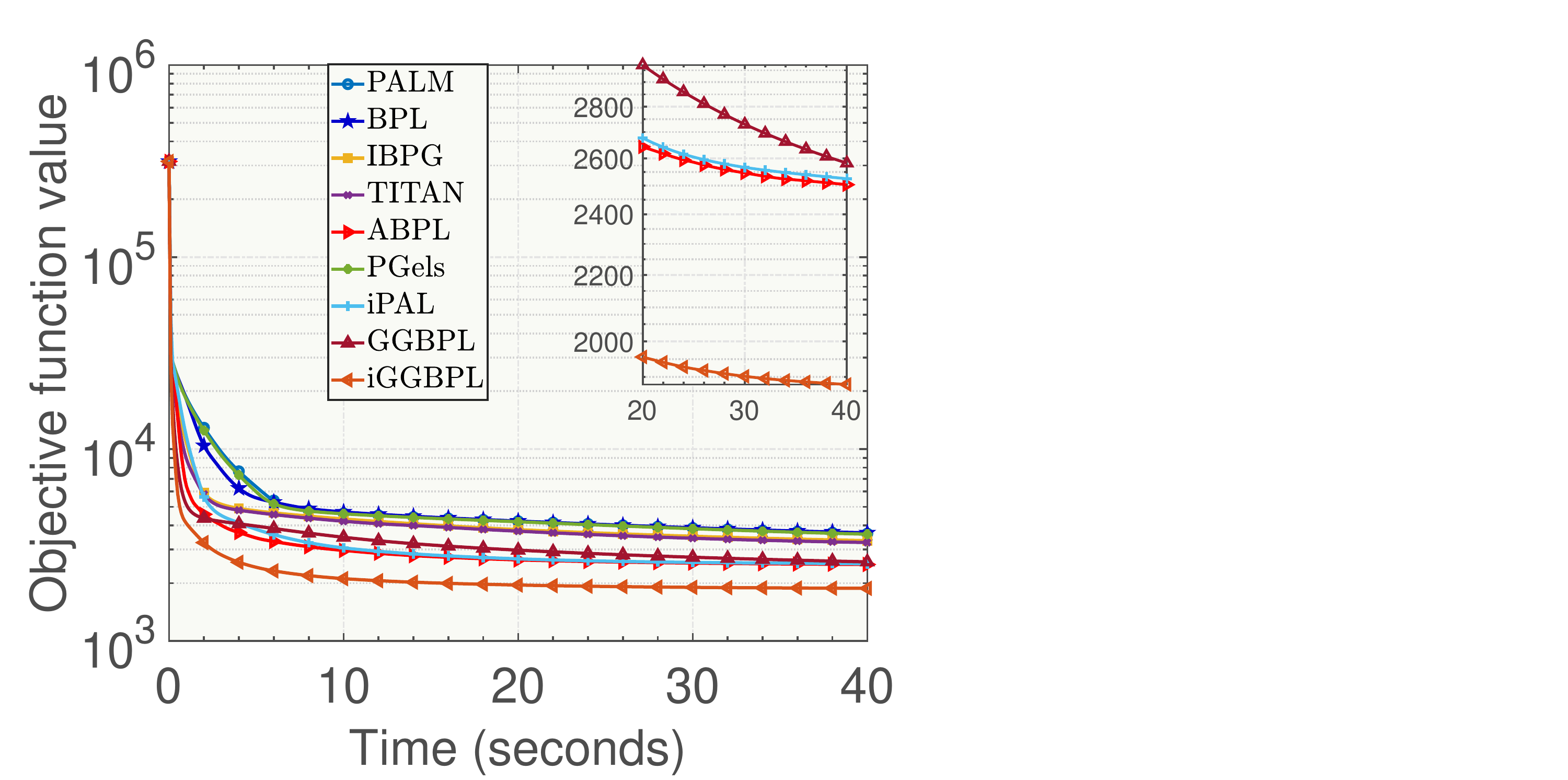}
    \caption{
    Average convergence behavior on the BreastMNIST (left) and microPNW (right) datasets with $R=70$. The inset enlarges the final stage objective function values of the four best-performing methods. 
    }\label{figten2}

\end{figure*}

\section{Conclusion}
\label{conclu}

In this paper, we proposed a generalized geometry proximal linearized operator and developed the Generalized Geometry Block Proximal Linearized (GGBPL) method based on this operator. 
The proposed operator allows the block surrogate functions to be constructed using arbitrary inner products and general admissible metrics, rather than being restricted to the standard Euclidean geometry. We also introduced the inertial version of GGBPL named iGGBPL to accelerate convergence. 
This design enables the block updates of the proposed methods to utilize the local geometric information of various target problems and yields practical convergent schemes for directly solving these problems, thereby improving the flexibility and applicability of the proposed methods. 
We further established a unified convergence framework under this generalized geometry, within which we proved that our methods guarantee convergence for this class of problems, established the global convergence of the generated sequence to a critical point, and derived the convergence rate. We also established an $\mathcal{O}(\varepsilon^{-2})$ iteration complexity bound for obtaining an $\varepsilon$-stationary point, providing a finite-iteration guarantee under this generalized geometry. 
We applied our proposed methods to solve two nonconvex and nonsmooth problems: $\ell_0$-SNMF and $\ell_0$-SNCP. 
Numerical results demonstrated the superior numerical performance of our proposed methods over several state-of-the-art methods.

\section*{Statements and Declarations}

\bmhead{Funding}
This work was supported by the CEA Youth Key Project on Earthquake Information (No.~CEAITNS202607), the Spark Program of Earthquake Sciences of China Earthquake Administration (XH25033YB), and the Earthquake Science Technology Innovation Team Project of Yunnan Province (CXTD202507).

\bmhead{Competing interests}
The author declares no competing interests. 

\bmhead{Data availability}
The datasets used in this study are publicly available: lp\_ship12l from the SuiteSparse Matrix Collection (\url{https://sparse.tamu.edu/}), BASEHOCK from the scikit-feature dataset repository (\url{https://jundongl.github.io/scikit-feature/datasets.html}), BreastMNIST from MedMNIST (\url{https://github.com/MedMNIST/MedMNIST}), and microPNW from PNW-ML (\url{https://github.com/niyiyu/PNW-ML}).

\bibliography{references}

\end{document}